\definecolor{dark-gray}{gray}{0.3}
\definecolor{dkgray}{rgb}{.4,.4,.4}
\definecolor{dkblue}{rgb}{0,0,.5}
\definecolor{medblue}{rgb}{0,0,.75}
\definecolor{rust}{rgb}{0.5,0.1,0.1}
\numberwithin{equation}{section} 
\providecommand{\mathbold}[1]{\bm{#1}}  
\newtheorem{bigthm}{Theorem}
\newtheorem{theorem}{Theorem}[section]
\newtheorem{lemma}[theorem]{Lemma}
\newtheorem{proposition}[theorem]{Proposition}
\newtheorem{corollary}[theorem]{Corollary}
\theoremstyle{definition}
\newtheorem{example}[theorem]{Example}
\newtheorem{remark}[theorem]{Remark}
\renewcommand{\phi}{\varphi}
\newcommand{\e}{\varepsilon}
\renewcommand{\mid}{\mathrel{\mathop{:}}} 
\newcommand{\zerovct}{\vct{0}} 
\newcommand{\Id}{\mathbf{I}}
\newcommand{\onemtx}{\boldsymbol{1}}
\providecommand{\mathbbm}{\mathbb} 
\newcommand{\R}{\mathbbm{R}}
\newcommand{\polar}{\circ}
\newcommand{\diff}[1]{\mathrm{d}{#1}}
\newcommand{\minimize}{\text{minimize}\quad}
\newcommand{\subjto}{\quad\text{subject to}\quad}
\newcommand{\argmin}{\operatorname*{arg\; min}}
\newcommand{\Prob}{\mathbbm{P}}
\newcommand{\Expect}{\operatorname{\mathbb{E}}}
\newcommand{\diag}{\operatorname{diag}}
\newcommand{\vct}[1]{\mathbold{#1}}
\newcommand{\mtx}[1]{\mathbold{#1}}
\newcommand{\transp}[1]{#1^{T}}
\newcommand{\Proj}{\ensuremath{\mtx{\Pi}}} 
\newcommand{\ip}[2]{\langle {#1}, {#2} \rangle}
\newcommand{\norm}[1]{\Vert {#1}\Vert}
\newcommand{\lone}[1]{\norm{#1}_{1}}
\DeclareMathOperator{\dist}{dist}
\newcommand{\Desc}{\mathcal{D}}
\newcommand{\sdim}{\delta}
\newcommand{\IR}{\mathbbm{R}}
\newcommand{\veps}{\varepsilon}
\newcommand{\relint}{\operatorname{relint}}
\newcommand{\cone}{\operatorname{cone}}
\newcommand{\inter}{{\operatorname{int}}}
\newcommand{\conv}{\operatorname{conv}}
\newcommand{\D}{\mathcal{D}}
\renewcommand{\P}{\mathcal{P}}
\DeclareMathOperator{\ima}{im}
\newcommand{\Ren}{\mathcal{R}}
\newcommand{\resTMP}[2]{#1\to#2}
\newcommand{\nres}[3]{\|\vct{#1}\|_{\resTMP{#2}{#3}}}
\newcommand{\sres}[3]{\sigma_{\resTMP{#2}{#3}}(\vct{#1})}
\newcommand{\nrest}[3]{\|\vct{#1}^T\|_{\resTMP{#2}{#3}}}
\newcommand{\srest}[3]{\sigma_{\resTMP{#2}{#3}}(\vct{#1}^T)}
\newcommand{\srestm}[3]{\sigma_{\resTMP{#2}{#3}}(-\vct{#1}^T)}
\newcommand{\nresdag}[3]{\|\vct{#1}^\dagger\|_{\resTMP{#2}{#3}}}
\newcommand{\RCD}[3]{\Ren_{#2,#3}(\vct{#1})}
\newcommand{\DA}{\vct{\Delta A}}
\begin{document}
%
\title{Effective condition number bounds for convex regularization}
%
%
%

\author{Dennis Amelunxen,
        Martin Lotz
        and~Jake Walvin
\thanks{D. Amelunxen was with the Department
of Mathematics, City University of Hong Kong, Tat Chee Avenue, 
Kowloon Tong, Hong Kong}
\thanks{M. Lotz is with the Mathematics Institute, Zeeman Building, 
University of Warwick, Coventry CV4 7AL, U.~K.}
\thanks{J. Walvin was with the School of Mathematics, 
Alan Turing Building, Univeristy of Manchester, Manchester M13 9PL, U.~K.}
\thanks{Manuscript received May 17, 2018; revised September 27, 2019.}}

%
%

\markboth{IEEE Transactions on Information Theory,~Vol.~xx, No.~x, September~2019}%
{Amelunxen \MakeLowercase{\textit{et al.}}: Effective condition number bounds for convex regularization}
%



\maketitle

\begin{abstract}
We derive bounds relating Renegar's condition number to quantities that govern the statistical performance of convex regularization in settings that include the $\ell_1$-analysis setting. 
Using results from conic integral geometry, we show that the bounds can be made to depend only on a random projection, or restriction, of the analysis operator to a lower dimensional space, and
can still be effective if these operators are ill-conditioned. As an application, we get new bounds for the undersampling phase transition of composite convex regularizers. 
Key tools in the analysis are Slepian's inequality and the kinematic formula from integral geometry. 
\end{abstract}

\begin{IEEEkeywords}
Convex regularization, compressed sensing, integral geometry, convex optimization, dimension reduction
\end{IEEEkeywords}

%
\IEEEpeerreviewmaketitle

\section{Introduction}\label{sec:intro}
A well-established approach to solving linear inverse problems with missing information is by means of convex regularization.
In one of its manifestations, this approach amounts to solving 
the minimization problem
\begin{equation}\label{eq:conv-constr}
 \minimize f(\vct{x}) \ \subjto \norm{\mtx{\Omega}\vct{x}-\vct{b}}_2\leq \e,
\end{equation}
where $\mtx{\Omega}\in \R^{m\times n}$ represents an underdetermined linear operator and $f(\vct{x})$ is a suitable proper convex function, informed by the application at hand. The typical example is $f(\vct{x})=\lone{\vct{x}}$, known to promote sparsity, but many other functions have been considered in different settings.

While there are countless algorithms and heuristics to compute or approximate solutions of~\eqref{eq:conv-constr} and related problems, the more fundamental question is: when does a solution of~\eqref{eq:conv-constr} actually ``make sense''? The latter is important because one is usually not interested in a solution of~\eqref{eq:conv-constr} per se, but often uses this and related formulations as a proxy for a different, much more intractable problem. The best-known example is the use of the $1$-norm to obtain a sparse solution~\cite{FR:13}, but other popular settings are the total variation norm and its variants for signals with sparse gradient, or the nuclear norm of a matrix when aiming at a low-rank solution. 

Regularizers often take the form $f(\vct{x})=g(\mtx{D}\vct{x})$ for a linear map $\mtx{D}$, as in the cosparse recovery setting~\cite{elad2007analysis,candes2011compressed,NDEG:13}, where $f(\vct{x})=\norm{\mtx{D}\vct{x}}_1$ for an analysis operator $\mtx{D}\in \R^{p\times n}$ with possibly $p\geq n$. In this article we present general bounds relating the performance of~\eqref{eq:conv-constr} to properties of $g$ and the conditioning of $\mtx{D}$. Moreover, we show that for the analysis we can replace $\mtx{D}$ with a {\em random projection} applied to $\mtx{D}$, where the target dimension of this projection is independent of the ambient dimension $n$ and only depends on intrinsic properties of the regularizer $g$.

\subsection{Performance measures for convex regularization}
Various parameters have emerged in the study of the performance of problems such as~\eqref{eq:conv-constr}. Two of the most fundamental ones depend on the {\em descent cone} $\Desc(f,\vct{x}_0)$ of the function $f$ at $\vct{x}_0$, defined as the convex cone of all directions in which $f$ decreases. These parameters are
\begin{itemize}
\item the statistical dimension $\delta(f,\vct{x}_0):=\delta(\Desc(f,\vct{x}_0))$, or equivalently the squared Gaussian width, of the descent cone $\Desc(f,\vct{x}_0)$ of $f$ at a solution $\vct{x}_0$ (cone of direction from $\vct{x}_0$ in which $f$ decreases), which  determines the admissible amount of undersampling $m$ in~\eqref{eq:conv-constr} in the noiseless case ($\e=0$), in order to uniquely recover a solution $\vct{x}_0$\footnote{Strictly speaking, this is a result for {\em random} measurement matrices and holds with high probability.};
\item Renegar's condition number $\Ren_C(\mtx{\Omega})$ of $\mtx{\Omega}$ with respect to the descent cone $C=\Desc(f,\vct{x}_0)$ of $f$ at a point $\vct{x}_0$, which bounds the recovery error $\norm{\vct{x}-\vct{x}_0}_2$ of a solution $\vct{x}$ of~\eqref{eq:conv-constr}.
\end{itemize}

Before stating the results linking these two parameters, we briefly define them and outline their significance.
The statistical dimension of a convex cone is defined as the expected squared length of the projection of a Gaussian vector $\vct{g}$ onto a cone: $\delta(C)=\Expect[\norm{\Proj_C(\vct{g})}^2]$ (see Section~\ref{sec:stat-dim} for a principled derivation; unless otherwise stated, $\norm{\cdot}$ refers to the $2$-norm). It has featured as a proxy to the squared Gaussian width in~\cite{stojnic10,CRPW:12} and as the main parameter determining phase transitions in convex optimization~\cite{edge}. 
More precisely, let $\vct{x}_0 \in \R^n$, $\mtx{\Omega} \in \R^{m \times n}$ and $\vct{b} = \mtx{A}\vct{x}_0$.  Consider the optimization problem
\begin{equation}\label{eq:lin-inv-gen}
 \minimize f(\vct{x}) \subjto \mtx{\Omega}\vct{x}=\vct{b},
\end{equation}
which we deem to {\em succeed} if the solution coincides with $\vct{x}_0$.
In~\cite[Theorem II]{edge} it was shown that for any $\eta\in (0,1)$, when $\mtx{\Omega}$ has Gaussian entries, then
\begin{align*}
m &\geq \delta(f, \vct{x}_0) + a_{\eta} \sqrt{n}\\
&\Longrightarrow\quad
\text{\eqref{eq:lin-inv-gen} succeeds with probability~$\geq 1-\eta$}; \\
m &\leq \delta(f, \vct{x}_0) - a_{\eta} \sqrt{n}\\
&\Longrightarrow\quad
\text{\eqref{eq:lin-inv-gen} succeeds with probability~$\leq \eta$,}
\end{align*}
with $a_{\eta} := 4\sqrt{\log(4/\eta)}$. For $f(\vct{x})=\|\vct{x}\|_1$, the relative statistical dimension has been determined precisely by Stojnic~\cite{stojnic10}, and his results match previous derivations by Donoho and Tanner (see~\cite{dota:09a} and the references).
In addition, the statistical dimension / squared Gaussian width also features in the error analysis of the generalized LASSO problem~\cite{oymak2013squared}, as the minimax mean squared error (MSE) of proximal denoising~\cite{DJM:13, oymak2016sharp}, to study computational and statistical tradeoffs in regularization~\cite{chandrasekaran2013computational}, and in the context of structured regression~(\cite{han2017isotonic} and references). 

To define Renegar's condition number, first recall the classical condition number of a matrix $\vct A\in\IR^{m\times n}$, defined as the ratio of the 
operator norm and the smallest singular value. Using the notation
$\norm{\vct A} := \max_{\vct x\in S^{n-1}} \norm{\vct{Ax}}$, $\sigma(\vct A) := \min_{\vct x\in S^{n-1}} \norm{\vct{Ax}}$,
the classical condition number is given by
  \[ \kappa(\vct A) = \min\Bigg\{ \frac{\norm{\vct A}}{\sigma(\vct A)},\frac{\norm{\vct A}}{\sigma(\vct A^T)}\Bigg\} . \]
Renegar's condition number arises when replacing the source and target vector spaces $\R^{n}$ and $\R^{m}$ with convex cones.
Let $C\subseteq\IR^n$, $D\subseteq\IR^m$ be closed convex cones, and let $\vct A\in\IR^{m\times n}$. 
Define restricted versions of the 
norm and the singular value:
\begin{align}\label{eq:def-nres,sres}
   \nres{A}{C}{D} & := \max_{\vct x\in C\cap S^{n-1}} \|\Proj_D(\mtx{A}\vct{x})\| , \\
   \sres{A}{C}{D} & := \min_{\vct x\in C\cap S^{n-1}} \|\Proj_D(\mtx{A}\vct x)\| ,
\end{align}
where $\Proj_D\colon\IR^m\to D$ denotes the orthogonal projection, i.e., $\Proj_D(\vct y) = \argmin\{ \|\vct y-\vct z\|\mid \vct z\in D\}$.

Renegar's condition number is defined as
\begin{equation}\label{eq:renegar-def}
  \Ren_C(\mtx{A}) := \min\Bigg\{\frac{\|\vct A\|}{\sres{A}{C}{\R^m}},\frac{\|\vct A\|}{\srestm{A}{\R^m}{C}}\Bigg\}.
\end{equation}
In what follows, we simply write $\sigma_C(\mtx{A}):=\sres{A}{C}{\R^m}$ for the smallest cone-restricted singular value.
As mentioned before, Renegar's condition number features implicitly in error bounds solutions of~\eqref{eq:conv-constr}: if $\vct{x}_0$ is a feasible point and $\hat{\vct{x}}$ is a solution of~\eqref{eq:conv-constr}, then $\norm{\hat{\vct{x}}-\vct{x}_0}\leq 2 \e \Ren_{\Desc(f,\vct{x}_0)}(\mtx{\Omega})/\norm{\mtx{\Omega}}$ (see, for example,~\cite{CRPW:12}).
Renegar's condition number was originally introduced to study the complexity of linear programming~\cite{rene:95b}, see ~\cite{VRP:07} for an analysis of the running time of an interior-point method for the convex feasibility problem in terms of this condition number,
and~\cite{Condition} for a discussion and references. In~\cite{roulet2015computational}, Renegar's condition number is used to study restart schemes for algorithms such as NESTA~\cite{becker2011nesta} in the context of compressed sensing.

Unfortunately, computing or even estimating the statistical dimension or condition numbers is notoriously difficult for all but a few examples. For the popular case $f(\vct{x})=\norm{\vct{x}}_1$, an effective method of computing $\delta(f,\vct{x}_0)$ was developed by Stojnic~\cite{stojnic10}, and subsequently generalized in~\cite{CRPW:12}, see also~\cite[Recipe 4.1]{edge}. In many practical settings the regularizer $f$ has the form $f(\vct{x})=g(\mtx{D}\vct{x})$ for
a matrix $\mtx{D}$, such as in the cosparse or $\ell_1$-analysis setting where $f(\vct{x})=\norm{\mtx{D}\vct{x}}_1$.
Even when it is possible to accurately estimate the statistical dimension (and thus, the permissible undersampling) for a function $g$, the method may fail for a composite function $g(\mtx{D}\vct{x})$, due to a lack of certain separability properties~\cite{zhang2016precise} (see~\cite{genzel2017ell} for recent bounds in the $\ell_1$-analysis setting).

\subsection{Main results - deterministic bounds} In this article we derive a characterization of Renegar's condition number associated to a cone as a measure of how much the statistical dimension can change under a linear image of the cone. 
The first result linking the statistical dimension with Renegar's condition is Theorem~\ref{thm:main-cond-bound}.
When using the usual matrix condition number, the upper bound in Equation~\eqref{eq:mu_r(TC)<=...-kappa-1} features implicitly in~\cite{kabanava2015robust,kabanava2015analysis} and appears to be folklore.

\begin{bigthm}\label{thm:main-cond-bound}
Let $C\subseteq\IR^n$ be a closed convex cone, and
$\delta(C)$ the statistical dimension of $C$. Then for $\vct A\in\IR^{p\times n}$,
\begin{equation}\label{eq:mu_r(TC)<=...-1}
  \delta(\mtx{A}C) \leq \Ren_C(\mtx{A})^2\cdot \delta(C),
\end{equation}
where $\Ren_{C}(\mtx{A})$ is Renegar's condition number associated to the matrix $\mtx{A}$ and the cone $C$.
If $p\geq n$, $\mtx{A}$ has full rank, and $\kappa(\mtx{A})$ denotes the matrix condition number of $\mtx{A}$, then
\begin{equation}\label{eq:mu_r(TC)<=...-kappa-1}
  \frac{\sdim(C)}{\kappa(\vct A)^2} \leq \sdim(\vct AC) \leq \kappa(\vct A)^2\,\cdot \sdim(C).
\end{equation}
\end{bigthm}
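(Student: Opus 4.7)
The plan is to combine the Gaussian representation $\sdim(K)=\Expect\|\Proj_K(\vct{g})\|^2$ of the statistical dimension with a deterministic pullback estimate, and then to pass to expectations by Gaussian covariance domination. For any closed convex cone $K$ the variational description $\|\Proj_K(\vct{g})\|=\sup\{\ip{\vct{g}}{\vct{v}}:\vct{v}\in K,\ \|\vct{v}\|\leq 1\}$ holds, with the supremum attained at $\Proj_K(\vct{g})/\|\Proj_K(\vct{g})\|$. I parametrise each $\vct{v}\in \mtx{A}C\cap \ball{p}$ as $\mtx{A}\vct{x}$ with $\vct{x}\in C$ and $\|\mtx{A}\vct{x}\|\leq 1$, and use the restricted singular value bound $\|\mtx{A}\vct{x}\|\geq \sres{A}{C}{\R^p}\,\|\vct{x}\|$ to force $\|\vct{x}\|\leq 1/\sres{A}{C}{\R^p}$. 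This yields the pointwise pullback $\|\Proj_{\mtx{A}C}(\vct{g})\|\leq \|\Proj_C(\mtx{A}^T\vct{g})\|/\sres{A}{C}{\R^p}$.

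To bound the right-hand side in expectation, observe that $\vct{h}\mapsto \|\Proj_C(\vct{h})\|^2$ is a non-negative convex function of $\vct{h}$ (being the square of a pointwise supremum of linear functionals), and that $\mtx{A}^T\mtx{A}\psdle \|\mtx{A}\|^2\Eye$. Writing $\|\mtx{A}\|\vct{g}'\stackrel{d}{=}\mtx{A}^T\vct{g}+\vct{z}$ with $\vct{g}'$ standard Gaussian in $\R^n$ and $\vct{z}$ an independent centred Gaussian, conditional Jensen's inequality gives $\Expect\|\Proj_C(\mtx{A}^T\vct{g})\|^2\leq \|\mtx{A}\|^2\,\sdim(C)$, and hence $\sdim(\mtx{A}C)\leq (\|\mtx{A}\|/\sres{A}{C}{\R^p})^2\,\sdim(C)$. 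This is one of the two terms in Renegar's minimum; the other, involving $\srestm{A}{\R^p}{C}$, follows by a mirror argument based on the polar representation $\sdim(\mtx{A}C)=p-\sdim((\mtx{A}C)^\circ)$ together with $(\mtx{A}C)^\circ=\{\vct{y}:\mtx{A}^T\vct{y}\in C^\circ\}$, pulling the projection onto $C^\circ$ back through $-\mtx{A}^T$, whose restricted singular value $\R^p\to C$ is $\srestm{A}{\R^p}{C}$ by definition. Taking the smaller of the two resulting bounds gives~\eqref{eq:mu_r(TC)<=...-1}.

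For the square invertible case~\eqref{eq:mu_r(TC)<=...-kappa-1}, the upper bound is immediate from $\Ren_C(\mtx{A})\leq\kappa(\mtx{A})$. The lower bound is obtained by applying the upper bound just established to $\mtx{A}^{-1}$ acting on $\mtx{A}C$: since $\mtx{A}^{-1}(\mtx{A}C)=C$, we have $\sdim(C)\leq \Ren_{\mtx{A}C}(\mtx{A}^{-1})^2\,\sdim(\mtx{A}C)$, and a short calculation based on the identity $\sres{A^{-1}}{\mtx{A}C}{\R^n}=1/\nres{A}{C}{\R^n}$ bounds $\Ren_{\mtx{A}C}(\mtx{A}^{-1})\leq \|\mtx{A}^{-1}\|\cdot\|\mtx{A}\|=\kappa(\mtx{A})$. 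I expect the polar pullback for the $\srestm$ term to be the most delicate step, since Renegar's condition mixes primal and dual quantities asymmetrically—in particular $\srestm$ vanishes whenever $\mtx{A}^T$ is not injective, so the corresponding bound is vacuous when $p>n$—and one must keep track of polarities and signs with care. The remaining pieces (the pullback, covariance domination, and inversion trick) are standard once the right viewpoint is fixed.
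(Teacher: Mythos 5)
Your treatment of the $\sres{A}{C}{\R^p}$ term is correct and is a genuine variant of the paper's argument: where the paper proves the set inclusion $\mtx{A}C\cap B^p\subseteq\lambda^{-1}\mtx{A}(C\cap B^n)$ of Lemma~\ref{lem:TC-1} and then invokes Slepian's inequality, you derive the pointwise pullback $\|\Proj_{\mtx{A}C}(\vct g)\|\leq \|\Proj_C(\mtx{A}^T\vct g)\|/\sres{A}{C}{\R^p}$ directly and replace Slepian by conditional Jensen applied to the convex function $\vct h\mapsto\|\Proj_C(\vct h)\|^2$ together with the covariance domination $\mtx{A}^T\mtx{A}\psdle\|\mtx{A}\|^2\Eye$. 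Both steps are valid for $r=2$, and your inversion trick for the lower bound in \eqref{eq:mu_r(TC)<=...-kappa-1} also works (the paper instead uses the reverse inclusion of Lemma~\ref{lem:TC-1} and a second Slepian comparison against $\sigma(\mtx{A})$).

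The gap is exactly where you flagged it: the $\srestm{A}{\R^p}{C}$ term cannot be obtained by polarity. Passing to polars converts a multiplicative bound $\delta(\mtx{A}C)\leq t\,\delta(C)$ into the affine bound $\delta\big((\mtx{A}C)^\polar\big)\geq p-tn+t\,\delta(C^\polar)$ --- this loss is precisely why Corollary~\ref{prop:improved-cond} carries the additive term $(1-\kappa^{-2})n$ --- so no bookkeeping of signs recovers a purely multiplicative statement along that route. Worse, the regime in which the $\srestm{A}{\R^p}{C}$ term is actually needed is $\srestm{A}{\R^p}{C}>0$, which by \eqref{eq:D(C,D)} forces $\mtx{A}C=\R^p$, hence $(\mtx{A}C)^\polar=\{\zerovct\}$ and $\delta\big((\mtx{A}C)^\polar\big)=0$: there is nothing left to lower-bound, yet one must still prove $p\leq\big(\|\mtx{A}\|/\srestm{A}{\R^p}{C}\big)^2\delta(C)$. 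A concrete instance is $C=\R^2$, $\mtx{A}=(1,0)\in\R^{1\times2}$, where $\sres{A}{C}{\R^1}=0$ makes your first bound vacuous while $\srestm{A}{\R^1}{C}=1$ carries the theorem. The correct mechanism, used in part (2) of the proof of Lemma~\ref{lem:TC-1}, is that $\srestm{A}{\R^p}{C}$ equals the inradius of $\mtx{A}(C\cap B^n)$; when it is positive one gets $B^p\subseteq\srestm{A}{\R^p}{C}^{-1}\,\mtx{A}(C\cap B^n)$, and then the very same support-function pullback you used for the first term, namely $\|\Proj_{\mtx{A}C}(\vct g)\|=\|\vct g\|\leq\srestm{A}{\R^p}{C}^{-1}\|\Proj_C(\mtx{A}^T\vct g)\|$, followed by your Jensen step, closes the argument. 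This case is not a technicality here: it is the one that occurs in Theorem~\ref{thm:B}, where $\Ren_C(\mtx{PQA})$ is evaluated for a rank-$m$ map with $m<n$, so that $\ker(\mtx{PQA})\cap C$ may well be nontrivial.
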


\begin{example}\label{ex:tv}
Consider the $n\times n$ finite difference matrix
\begin{equation*}
  \mtx{D} = \begin{pmatrix} 
             -1 & 1 & 0 & \cdots & 0 \\
             0 & -1 & 1 & \cdots & 0 \\
             0 & 0 & -1 & \cdots & 0 \\
             \vdots & \vdots & \vdots & \ddots & \vdots \\
             0 & 0 & 0 & \cdots & -1 
            \end{pmatrix}.
\end{equation*}
This matrix is usually defined with an additional column $(0,\dots,0,1)^T$, but for simplicity, and to work with a square matrix of full rank, we work with this truncated version.
The condition number is known to be of order $\Omega(n)$, making condition bounds using the normal matrix condition number useless.
Using Renegar's condition number with respect to a cone, on the other hand, can improve the situation dramatically. Consider, for example, the cone
\begin{equation*}
  C = \{ \vct{x}\in \R^n \mid x_1\geq 0, \ x_ix_{i+1}\leq 0 \text{ for } 1\leq i<n\}.
\end{equation*}
This cone is the orthant consisting of vectors with alternating signs. The cone-restricted singular value of $\mtx{D}$ is given by
\begin{align*}
  \sigma_C(\mtx{D})^2 &= \min_{\vct{x}\in C\cap S^{n-1}} \|\mtx{D}\vct{x}\|^2\\
  &= \min_{\vct{x}\in C\cap S^{n-1}}\sum_{i=1}^{n-1} (x_{i+1}-x_i)^2 + x_n^2\\
  &= \min_{\vct{x}\in C\cap S^{n-1}} 2-x_1^2-\sum_{i=1}^{n-1} 2x_ix_{i+1}\geq 1.
\end{align*}
Using the same expression for $\|\mtx{D}\vct{x}\|^2$, we see that the square of the operator norm is bounded by $4$, so that the square of Renegar's condition number with respect to this cone is bounded by $4$. If, on the other hand, $C$ is the non-negative orthant, then Renegar's condition number coincides with the normal matrix condition number. Intuitively, Renegar's condition number gives an improvement if the cone $C$ captures a portion of the ellipsoid defined by $\mtx{D}\mtx{D}^T$ that is not too eccentric. Other examples when Renegar's condition number gives significant improvements is for small cones (such as the cone of increasing sequences) or cones contained in linear subspaces of small dimension (such as subdifferential cones of the $1$ or $\infty$ norms).
\end{example}

Theorem~\ref{thm:main-cond-bound} translates into a bound for the statistical dimension of convex regularizers by observing that if $f(\vct{x}) = g(\mtx{Dx})$ with invertible $\mtx{D}$, then (see Section~\ref{sec:applications}) the descent cone of $f$ at $\vct{x}_0$ is given by $\Desc(f,\vct{x}_0) = \mtx{D}^{-1}\Desc(g,\mtx{D}\vct{x}_0)$. Throughout this paper, we will use $\mtx{A}$ for the transformation matrix in the setting of convex cones, and $\mtx{D}$ for the matrix appearing in a regularizer.

\begin{corollary}\label{cor:fromA}
Let $f(\vct{x}) = g(\vct{D}\vct{x})$, where $g$ is a proper convex function and let $\mtx{D}\in \R^{n\times n}$ be non-singular. Then
\begin{equation*}
  \delta(f,\vct{x}_0) \leq \Ren_{\Desc(g,\mtx{D}\vct{x}_0)}\left(\mtx{D}^{-1}\right) \cdot \delta(g,\mtx{D}\vct{x}_0).
\end{equation*}
In particular, 
\begin{equation*}
  \frac{\delta(g,\mtx{D}\vct{x}_0)}{\kappa(\mtx{D})^2}\leq \delta(f,\vct{x}_0) \leq \kappa(\mtx{D})^2 \cdot \delta(g,\mtx{D}\vct{x}_0).
\end{equation*}
\end{corollary}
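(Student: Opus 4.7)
The plan is to reduce both inequalities of the corollary directly to Theorem~\ref{thm:main-cond-bound}, applied with the matrix $\mtx{D}^{-1}$ and the cone $C := \Desc(g,\mtx{D}\vct{x}_0)$. The only non-automatic ingredient is a short lemma identifying the descent cone of the composite regularizer $f=g\circ\mtx{D}$ under an invertible pre-composition.

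First I would verify the descent-cone identity
$$\Desc(f,\vct{x}_0) \;=\; \mtx{D}^{-1}\,\Desc(g,\mtx{D}\vct{x}_0).$$
For any direction $\vct{v}$ and step $t>0$ one has $f(\vct{x}_0+t\vct{v}) = g(\mtx{D}\vct{x}_0 + t\mtx{D}\vct{v})$, so $\vct{v}$ is a descent direction for $f$ at $\vct{x}_0$ if and only if $\mtx{D}\vct{v}$ is a descent direction for $g$ at $\mtx{D}\vct{x}_0$. Taking the conic hull of descent directions and using the invertibility of $\mtx{D}$ to pull the cone back, the identity follows. (This is the observation explicitly alluded to in the paragraph preceding the corollary.)

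With $\mtx{A}=\mtx{D}^{-1}$ and $C=\Desc(g,\mtx{D}\vct{x}_0)$, Step~1 gives $\mtx{A}C=\Desc(f,\vct{x}_0)$. Plugging this into inequality~\eqref{eq:mu_r(TC)<=...-1} of Theorem~\ref{thm:main-cond-bound} produces
$$\delta(f,\vct{x}_0) \;\le\; \Ren_{\Desc(g,\mtx{D}\vct{x}_0)}\!\bigl(\mtx{D}^{-1}\bigr)^{\!2} \cdot \delta(g,\mtx{D}\vct{x}_0),$$
which is the first assertion of the corollary (modulo what appears to be a missing square in the displayed exponent). For the second assertion the matrix $\mtx{D}^{-1}\in\R^{n\times n}$ is square and of full rank, so the $\kappa$-form \eqref{eq:mu_r(TC)<=...-kappa-1} applies with the same $C$, giving
$$\frac{\delta(C)}{\kappa(\mtx{D}^{-1})^{2}} \;\le\; \delta(\mtx{D}^{-1}C) \;\le\; \kappa(\mtx{D}^{-1})^{2}\cdot\delta(C).$$
Since $\kappa(\mtx{D}^{-1})=\kappa(\mtx{D})$ and since by Step~1 the middle term equals $\delta(f,\vct{x}_0)$ while $\delta(C)=\delta(g,\mtx{D}\vct{x}_0)$, we obtain the claimed two-sided bound.

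Because Theorem~\ref{thm:main-cond-bound} does all the heavy lifting, no genuine obstacle remains; the proof is essentially a substitution. The only place where care is needed is to apply Theorem~\ref{thm:main-cond-bound} to $\mtx{D}^{-1}$ rather than to $\mtx{D}$, so that the image cone $\mtx{A}C$ is $\Desc(f,\vct{x}_0)$ and not $\mtx{D}\,\Desc(f,\vct{x}_0)$. The descent-cone identity in Step~1 requires only that $\mtx{D}$ be invertible, which is precisely the hypothesis of the corollary; in particular, no assumption on $g$ beyond properness and convexity is needed.
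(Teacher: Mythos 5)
Your proof is correct and follows essentially the same route as the paper: identify $\Desc(f,\vct{x}_0)=\mtx{D}^{-1}\Desc(g,\mtx{D}\vct{x}_0)$ and substitute into Theorem~\ref{thm:main-cond-bound} with $\mtx{A}=\mtx{D}^{-1}$, using $\kappa(\mtx{D}^{-1})=\kappa(\mtx{D})$. Your observation that the first displayed bound should carry a square on the Renegar condition number (as in~\eqref{eq:mu_r(TC)<=...-1}) is a correct catch of a typo in the statement.
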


\begin{remark}
It is interesting to compare the bounds in Corollary~\ref{cor:fromA} to the condition number bounds for sparse recovery by $\ell_1$-minimization from~\cite{kueng2014ripless}. If $\mtx{D}\in \R^{n\times n}$ is invertible, then Problem~\ref{eq:lin-inv-gen} with $f(\vct{x})=g(\mtx{D}\vct{x})$ is mathematically equivalent to
\begin{equation}\label{eq:lin-inv-new}
 \minimize g(\vct{y}) \subjto \mtx{\Omega}\mtx{D}^{-1}\vct{y}=\vct{b}.
\end{equation}
In~\cite{kueng2014ripless}, the authors consider measurement matrices $\mtx{\Omega}$ for which the rows $\vct{\omega}^T$ are sampled according to a distribution with covariance $\Expect[\vct{\omega}\vct{\omega}^T]$.
In the isotropic case where the covariance is a multiple of the identity matrix, the measurement ensemble in~\ref{eq:lin-inv-new} is non-isotropic and the covariance matrix has condition number proportional to $\kappa(\mtx{D})^2$. In~\cite[Theorem 2]{kueng2014ripless}, a lower bound on the number of measurements needed for recovering a signal is given that involves the condition number of the covariance matrix. The bounds in~\cite{kueng2014ripless} apply directly to the number of measurements for recovery by $\ell_1$-minimization, and under rather general assumptions on the distribution. Moreover, the bounds in~\cite{kueng2014ripless} rely on the condition number restricted to sparse vectors, while in our case we consider Renegar's condition number with respect to the descent cone.
The bounds in Corollary~\ref{cor:fromA} also apply to any convex regularizer, and their applicability to sparse recovery is via the proxy of the statistical dimension, and thus restricted to situations in which this parameter delivers recovery bounds.
\end{remark}

While Renegar's condition number, defined by restricting the smallest singular value to a cone, can improve the bound, computing this condition number is not always practical. Using polarity~\eqref{eq:complement}, we get the following version of the bound that ensures that the right-hand side is always bounded by $n$. 

\begin{corollary}\label{prop:improved-cond}
Let $C\subseteq\IR^n$ be a closed convex cone, and
$\delta(C)$ the statistical dimension of $C$. Let $\vct A\in\IR^{n\times n}$ be non-singular. Then
\begin{equation*}
  \sdim(\vct AC) \leq \kappa(\vct A)^{-2} \cdot \sdim(C) + \left(1-\kappa(\vct A)^{-2}\right)\cdot n.
\end{equation*}
If $f(\vct{x}) = g(\vct{D}\vct{x})$, where $g$ is a proper convex function and $\mtx{D}\in \R^{p\times n}$ with $p\geq n$, then
\begin{equation}\label{eq:convreg-cond-bound}
   \delta(f,\vct{x}_0) \leq \kappa(\mtx{D})^{-2} \cdot \delta(g,\mtx{D}\vct{x}_0) + \left(1-\kappa(\mtx{D})^{-2}\right) \cdot n.
\end{equation}
\end{corollary}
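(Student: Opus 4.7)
The plan is to apply Theorem~\ref{thm:main-cond-bound} not to the cone $C$ itself but to its polar $C^\circ$, and then translate back via the complementarity relation $\sdim(C) + \sdim(C^\circ) = n$ referenced as \eqref{eq:complement}. The point is that the multiplicative upper bound $\sdim(\vct{A}C) \leq \kappa(\vct{A})^2\,\sdim(C)$ from Theorem~\ref{thm:main-cond-bound} is wasteful when $\sdim(C)$ is already close to $n$. Polarizing converts that wasteful \emph{upper} bound into a useful \emph{lower} bound on $\sdim(\vct{A}^{-T}C^\circ)$, which then turns back into a sharper upper bound on $\sdim(\vct{A}C)$ after we undo the polarization.

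Concretely, for non-singular $\vct{A}\in\R^{n\times n}$ one has $(\vct{A}C)^\circ = \vct{A}^{-T}C^\circ$ (an immediate consequence of the definition of the polar together with $\ip{\vct{A}\vct{x}}{\vct{y}} = \ip{\vct{x}}{\vct{A}^{T}\vct{y}}$), so complementarity yields
\begin{equation*}
  \sdim(\vct{A}C) \;=\; n \,-\, \sdim\bigl(\vct{A}^{-T}C^\circ\bigr).
\end{equation*}
Applying the \emph{lower} bound of Theorem~\ref{thm:main-cond-bound} to the matrix $\vct{A}^{-T}$ and the cone $C^\circ$, and using the identity $\kappa(\vct{A}^{-T}) = \kappa(\vct{A})$, gives
\begin{equation*}
  \sdim\bigl(\vct{A}^{-T}C^\circ\bigr) \;\geq\; \kappa(\vct{A})^{-2}\,\sdim(C^\circ) \;=\; \kappa(\vct{A})^{-2}\bigl(n - \sdim(C)\bigr).
\end{equation*}
Substituting into the previous display and rearranging delivers the first inequality. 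Observe that it interpolates correctly: when $\kappa(\vct{A})=1$ it reduces to $\sdim(\vct{A}C)\leq \sdim(C)$, and as $\kappa(\vct{A})\to\infty$ it degrades only to the trivial bound $n$, never worse.

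The regularizer statement \eqref{eq:convreg-cond-bound} is then a direct specialization. Using the descent-cone identity $\Desc(f,\vct{x}_0) = \mtx{D}^{-1}\Desc(g,\mtx{D}\vct{x}_0)$ invoked in the derivation of Corollary~\ref{cor:fromA}, set $\vct{A} = \mtx{D}^{-1}$ and $C = \Desc(g,\mtx{D}\vct{x}_0)$ in the first bound and use $\kappa(\mtx{D}^{-1}) = \kappa(\mtx{D})$. I do not anticipate a genuine obstacle: the only non-trivial ingredient is the polarity identity $(\vct{A}C)^\circ = \vct{A}^{-T}C^\circ$ for invertible $\vct{A}$, which is a one-line check, and the complementarity relation $\sdim(C)+\sdim(C^\circ)=n$, which is already quoted as \eqref{eq:complement}.
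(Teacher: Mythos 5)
Your proposal is correct and follows essentially the same route as the paper: polarize via $(\vct{A}C)^\circ = \vct{A}^{-T}C^\circ$ (the paper cites Lemma~\ref{lem:A^(-1)(D^*)} for this), apply the lower bound of Theorem~\ref{thm:main-cond-bound} to $\vct{A}^{-T}$ acting on $C^\circ$, and use the complementarity relation \eqref{eq:complement} twice, with the regularizer statement obtained by the same specialization $\vct{A}=\mtx{D}^{-1}$, $C=\Desc(g,\mtx{D}\vct{x}_0)$. No gaps.
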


The simple proof of Corollary~\ref{prop:improved-cond} is given in Section~\ref{sec:improved}.
One can interpret the upper bounds in Corollary~\ref{prop:improved-cond} as interpolating between the statistical dimension of $C$ and the ambient dimension $n$. 

\begin{remark}
The restriction to invertible dictionaries $\mtx{D}$ may look limiting at first, but a closer look reveals that it is not necessary when working with the subdifferential cone instead of the descent cone (see Section~\ref{sec:conv-anal} for the relevant definitions and background). In fact, given a proper convex function $f(\vct{x})=g(\mtx{D}\vct{x})$, the statistical dimensions of the descent cone and that of the subdifferential cone are related as
\begin{equation*}
  \delta(f,\vct{x}_0) = n-\delta(\cone(\partial f(\vct{x}_0))). 
\end{equation*}
Therefore, lower bounds on the statistical dimension of the subdifferential cone imply upper bounds on the statistical dimension of $f$. It is well known that $\cone(\partial f(\vct{x}_0))=\mtx{D}^T\cone(\partial g(\mtx{D}\vct{x}_0))$, and therefore if $\mtx{D}\in \R^{p\times n}$ with $p\leq n$, we can apply the lower bound from~\eqref{eq:mu_r(TC)<=...-kappa-1}. In applications, however, the case $p\geq n$ is of interest. In this case one should note that the subdifferential cone is often contained in a linear subspace of dimension at most $n$, and by common invariance properties of the statistical dimension (Section~\ref{sec:stat-dim}) it is enough to work with the restriction of $\mtx{D}^T$ to this lower dimensional subspace. Proposition~\ref{prop:l1analysis} illustrates this idea in the case of the $1$-norm.
\end{remark}

In the statement of the proposition below, we use the notation $\mtx{A}_I$ for the submatrix of a matrix $\mtx{A}$ with columns indexed by $I\subset [n]=\{1,\dots,n\}$, and denote by $I^c=[n]\backslash I$ the complement of $I$. The proof is postponed to Section~\ref{sec:applications}. 

\begin{proposition}\label{prop:l1analysis}
Let $\mtx{D}\in \R^{p\times n}$, $p\geq n$, be such that all $n\times n$ minors of $\mtx{D}$ have full rank,
and $\mtx{A}\in \R^{m\times n}$ with $m\leq n$. Consider the problem
\begin{equation}\label{eq:cosparse-conv}
  \minimize \norm{\mtx{D}\vct{x}}_1 \quad \subjto \mtx{\Omega}\vct{x}=\vct{b}.
\end{equation}
Let $\vct{x}_0$ be such that $\mtx{\Omega}\vct{x}_0=\vct{b}$, and such that $\vct{y}_0=\mtx{D}\vct{x}_0$ is $s$-sparse with support $I\subset [p]$. Let $\mtx{C}\in \R^{n\times p-s+1}$ be a matrix whose first $p-s$ columns consist of the columns of $\mtx{D}^T$ that are indexed by $I^{c}$, and the last column is $\vct{c}_{p-s+1}=\frac{1}{\sqrt{s}}\sum_{j\in I}\mathrm{sign}((\vct{y}_0)_j)\vct{d}_j$, where the vectors $\vct{d}_j$ denote the columns of $\mtx{D}^T$. 
Then
\begin{equation}\label{eq:upper-bound-1}
  \delta(\norm{\mtx{D}\cdot}_1,\vct{x}_0) \leq \kappa(\mtx{C})^{-2} \cdot \delta(\norm{\cdot}_1,\mtx{D}\vct{x}_0) + \left(1-(p/n)\kappa(\mtx{C})^{-2}\right) \cdot n
\end{equation}
In particular, given $\eta\in (0,1)$, Problem~\eqref{eq:cosparse-conv} with Gaussian measurement matrix succeeds with probability $1-\eta$ if
\begin{equation*}
  m\geq \kappa(\vct{C})^{-2}\cdot \delta(\norm{\cdot}_1,\mtx{D}\vct{x}_0)+\left(1-(p/n)\kappa(\mtx{C})^{-2}\right)\cdot n+a_{\eta} \sqrt{n}, 
\end{equation*}
\end{proposition}

\begin{example}
An illustrative example is the finite difference matrix $\mtx{D}$ of example~\ref{ex:tv}.
The regularizer $f(\vct{x})=\lone{\mtx{Dx}}$ is a one-dimensional version of a total variation regularizer, and is used to promote gradient sparsity. The standard method~\cite[Recipe 4.1]{edge} for computing the statistical dimension of the descent cone of $f$ is not easily applicable here, as this regularizer is not separable~\cite{zhang2016precise} (in fact, it would require a careful analysis of the structure of the signal with sparse gradient to be recovered). 
The standard condition number bound Theorem~\ref{thm:main-cond-bound} is also not applicable,
as it is known that the condition number satisfies $\kappa(\mtx{D})\geq \frac{2(n+1)}{\pi}$. Figure~\ref{fig:tvplot} plots the upper bound of Proposition~\ref{prop:l1analysis} for signals with random support location and sparsity ranging from $1$ to $200$, and compares it to the actual statistical dimension computed by Monte Carlo simulation. As can be seen in this example, the upper bound is not very useful because of the large condition numbers involved.
\begin{figure}[h!]
\begin{center}
\includegraphics[width=0.5\textwidth]{./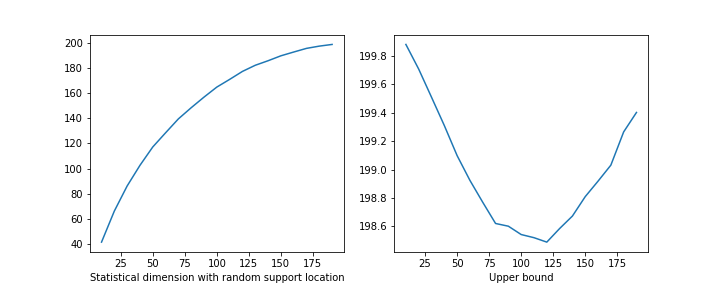}
\end{center}
\caption{The statistical dimension of $\norm{\vct{D}\cdot}_1$ for different sparsity levels and the upper bound~\eqref{eq:upper-bound-1}.}\label{fig:tvplot}
\end{figure}
\end{example}

\begin{remark}
It is natural to ask for which dictionaries $\mtx{D}$ Proposition~\ref{prop:l1analysis} gives good bounds. 
This clearly also depends on the support of the signal one wishes to recover. A closer look at the matrix $\mtx{C}$ in the case of the finite difference matrix and for {\em monotonely increasing} signals shows that $\mtx{C}$ is (up to rows of zeros) itself a finite difference matrix of order $n-s+1$, and the quality of the bounds increases with the size of the support. Another natural example is when $\mtx{D}\in \R^{p\times n}$ is a Gaussian random matrix (that is, a matrix whose entries are independent standard normal distributed random variables). In this case, the invariance properties of Gaussians imply that the matrix $\mtx{C}$ is again a Gaussian matrix in $\R^{n\times p-s+1}$. For such matrices, the condition number is known to be of order $(\sqrt{n}+\sqrt{p-s+1})/(\sqrt{n}-\sqrt{p-s+1})$ with high probability, see for example~\cite[Theorem 5.32]{Vershynin2012}. In this example we see again that if the support is large, $s\approx p$, then the condition number is close to $1$ and the bound becomes useful. 
\end{remark}

Note that so far we have seen two types of bounds: those based on the upper bound using Renegar's condition number in Theorem~\ref{thm:main-cond-bound}, which improve on the standard condition number by using the cone-restricted smallest singular value, and those based on duality and the lower bound of Theorem~\ref{thm:main-cond-bound}. The latter only work using the standard matrix condition number, but apply to the matrix restricted to the subspace generated by the cone of interest. Both bounds could yield good results for tight frames / well-conditioned matrices, but fail to give useful bounds in cases such as the finite difference matrix, or for redundant dictionaries $\mtx{D}$ for which the statistical dimension~$\delta(\|\cdot\|_1,\mtx{D}\vct{x}_0)$ is proportional to the (larger) ambient dimension. In the next section we discuss randomized improvements.

\subsection{Main results - probabilistic bounds} 
While Corollary~\ref{prop:improved-cond} ensures that the upper bound does not become completely trivial, when $\mtx{D}$ is ill-conditioned it still does not give satisfactory results, as seen in Example~\ref{ex:tv}. The second part, and main contribution, of our work is
an improvement of the condition bounds using randomization: using methods from conic integral geometry, we derive a ``preconditioned'' version of Theorem~\ref{thm:main-cond-bound}. The idea is based on the philosophy that a randomly oriented convex cone $C$ ought to behave roughly like a linear subspace of dimension $\delta(C)$. In that sense, the statistical dimension of a cone $C$ should be approximately invariant under projecting $C$ to a subspace of dimension close to $\delta(C)$. In fact, in Section~\ref{sec:tqc} we will see that for $n\geq m\gtrapprox \delta(C)$, we have
\begin{equation*}
  \Expect_{\mtx{Q}}\left[\delta(\mtx{P}_m\mtx{Q}C)\right] \approx \delta(C),
\end{equation*}
where $\mtx{P}_m$ is the projection on the the first $m$ coordinates and where the expectation is with respect to a random orthogonal matrix $\mtx{Q}$, distributed according to the normalized Haar measure on the orthogonal group.
From this it follows that the condition bounds should ideally depend not on the conditioning of $\mtx{D}$ itself, but on a generic projection of $\mtx{D}$ to linear subspace of dimension of order $\delta(C)$. 
For $m\leq n$ define
\begin{equation*}
  \overline{\kappa}_{m}^2(\mtx{A}) := \Expect_{\mtx{Q}}[\kappa(\mtx{P}_m\mtx{QA})^2], \quad \overline{\mathcal{R}}^2_{C,m}(\mtx{A}):=\Expect_{\mtx{Q}}\left[\Ren_C(\mtx{P}_m\mtx{QA})^2\right].
\end{equation*}

\begin{bigthm}\label{thm:B}
Let $C\subseteq \R^n$ be a closed convex cone and $\mtx{A}\in \R^{p\times n}$ be a matrix of full rank. Let $\eta\in (0,1)$ and assume that $m\geq \delta(C)+2\sqrt{\log(2/\eta)m}$. Then
\begin{equation*}
  \delta(\mtx{A}C)\leq \overline{\mathcal{R}}^2_{C,m}(\mtx{A}) \cdot \delta(C)+(n-m)\eta.
\end{equation*}
For the matrix condition number,
\begin{equation}\label{eq:convreg-upper-bound}
  \delta(\mtx{A}C) \leq \overline{\kappa}_{m}^2(\mtx{A}) \cdot \delta(C)+(n-m)\eta.
\end{equation}
\end{bigthm}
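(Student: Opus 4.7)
The plan is to combine the deterministic bound of Theorem~\ref{thm:main-cond-bound}, applied to the randomly projected matrix $\mtx{PQA}$, with a conic integral-geometric stability result asserting that the statistical dimension of a cone in $\R^n$ is preserved by a uniformly random orthogonal projection onto an $m$-dimensional subspace, once $m$ comfortably exceeds $\delta(C)$.

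For each orthogonal $\mtx{Q}$ the matrix $\mtx{PQA}\in\R^{m\times n}$ is a linear map $\R^n\to\R^m$, and Theorem~\ref{thm:main-cond-bound} applied with this matrix and the cone $C$ yields the pointwise bound $\delta(\mtx{PQA}C)\leq\Ren_C(\mtx{PQA})^2\cdot\delta(C)$. Averaging against the Haar measure on the orthogonal group gives
$$
\Expect_\mtx{Q}[\delta(\mtx{PQA}C)]\leq\Expect_\mtx{Q}[\Ren_C(\mtx{PQA})^2]\cdot\delta(C),
$$
so it suffices to prove the comparison $\delta(\mtx{A}C)\leq\Expect_\mtx{Q}[\delta(\mtx{PQA}C)]+(n-m)\eta$. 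Setting $K:=\mtx{A}C$, the polarity identity $\delta(D)+\delta(D^\circ)=m$ for cones $D\subseteq\R^m$ applied to $D=\mtx{PQ}K$, combined with the isometric identification $(\mtx{PQ}K)^\circ\cong K^\circ\cap L$ for the uniformly random $m$-dim subspace $L:=\mtx{Q}^T\mtx{P}^T\R^m\subseteq\R^n$ (a consequence of the adjointness between projection and embedding), gives $\delta(\mtx{PQ}K)=m-\delta(K^\circ\cap L)$. The resulting pointwise gap
$$
\delta(K)-\delta(\mtx{PQ}K)\,=\,(n-m)-\bigl(\delta(K^\circ)-\delta(K^\circ\cap L)\bigr)
$$
lies in $[0,n-m]$ via $K^\circ\cap L\subseteq K^\circ$, so the real content is to show that, under the stated hypothesis, the good event on which $\delta(K^\circ)-\delta(K^\circ\cap L)\geq n-m$---equivalently $\delta(\mtx{PQ}K)\geq\delta(K)$---has probability at least $1-\eta$. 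This is where the two tools flagged in the abstract enter: the approximate conic kinematic formula provides an estimate for $\Expect[\delta(K^\circ\cap L)]$ of the form $\max\{0,\delta(K^\circ)-(n-m)\}$, while Slepian's inequality concentrates $\delta(K^\circ\cap L)$ around this mean with a Gaussian tail whose rate matches the slack $2\sqrt{\log(2/\eta)m}$ in the hypothesis.

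Chaining the two preceding bounds delivers the first inequality of the theorem. For the version in terms of $\overline{\kappa}_m^2(\mtx{A})$, the same scheme applies with Renegar's condition number replaced by the matrix condition number via the pointwise estimate $\Ren_C(\mtx{PQA})\leq\kappa(\mtx{PQA})$, valid on the same good event (on which $\mtx{PQA}$ is injective on $C$ and the first restricted singular value in the definition of $\Ren_C$ dominates $\sigma(\mtx{PQA}^T)$); the contribution of the complementary bad event is already absorbed into the $(n-m)\eta$ correction, and passing to expectation recognises $\overline{\kappa}_m^2(\mtx{A})$.

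The main obstacle is the quantitative concentration in the second paragraph: the slack $2\sqrt{\log(2/\eta)m}$ is much sharper than what Markov's inequality on the approximate kinematic formula alone would yield, and extracting the stated rate requires the Gaussian-process comparison in Slepian's inequality applied to the Gaussian-width formulation of the statistical dimension of the random intersection $K^\circ\cap L$. A subtle point is that the hypothesis constrains $\delta(C)$ rather than $\delta(K)=\delta(\mtx{A}C)$ directly, so the concentration must be compatible with the Renegar estimate of Theorem~\ref{thm:main-cond-bound} rather than phrased purely as a property of the cone $K$.
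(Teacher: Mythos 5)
Your first step---applying Theorem~\ref{thm:main-cond-bound} pointwise to $\mtx{PQA}$ and averaging over $\mtx{Q}$---is exactly what the paper does. The gap is in how you justify the comparison $\delta(\mtx{A}C)\leq\Expect_{\mtx{Q}}[\delta(\mtx{PQ}(\mtx{A}C))]+(n-m)\eta$. The paper obtains this entirely at the level of expectations: by the projection formula~\eqref{eq:projection}, $\Expect_{\mtx{Q}}[\delta(\mtx{PQ}K)]=\delta(K)-\sum_{k=1}^{n-m}k\,v_{k+m}(K)$ exactly, and the deficit is at most $(n-m)\,t_{m+1}(K)\leq(n-m)\eta$ by the concentration of the \emph{deterministic} intrinsic-volume sequence (Corollary~\ref{col:approx-kinematic}). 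No statement about the fluctuations of the random variable $\mtx{Q}\mapsto\delta(\mtx{PQ}K)$ is required anywhere.

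You instead try to prove the stronger high-probability statement that $\delta(\mtx{PQ}K)\geq\delta(K)$ with probability at least $1-\eta$, and two things go wrong. First, your claim that the gap $\delta(K)-\delta(\mtx{PQ}K)$ always lies in $[0,n-m]$ fails on the lower end: take $n=3$, $m=2$, $K=-\R^3_{\geq0}$ and $L=\{x_1+x_2+x_3=0\}$; then $K^\polar\cap L=\{\vct 0\}$, so $\delta(\mtx{PQ}K)=m-\delta(K^\polar\cap L)=2>3/2=\delta(K)$. Only the upper bound $n-m$ holds pointwise (via monotonicity), so the nonnegativity you lean on is not automatic. Second, and more seriously, the concentration you invoke does not exist in the paper and is not what Slepian's inequality delivers: Slepian compares \emph{expectations} of suprema of Gaussian processes and gives no tail bound for $\delta(K^\polar\cap L)$ as a function of the Haar-random subspace $L$, while Theorem~\ref{thm:concentration} concerns the distribution $\{v_k(C)\}_k$ of a fixed cone, not the random variable $L\mapsto\delta(K^\polar\cap L)$. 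Concentration of that random variable is a genuinely harder statement, and it is unnecessary: the expectation identity above (equivalently, the Crofton computation of $\Expect[\delta(K^\polar\cap L)]$ in your polar reformulation) already closes the argument. One point in your favour: you correctly flag that the projection estimate is applied to $K=\mtx{A}C$ while the hypothesis controls $\delta(C)$; this mismatch is real, but you leave it unresolved, exactly as the paper does.
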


As a consequence of Theorem~\ref{thm:B} we get the following preconditioned version of the previous bounds.

\begin{corollary}\label{cor:fromB}
Let $f(\vct{x}) = g(\vct{D}\vct{x})$, where $g$ is a proper convex function and $\mtx{D}\in \R^{n\times n}$ is non-singular. Let $\eta\in (0,1)$ and assume that $m\geq \delta(g,\mtx{D}\vct{x}_0)+2\sqrt{\log(2/\eta)m}$. Then
\begin{equation}\label{eq:convreg-upper-bound}
  \delta(f,\vct{x}_0) \leq 
  \overline{\mathcal{R}}^2_{\Desc(g,\vct{Dx}_0),m}(\mtx{D}^{-1})
   \cdot \delta(g,\vct{D}\vct{x}_0)+(n-m)\eta
\end{equation}
and 
\begin{equation*}
 \delta(f,\vct{x}_0) \leq \overline{\kappa}_m^2(\mtx{D}^{-1}) \cdot \delta(g,\mtx{D}\vct{x}_0)+(n-m)\eta.
\end{equation*}
\end{corollary}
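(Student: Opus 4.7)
The plan is to reduce Corollary~\ref{cor:fromB} directly to Theorem~\ref{thm:B} by identifying the descent cone of the composite regularizer $f(\vct{x})=g(\mtx{D}\vct{x})$ with a linear image of the descent cone of $g$, and then invoking the probabilistic condition bound already proved.

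First, I would record the key geometric identity used in the proof of Corollary~\ref{cor:fromA}: when $\mtx{D}$ is non-singular, the map $\vct{y}\mapsto \mtx{D}^{-1}\vct{y}$ carries directions of decrease of $g$ at $\mtx{D}\vct{x}_0$ bijectively to directions of decrease of $f=g\circ\mtx{D}$ at $\vct{x}_0$, so that
\begin{equation*}
  \Desc(f,\vct{x}_0) \;=\; \mtx{D}^{-1}\,\Desc(g,\mtx{D}\vct{x}_0).
\end{equation*}
Consequently, with the shorthand $C\defeq \Desc(g,\mtx{D}\vct{x}_0)$, we have $\sdim(f,\vct{x}_0)=\sdim(\mtx{D}^{-1} C)$ and $\sdim(C)=\sdim(g,\mtx{D}\vct{x}_0)$.

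Next I would apply Theorem~\ref{thm:B} with the substitution $\mtx{A}\leftarrow \mtx{D}^{-1}$ and $C\leftarrow \Desc(g,\mtx{D}\vct{x}_0)$. The hypothesis $m\geq \sdim(C)+2\sqrt{\log(2/\eta)\,m}$ in Theorem~\ref{thm:B} is exactly the hypothesis stated in the corollary, since $\sdim(C)=\sdim(g,\mtx{D}\vct{x}_0)$. The first inequality of Theorem~\ref{thm:B} therefore yields
\begin{equation*}
  \sdim(f,\vct{x}_0) \;=\; \sdim(\mtx{D}^{-1} C) \;\leq\; \Expect_{\mtx{Q}}\bigl[\Ren_{C}(\mtx{P}\mtx{Q}\mtx{D}^{-1})^2\bigr]\cdot \sdim(C) + (n-m)\eta,
\end{equation*}
which is exactly the first bound claimed. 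The second bound follows identically from the second inequality in Theorem~\ref{thm:B}, after noting that the quantity $\overline{\kappa}_m^2(\mtx{D}^{-1})=\Expect_{\mtx{Q}}[\kappa(\mtx{P}\mtx{Q}\mtx{D}^{-1})^2]$ is the appropriate preconditioned condition number.

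There is essentially no obstacle here: all the work has already been done in Theorem~\ref{thm:B}, and the only ingredient to add is the descent-cone identity, which we already used in Corollary~\ref{cor:fromA}. The one point worth being careful about is that Theorem~\ref{thm:B} is stated for square matrices $\mtx{A}\in\R^{n\times n}$, and since $\mtx{D}\in\R^{n\times n}$ is non-singular its inverse $\mtx{D}^{-1}$ is again $n\times n$, so the hypothesis is met without modification.
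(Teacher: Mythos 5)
Your proposal is correct and is exactly the argument the paper intends: the corollary is obtained by combining the descent-cone identity $\Desc(f,\vct{x}_0)=\mtx{D}^{-1}\Desc(g,\mtx{D}\vct{x}_0)$ (already used for Corollary~\ref{cor:fromA}) with Theorem~\ref{thm:B} applied to $\mtx{A}=\mtx{D}^{-1}$ and $C=\Desc(g,\mtx{D}\vct{x}_0)$. The paper leaves this step implicit, and your check that the hypothesis on $m$ and the squareness/non-singularity of $\mtx{D}^{-1}$ line up is the only content needed.
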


\begin{example}
Consider a diagonal matrix $\mtx{\Sigma}$ and the average condition $\overline{\kappa}_m^2(\mtx{\Sigma})$. Intuitively, the average condition measures the expected eccentricity of the projection of an ellipsoid to a random subspace. 
\end{example}

\begin{example}\label{ex:D}
Using the finite difference matrix $\mtx{D}$ from Example~\ref{ex:tv}, note that it is physically not possible, nor do we aim to, locate the precise phase transition for the recovery with $f(\vct{x})=\lone{\mtx{D}\vct{x}}$ in terms of that of the $1$-norm, since the statistical dimension $\delta(f,\vct{x}_0)$ does not only depend on the sparsity pattern of $\mtx{D}\vct{x}_0$, but also on the location of the support. 
\end{example}

\subsection{Scope and limits of reduction}\label{sec:intro-universality}
The condition bounds in Theorem~\ref{thm:B} naturally lead to the question of how to compute or bound the condition number of a random projection of a matrix,
\begin{equation*}
 \kappa(\mtx{P}_m\mtx{QA}) \quad \text{ or } \quad \mathcal{R}_C(\mtx{P}_m\mtx{Q}\mtx{A})
\end{equation*}
where $\mtx{Q}\in O(n)$ is a random orthogonal matrix.
If $m=\lfloor\rho n\rfloor$ with $\rho\in (0,1)$, then in some cases the condition number $\kappa(\mtx{P}_m\mtx{QA})$ remains bounded with high probability as $n\to \infty$. Below we sketch how such condition numbers can be bounded. 

In what follows, let $\mtx{A}\in \R^{n\times n}$ be fixed and non-singular, and we write $\mtx{Q}_m=\mtx{P}_m\mtx{Q}$ for a random matrix with orthogonal rows, uniformly distributed on the Stiefel manifold.
We first reduce to the case of Gaussian matrices, for which tools are readily available. 
If $\mtx{G}\sim N(\zerovct,\onemtx)$ is an $m\times n$ random matrix with Gaussian entries, then $\mtx{Q}_m=(\mtx{G}\mtx{G}^T)^{-1/2}\mtx{G}$ is uniformly distributed on the Stiefel manifold, so that $\mathcal{R}_C(\mtx{Q}_m\mtx{A})$ has the same distribution as $\mathcal{R}_C((\mtx{G}\mtx{G}^T)^{-1/2}\mtx{G}\mtx{A})$. Using Lemma~\ref{le:product_bound}, we can bound (with probability one)
\begin{equation*}
  \mathcal{R}_C\left((\mtx{G}\mtx{G}^T)^{-1/2}\mtx{G}\mtx{A}\right) \leq \kappa\left((\mtx{G}\mtx{G}^T)^{-1/2}\right) \mathcal{R}_C\left(\mtx{G}\mtx{A}\right) =  \kappa(\mtx{G}) \mathcal{R}_C\left(\mtx{G}\mtx{A}\right),
\end{equation*}
transforming the problem into one in which the orthogonal matrix is replaced with a Gaussian one. The are different ways to estimate such condition numbers, the approach taken here is based on Gordon's inequality. We restrict the analysis to the classical matrix condition number, a more refined analysis using Renegar's condition number is likely to incorporate the Gaussian width of the cone. Moreover, using the invariance of the condition number under transposition, we consider $\kappa(\mtx{A}\mtx{G})$ with a $n\times m$ matrix $\mtx{G}$, $m\leq n$. 
An alternative, suggested by Armin Eftekhari, would be to appeal to the Hanson-Wright inequality~\cite{rudelson2013hanson,armin}, or more directly, the Bernstein inequality. 

\begin{proposition}
Let $\mtx{A}\in \R^{n\times n}$ and $\mtx{G}\in \R^{n\times m}$, with $m\leq n$. Then
\begin{equation}
  \Expect[\kappa(\mtx{\mtx{AG}})] \leq \frac{\norm{\mtx{A}}_F+\sqrt{m}\norm{\mtx{A}}_2}{\norm{\vct{A}}_F-\sqrt{m}\norm{\vct{A}}_2}
\end{equation}
whenever $\|\mtx{A}\|_F\geq \sqrt{m}\|\mtx{A}\|_2$.
\end{proposition}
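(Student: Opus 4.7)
The plan is to deploy Gordon's Gaussian comparison inequality (together with its max--max consequence, Slepian's lemma) in a symmetric way to control both extremal singular values of $\mtx{A}\mtx{G}$, and then to combine the resulting one-sided mean estimates into the stated ratio bound. First I would write
\[ \sigma_{\max}(\mtx{A}\mtx{G}) = \max_{\vct{u},\vct{v}} \vct{u}^T\mtx{A}\mtx{G}\vct{v} \quad\text{and}\quad \sigma_{\min}(\mtx{A}\mtx{G}) = \min_{\vct{v}}\max_{\vct{u}} \vct{u}^T\mtx{A}\mtx{G}\vct{v} \]
as Gaussian min--max problems over $(\vct{u},\vct{v})\in S^{n-1}\times S^{m-1}$. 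The centred Gaussian process $Z_{\vct{v},\vct{u}} = \vct{u}^T\mtx{A}\mtx{G}\vct{v}$ then has covariance $\Expect[Z_{\vct{v},\vct{u}}Z_{\vct{v}',\vct{u}'}] = (\vct{u}^T\mtx{A}\mtx{A}^T\vct{u}')(\vct{v}^T\vct{v}')$.

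The comparison process I would introduce is
\[ W_{\vct{v},\vct{u}} = \vct{g}^T\mtx{A}^T\vct{u} + \norm{\mtx{A}}_2\,\vct{h}^T\vct{v}, \]
with $\vct{g}\in\R^n$, $\vct{h}\in\R^m$ independent standard Gaussians. A direct computation yields $\Expect[|W_{\vct{v},\vct{u}}-W_{\vct{v},\vct{u}'}|^2] = \Expect[|Z_{\vct{v},\vct{u}}-Z_{\vct{v},\vct{u}'}|^2] = \norm{\mtx{A}^T(\vct{u}-\vct{u}')}^2$ for fixed $\vct{v}$, and for $\vct{v}\neq\vct{v}'$
\[ \Expect[|W_{\vct{v},\vct{u}}-W_{\vct{v}',\vct{u}'}|^2] - \Expect[|Z_{\vct{v},\vct{u}}-Z_{\vct{v}',\vct{u}'}|^2] = 2(1-\vct{v}^T\vct{v}')(\norm{\mtx{A}}_2^2 - \vct{u}^T\mtx{A}\mtx{A}^T\vct{u}') \geq 0, \]
by Cauchy--Schwarz. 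These are precisely the hypotheses of Slepian's lemma (for the max--max) and of Gordon's inequality (for the min--max). Slepian then produces $\Expect[\sigma_{\max}(\mtx{A}\mtx{G})]\leq \Expect[\norm{\mtx{A}\vct{g}}] + \norm{\mtx{A}}_2\Expect[\norm{\vct{h}}] \leq \fnorm{\mtx{A}} + \sqrt{m}\norm{\mtx{A}}_2$, using Jensen and $\Expect[\norm{\vct{h}}]\leq\sqrt{m}$. Gordon symmetrically gives $\Expect[\sigma_{\min}(\mtx{A}\mtx{G})]\geq \Expect[\norm{\mtx{A}\vct{g}}] - \norm{\mtx{A}}_2\Expect[\norm{\vct{h}}]$, and since $\vct{g}\mapsto\norm{\mtx{A}\vct{g}}$ is $\norm{\mtx{A}}_2$-Lipschitz, the Gaussian Poincar\'e inequality yields $\Expect[\norm{\mtx{A}\vct{g}}]\geq\sqrt{\fnormsq{\mtx{A}}-\norm{\mtx{A}}_2^2}\geq \fnorm{\mtx{A}}-\norm{\mtx{A}}_2$, matching the advertised denominator up to a harmless lower-order correction.

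The main obstacle is the final step, since $\Expect[\sigma_{\max}/\sigma_{\min}]$ is not in general $\Expect[\sigma_{\max}]/\Expect[\sigma_{\min}]$, so the ratio bound does not follow mechanically from the two mean estimates. I would handle this by upgrading the Gordon and Slepian mean bounds to sub-Gaussian tail bounds via the concentration of Lipschitz functions of Gaussians: both $\sigma_{\max}(\mtx{A}\mtx{G})$ and $\sigma_{\min}(\mtx{A}\mtx{G})$ are $\norm{\mtx{A}}_2$-Lipschitz in $\mtx{G}$ (in Frobenius norm), so they concentrate around the values $\fnorm{\mtx{A}} + \sqrt{m}\norm{\mtx{A}}_2$ and $\fnorm{\mtx{A}} - \sqrt{m}\norm{\mtx{A}}_2$ respectively with Gaussian tails of width $\norm{\mtx{A}}_2$. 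Integrating these tail bounds against the distribution of $\mtx{G}$, on the event where the denominator is bounded below by a positive multiple of $\fnorm{\mtx{A}} - \sqrt{m}\norm{\mtx{A}}_2$, yields the claimed expectation bound on $\kappa(\mtx{A}\mtx{G})$.
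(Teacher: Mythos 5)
Your comparison argument is, at its core, the same as the paper's: both use a Slepian/Gordon pair of Gaussian comparisons to obtain $\Expect[\sigma_{\max}(\mtx{AG})]\leq \fnorm{\mtx{A}}+\sqrt{m}\,\norm{\mtx{A}}_2$ and $\Expect[\sigma_{\min}(\mtx{AG})]\geq \fnorm{\mtx{A}}-\sqrt{m}\,\norm{\mtx{A}}_2$. The only cosmetic difference is that the paper first reduces to a diagonal $\mtx{\Sigma}$ normalized so that $\norm{\mtx{\Sigma}}_2=1$ and compares against $Y_{\vct x,\vct y}=\ip{\vct g}{\vct x}+\ip{\vct h}{\mtx{\Sigma}\vct y}$, whereas you keep $\mtx{A}$ general and absorb $\norm{\mtx{A}}_2$ into the comparison process; your covariance computation is correct and the Gordon/Slepian hypotheses are verified in the same way. (You are also slightly more careful than the paper about $\Expect\norm{\mtx{A}\vct g}$ versus $\fnorm{\mtx{A}}$, at the price of a lower-order loss in the denominator.)

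Where you genuinely depart from the paper is in flagging that a bound on $\Expect[\sigma_{\max}/\sigma_{\min}]$ does not follow from the two mean estimates. You are right that this is a real gap --- and it is a gap in the paper as well: the paper's proof stops after the two one-sided mean bounds and never returns to the ratio, deferring to an unproven remark about concentration. However, your proposed repair does not close it either. Lipschitz concentration controls the deviation of $\sigma_{\min}(\mtx{AG})$ at scale $\norm{\mtx{A}}_2$, but on the complementary event where $\sigma_{\min}(\mtx{AG})$ is very small the ratio $\kappa(\mtx{AG})$ blows up, and bounding the contribution of that event to $\Expect[\kappa(\mtx{AG})]$ requires a small-ball (anti-concentration) estimate for $\sigma_{\min}$ near zero, not a sub-Gaussian upper-deviation bound; even granting such an estimate, conditioning on the good event and integrating tails yields the stated ratio only up to constants, not verbatim. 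Note also that for $m=n$ the right-hand side is non-positive (since $\fnorm{\mtx{A}}\leq\sqrt{n}\norm{\mtx{A}}_2$) while $\Expect[1/\sigma_{\min}(\mtx{AG})]=\infty$ for square Gaussian matrices, so the displayed inequality is only meaningful for $\fnorm{\mtx{A}}>\sqrt{m}\norm{\mtx{A}}_2$, and even there the honest conclusion of both your argument and the paper's is the pair of singular-value mean bounds (or a high-probability statement about $\kappa(\mtx{AG})$), rather than the stated bound on its expectation.
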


Using a standard procedure one can show that the singular value and the norm will stay close to their expected values with high probability.
More specifically, one can use the above proposition as a basis for a weak average-case analysis of Renegar's condition number for random matrices of the form $\mtx{AG}$, as in~\cite{amelunxen2017average}.

\begin{proof}
We will derive the inequalities
\begin{equation*}
 \norm{\mtx{A}}_F -\sqrt{m}\norm{\mtx{A}}_2 \leq \Expect[\sigma(\mtx{AG})] \leq \Expect[\norm{\mtx{AG}}_2] \leq \norm{\mtx{A}}_F +\sqrt{m}\norm{\mtx{A}}_2.
\end{equation*}
where $\sigma$ denotes the smallest singular value. We will restrict to showing the lower bound, the upper bound follows similarly by using Slepian's inequality. Without lack of generality assume $\mtx{A}=\mtx{\Sigma}$ is diagonal, with entries $\sigma_1\geq \dots\geq \sigma_n$ on the diagonal, and assume $\sigma_1=1$. Define the Gaussian processes
\begin{equation*}
  X_{\vct{x},\vct{y}} = \ip{\mtx{G}\mtx{x}}{\mtx{\Sigma}\vct{y}}, \quad Y_{\vct{x},\vct{y}} = \ip{\vct{g}}{\vct{x}}+\ip{\vct{h}}{\mtx{\Sigma}\vct{y}},
\end{equation*}  
indexed by $\vct{x}\in S^{m-1}$, $\vct{y}\in S^{n-1}$, with $\mtx{g}\in \R^m$ and $\vct{h}\in \R^n$ Gaussian vectors. We get
\begin{align*}
  \Expect[(X_{\vct{x},\vct{y}} - X_{\vct{x}',\vct{y}'})^2] &= \norm{\vct{\Sigma y}}^2+\norm{\vct{\Sigma y}'}^2 -2\ip{\vct{x}}{\vct{x}'}\ip{\vct{\Sigma y}}{\vct{\Sigma y}'},\\ 
  \Expect[(Y_{\vct{x},\vct{y}} - Y_{\vct{x}',\vct{y}'})^2] &= \norm{\vct{\Sigma y}}^2+\norm{\vct{\Sigma y}'}^2 +2-2\ip{\vct{x}}{\vct{x}'}-2\ip{\mtx{\Sigma}\vct{y}}{\mtx{\Sigma}\vct{y}'},
\end{align*}
so that
\begin{align*}
  \Expect[(Y_{\vct{x},\vct{y}} - Y_{\vct{x}',\vct{y}'})^2] & - \Expect[(X_{\vct{x},\vct{y}} - X_{\vct{x}',\vct{y}'})^2] \\
  & = 2(1-\ip{\vct{x}}{\vct{x}'})(1-\ip{\vct{\Sigma y}}{\vct{\Sigma y}'})\\
  & \geq 0.
\end{align*}
This expression is $0$ if $\vct{x}=\vct{x}'$, and non-negative otherwise, since by assumption $\mtx{\Sigma}$ has largest entry equal to $1$. We can therefore apply Gordon's Theorem (see Section~\cite[9.2]{FR:13} or \cite[Theorem B.1]{amelunxen2014gordon}) to infer an inequality
\begin{align*}
  \Expect[\sigma(\mtx{\Sigma G})] & = \Expect[\min_{\vct{x}\in S^{m-1}}\max_{\vct{y}\in S^{n-1}} \ip{\vct{G x}}{\vct{\Sigma y}}]\\
   &= \Expect[\min_{\vct{x}\in S^{m-1}}\max_{\vct{y}\in S^{n-1}} X_{\vct{x},\vct{y}}]\\
  &\geq \Expect[\min_{\vct{x}\in S^{m-1}}\max_{\vct{y}\in S^{n-1}} Y_{\vct{x},\vct{y}}]\\
  & = \norm{\mtx{\Sigma}}_F - \sqrt{m}. 
\end{align*}
In general, if $\sigma_1\neq 1$, we replace $\mtx{\Sigma}$ by $\mtx{\Sigma}/\norm{\mtx{\Sigma}}_2=\mtx{\Sigma}/\norm{\mtx{A}}_2$, and obtain the desired bound.
\end{proof}

It would be interesting to characterize those matrices $\mtx{A}$ for which $\kappa(\mtx{P}_m\mtx{Q}\mtx{A})\approx 1$ using a kind of restricted isometry property, as for example in~\cite{oymak2015isometric}.
We leave a detailed discussion of the probability distribution of $\kappa(\mtx{P}_m\mtx{QA})$ and its ramifications for another occasion, and instead consider a special case.

\begin{example}
Consider again the matrix $\mtx{D}$ from Example~\ref{ex:tv}. For $\rho \in \{0.1,0.2,0.3,0.4\}$ and $n$ ranging from $1$ to $400$, $m=\lfloor \rho n\rfloor$, we plot the average condition number $\kappa(\mtx{DG})$, where $\mtx{G}\in \R^{n\times m}$ is a Gaussian random matrix. As $n$ increases, this condition number appears to converge to a constant value. We also plot the condition number $\kappa(\mtx{D}^{-1}\mtx{G})$, where $\mtx{D}^{-1}$ is the upper triangular matrix with non-zero entries $-1$. The different decay of the singular values leads to condition number that increase with $n$.

\begin{figure}[h!]
\centering
\includegraphics[width=0.45\textwidth]{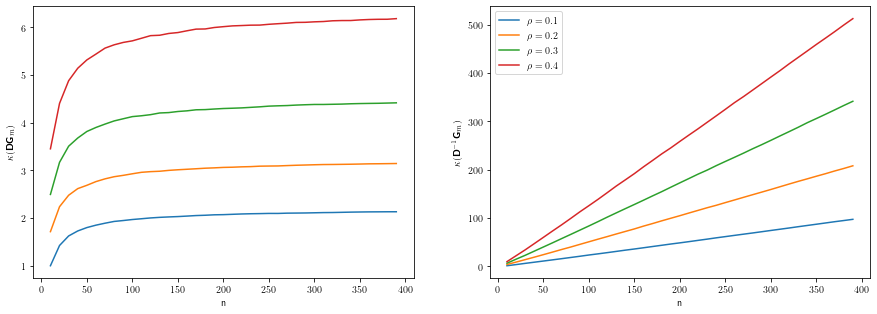}
\caption{Condition number $\kappa(\mtx{G}_m\mtx{D})$ for the matrix $\mtx{D}$ from Example~\ref{ex:tv}, and for its inverse. $\mtx{G}_m$ is the projection to the first $m=\lfloor \rho n\rfloor$ coordinates of a Gaussian $n\times n$ matrix $\mtx{G}$}
\end{figure}

As we saw in Example~\ref{ex:tv}, the operator norm of $\mtx{D}$ is bounded by $\norm{\vct{\sigma}}_\infty \leq 2$. 
The Frobenius norm, on the other hand, is easily seen to be $\|\mtx{D}\|_F=\|\sigma\|_2 = \sqrt{2n-1}$. 
Setting $m=\rho n$, the condition number thus concentrates on a value bounded by
\begin{equation*}
  \frac{\sqrt{2n-1}+2\sqrt{m}}{\sqrt{2n-1}-2\sqrt{m}} \approx \frac{1+\sqrt{2\rho}}{1-\sqrt{2\rho}},
\end{equation*}
which is sensible if $\rho<1/2$. We remark that, by construction, the bounds are not sharp, and also do not apply to the inverse $\mtx{D}^{-1}$.
\end{example}

\subsubsection{A note on applicability}
The previous discussion has shown that the condition number bounds need only consider the restricted condition number of a random projection of a matrix, rather than the full matrix condition. However, as the bounds are multiplicative, even small values (for example, $2$) lead to bounds for the the statistical dimension of the transformed cone that may not be practical. In addition, the statistical dimension of the reference cone also determines how small the projected dimension $m$ is allowed to become, further limiting the amount of potential reduction in condition. If, for example, $C$ is the descent cone of the $\ell_1$-norm, then the resulting bounds can only be used for the descent cones of the $\ell_1$-norm at very sparse vectors. The same applies when considering, instead of the difference matrix $\mtx{D}$ and its inverse, diagonal matrices with various forms of decay in the entries (this corresponds to a version of weighted $\ell_1$ recovery). In these cases, the expected condition of the randomly projected matrices can be improve dramatically, but still not enough to give non-trivial bounds across all sparsity levels.
This limitation is inherent to the notion of condition number:  Condition bounds are, by definition, pessimistic. In numerical analysis, they measure the worst case sensitivity of a problem to perturbations in the input. As such, it would be unrealistic to expect condition bounds to be able to accurately locate the statistical dimension of the descent cone of a composite regularizer, unless the matrix $\mtx{D}$ involved is close to orthogonal. 

\subsubsection{A note on distributions}
The results presented are based on integral geometry, and as such depend crucially on $\mtx{Q}$ being uniformly distributed in the orthogonal group with the Haar measure. By known universality results~\cite{oymak2015universality}, the results are likely to carry over to other distributions. In the context of this paper, however, we are neither interested in actually preconditioning the matrices involved, nor are we using them as a model for observation or measurement matrices as is common in compressive sensing. The randomization here is merely a technical tool to improve bounds based on the condition number, and the question of whether this is a ``realistic'' distribution is of no concern.

\subsection{Organisation of the paper}
In Section~\ref{sec:con-res-lin-op} we introduce the setting of conically restricted linear operators, the biconic feasibility problem, and Renegar's condition number in some detail. The characterization of this condition number in the generality presented here is new and of independent interest. Section~\ref{sec:lin_imag} derives the main condition bound. In Section~\ref{sec:conic-integral-geometry} we change the scene and give a brief overview of conic integral geometry, culminating in a proof of Theorem~\ref{thm:B} in Section~\ref{sec:improved}. Finally, in Section~\ref{sec:applications} we translate the results to the setting of convex regularizers. Appendix A presents some more details on the biconic feasibility problem, while Appendix B presents a general version of Gordon's inequality. While this version is more general than what is needed in this paper, it may be of independent interest.

\section{Conically restricted linear operators}\label{sec:con-res-lin-op}
In this section we discuss the restriction of a linear operator to closed convex cones and discuss Renegar's condition number in some detail.

\subsection{Restricted norm and restricted singular value}\label{sec:renegar}
Before discussing conically restricted operators in more detail, we record the following simple but useful lemma, which generalizes the relation $\ker\vct A=(\ima \vct A^T)^\bot$.

\begin{lemma}\label{lem:A^(-1)(D^*)}
Let $D\subseteq\IR^m$ be a closed convex cone. Then the polar cone is the inverse image of the origin under the projection map, $D^\polar := \{\vct z\in\IR^m\mid \langle \vct y,\vct z\rangle \leq 0 \text{ for all } \vct y\in D\} = \Proj_D^{-1}(\vct0)$. Furthermore, if $\vct A\in\IR^{m\times n}$, then
\begin{equation}\label{eq:A^(-1)(D^*)}
  \vct A^{-1}(D^\polar) = \big( \vct A^T D\big)^\polar ,
\end{equation}
where $\vct A^{-1}(D^\polar)=\{\vct x\in\IR^n\mid \vct{Ax}\in D^\polar\}$ denotes the inverse image of~$D^\polar$ under~$\vct A$.
\end{lemma}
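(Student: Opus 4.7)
The statement has two parts, and both reduce to careful bookkeeping with inner products plus one application of the variational characterization of the metric projection.

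For the first claim, my plan is to use the first-order optimality condition for projection onto a closed convex set: $\Proj_D(\vct{z})=\vct{y}$ holds iff $\vct{y}\in D$ and $\langle \vct{z}-\vct{y},\vct{w}-\vct{y}\rangle\leq 0$ for every $\vct{w}\in D$. Specialising to $\vct{y}=\vct{0}$, which is always in the closed convex cone $D$, this reduces to $\langle \vct{z},\vct{w}\rangle\leq 0$ for all $\vct{w}\in D$, i.e.\ $\vct{z}\in D^{\polar}$. So $\Proj_D(\vct{z})=\vct{0}\Leftrightarrow \vct{z}\in D^\polar$, which is exactly the equality $D^\polar=\Proj_D^{-1}(\vct{0})$. (Alternatively, one can invoke Moreau's decomposition $\vct{z}=\Proj_D(\vct{z})+\Proj_{D^\polar}(\vct{z})$ with orthogonality of the two components, but the direct optimality argument is cleaner and self-contained.)

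For the second claim, I would simply unfold the two polars and swap the matrix across the inner product via the adjoint. Concretely: $\vct{x}\in \vct{A}^{-1}(D^\polar)$ iff $\vct{A}\vct{x}\in D^\polar$, iff $\langle \vct{A}\vct{x},\vct{y}\rangle\leq 0$ for every $\vct{y}\in D$. Rewriting $\langle \vct{A}\vct{x},\vct{y}\rangle=\langle \vct{x},\vct{A}^T\vct{y}\rangle$ turns this into $\langle \vct{x},\vct{z}\rangle\leq 0$ for every $\vct{z}$ in the set $\vct{A}^T D:=\{\vct{A}^T\vct{y}\mid \vct{y}\in D\}$. This is exactly membership in $(\vct{A}^T D)^\polar$, noting that the polar of a set coincides with the polar of the cone it generates, so whether one views $\vct{A}^T D$ as a set or as a convex cone is immaterial. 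The two implications are manifestly reversible, giving the desired equality.

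There is no real obstacle here; the lemma is a clean definitional identity made possible by the fact that $\vct{0}$ is a canonical base point in any cone. I would present the proof as two short paragraphs, stated in the order the lemma asserts them, and remark briefly that the first part is a cone-specific strengthening of the general fact that $\Proj_D^{-1}(\vct{y})=\vct{y}+N_D(\vct{y})$ for closed convex $D$, since for a cone the normal cone at the apex is the polar cone.
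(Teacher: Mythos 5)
Your proposal is correct and takes essentially the same route as the paper: the second part is the identical adjoint-unfolding argument, and the first part differs only in which standard projection fact is invoked (you use the variational inequality $\langle \vct z-\vct0,\vct w-\vct0\rangle\leq 0$ for all $\vct w\in D$, the paper uses $\|\Proj_D(\vct z)\|=\max_{\vct y\in D\cap B^m}\langle \vct z,\vct y\rangle$), both yielding the same one-line conclusion.
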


\begin{proof}
For the first claim, note that $\|\Proj_D(\vct{z})\|=\max_{\vct{y}\in D\cap B^{m}}\ip{\vct{z}}{\vct{y}}$, and $\max_{\vct{y}\in D\cap B^{m}}\ip{\vct{z}}{\vct{y}}=0$ is equivalent to $\ip{\vct{z}}{\vct{y}}\leq 0$ for all $\vct y\in D$, i.e.,~$\vct{z}\in D^{\polar}$.

For~\eqref{eq:A^(-1)(D^*)}, let $\vct x\in\vct A^{-1}(D^\polar)$ and $\vct y\in D$. 
Then $\langle \vct x,\vct A^T\vct y\rangle = \langle \vct{Ax},\vct y\rangle\leq 0$, as $\vct{Ax}\in D^\polar$. Therefore, $\vct A^{-1}(D^\polar)\subseteq (\vct A^TD)^\polar$. On the other hand, if $\vct v\in (\vct A^TD)^\polar$ and $\vct{y}\in D$, then $\langle\vct{Av},\vct y\rangle=\langle\vct v,\vct A^T\vct y\rangle\leq 0$, so that $\vct{Av}\in D^\polar$ and hence, $(\vct A^TD)^\polar\subseteq\vct A^{-1}(D^\polar)$.
\end{proof}

Recall from~\eqref{eq:def-nres,sres} that for $\vct A\in\IR^{m\times n}$, $C\subseteq\IR^n$ and $D\subseteq\IR^m$ closed convex cones, the restricted norm and singular value of~$\vct A$ are defined by
$\nres{A}{C}{D} := \max\{\|\Proj_D(\mtx{A}\vct{x})\|\mid \vct x\in C\cap S^{n-1}\}$ and $\sres{A}{C}{D} := \min\{\|\Proj_D(\mtx{A}\vct{x})\|\mid \vct x\in C\cap S^{n-1}\}$, respectively. The following proposition provides geometric conditions for the vanishing of the restricted norm or singular value.

\begin{proposition}\label{prop:nres=0,sres=0}
Let $\vct A\in\IR^{m\times n}$, $C\subseteq\IR^n$ and $D\subseteq\IR^m$ be closed convex cones. Then the restricted norm vanishes, $\nres{A}{C}{D} = 0$, if and only if $C\subseteq (\vct A^TD)^\polar$. Furthermore, the restricted singular value vanishes, $\sres{A}{C}{D} = 0$, if and only if $C\cap (\vct A^TD)^\polar\neq\{\vct0\}$, which is equivalent to $\vct AC\cap D^\polar\neq\{\vct0\}$ or $\ker\vct A\cap C\neq\{\vct0\}$.
\end{proposition}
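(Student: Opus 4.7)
The proof is a direct unwinding of the definitions combined with Lemma~\ref{lem:A^(-1)(D^*)}, so I would organize it as three short verifications corresponding to the three claims.

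First, for the vanishing of $\nres{A}{C}{D}$: since the restricted norm is a maximum of non-negative quantities, it is zero iff $\Proj_D(\vct{Ax}) = \vct 0$ for every $\vct x\in C$. By the first statement of Lemma~\ref{lem:A^(-1)(D^*)}, this says $\vct{Ax}\in D^{\polar}$ for every $\vct x\in C$, i.e., $C\subseteq \vct A^{-1}(D^{\polar})$. Applying the identity~\eqref{eq:A^(-1)(D^*)} rewrites this as $C\subseteq (\vct A^T D)^{\polar}$, which is exactly the stated characterization.

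Second, for the vanishing of $\sres{A}{C}{D}$: since $C\cap S^{n-1}$ is compact and $\vct x\mapsto\|\Proj_D(\vct{Ax})\|$ is continuous, the minimum is attained, so $\sres{A}{C}{D}=0$ iff there exists $\vct x\in C\cap S^{n-1}$ with $\Proj_D(\vct{Ax})=\vct 0$. As in the first part this reduces via Lemma~\ref{lem:A^(-1)(D^*)} to the existence of a nonzero $\vct x\in C\cap (\vct A^TD)^{\polar}$.

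Finally, I would verify the set-theoretic equivalence $C\cap(\vct A^TD)^{\polar}\neq\{\vct 0\} \iff \vct A C\cap D^{\polar}\neq\{\vct 0\} \text{ or } \ker\vct A\cap C\neq\{\vct 0\}$ by a short case distinction. If $\vct 0\neq \vct x\in C$ satisfies $\vct{Ax}\in D^{\polar}$ (using~\eqref{eq:A^(-1)(D^*)} again to rewrite the polar), then either $\vct{Ax}=\vct 0$, giving $\vct x\in\ker\vct A\cap C\setminus\{\vct 0\}$, or $\vct{Ax}\neq\vct 0$, giving a nonzero element of $\vct AC\cap D^{\polar}$. Conversely, a nonzero element of $\ker\vct A\cap C$ lies in $C\cap (\vct A^TD)^{\polar}$ since $\vct 0\in D^{\polar}$, and if $\vct 0\neq\vct y=\vct{Ax}\in\vct AC\cap D^{\polar}$ for some $\vct x\in C$, then $\vct x\neq \vct 0$ and $\vct x\in C\cap \vct A^{-1}(D^{\polar})=C\cap(\vct A^TD)^{\polar}$.

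There is no real obstacle here; the only thing to be slightly careful about is to always extract a \emph{nonzero} witness in the third part (handling the kernel case separately is what makes the disjunction appear), and to quote Lemma~\ref{lem:A^(-1)(D^*)} so that $\{\vct{Ax}=\vct 0\}$-type conditions are properly converted into polar-cone conditions. Everything else is just rewriting.
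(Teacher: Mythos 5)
Your proposal is correct and follows essentially the same route as the paper's proof: reduce everything to the statement $\Proj_D(\vct{Ax})=\vct 0\iff \vct{Ax}\in D^\polar$ from Lemma~\ref{lem:A^(-1)(D^*)}, apply the identity $\vct A^{-1}(D^\polar)=(\vct A^TD)^\polar$, and split the singular-value case according to whether the witness $\vct x$ lies in $\ker\vct A$. Your write-up is slightly more explicit than the paper's (it spells out both directions of the final disjunction and notes the compactness needed for the minimum to be attained), but there is no substantive difference.
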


\begin{proof}
Using Lemma~\ref{lem:A^(-1)(D^*)} we have $\Proj_D(\vct{Ax})=\vct0$ if and only if $\vct{Ax}\in D^\polar$. This shows $\nres{A}{C}{D} = 0$ if and only if $\vct{Ax}\in D^\polar$ for all $\vct x\in C\cap S^{n-1}$, or equivalently, $C\subseteq \vct A^{-1}(D^\polar)=(\vct A^TD)^\polar$ by~\eqref{eq:A^(-1)(D^*)}. The claim about the restricted singular value follows similarly: $\sres{A}{C}{D} = 0$ if and only if $\vct{Ax}\in D^\polar$ for some $\vct x\in C\cap S^{n-1}$, or equivalently, $C\cap \vct A^{-1}(D^\polar)\neq\{\vct0\}$. If $\vct x\in C\cap \vct A^{-1}(D^\polar)\setminus\{\vct0\}$, then either $\vct{Ax}$ is nonzero or $\vct x$ lies in the kernel of~$\vct A$, which shows the second characterization.
\end{proof}

It is easily seen that the restricted norm is symmetric $\nres{A}{C}{D} = \nrest{A}{D}{C}$,
\begin{align}\label{eq:symm-nres}
\begin{split}
  \nres{A}{C}{D} &= \max_{\vct x\in C\cap B^m}\max_{\vct y\in D\cap B^n} \langle \vct{Ax},
  \vct y\rangle\\
  &= \max_{\vct y\in D\cap B^n}\max_{\vct x\in C\cap B^m} \langle \vct A^T\vct y,\vct x\rangle \\
  &= \nrest{A}{D}{C} .
  \end{split}
\end{align}
Such a relation does not hold in general for the restricted singular value. 
In fact, in Section~\ref{sec:gen-feas-prob} we will see that, unless $C=D=\IR^n$, the minimum of~$\sres{A}{C}{D}$ and~$\srestm{A}{D}{C}$ is always zero, if~$C$ and~$D$ have nonempty interior, cf.~\eqref{eq:min(sres(A,C,D),srestm(A,D,C))=0}. And if~$C$ or~$D$ is a linear subspace then $\srestm{A}{D}{C}=\srest{A}{D}{C}$.

\begin{remark}
In the case $C=\IR^n$, $D=\IR^m$, with $m\geq n$, one can characterize the smallest singular value of~$\vct A$ as the inverse of the norm of the (Moore-Penrose) 
\emph{pseudoinverse} of~$\vct A$:
  \[ \sigma(\vct A) = \|\vct A^\dagger\|^{-1} . \]
Such a characterization does \emph{not} hold in general for the restricted singular value, i.e., in general one cannot write $\sres{A}{C}{D}$ as $\nresdag{A}{D}{C}^{-1}$. 
Consider for example the case $D=\IR^m$ and $C$ a circular cone of angle $\alpha$ around some center $\vct p\in S^{n-1}$. Both cones have nonempty interior, but 
letting $\alpha$ go to zero, it is readily seen that $\sres{A}{C}{D}$ tends to $\|\vct{Ap}\|$, while $\nresdag{A}{D}{C}$ tends to $\|\vct p^T\mtx{A}^\dagger\|$, which is in general 
not equal to~$\|\vct{Ap}\|^{-1}$, unless $\transp{\mtx{A}}\mtx{A}=\Id_n$.
\end{remark}

\subsection{The biconic feasibility problem}\label{sec:gen-feas-prob}
The convex feasibility problem in the setting with two nonzero closed convex cones $C\subseteq\IR^n$, $D\subseteq\IR^m$ is given as:
\\\def\tmpX{3mm}
\begin{align}
   \exists \vct x & \in C\setminus\{\vct0\} \quad\text{s.t.} \hspace{\tmpX} \vct{Ax} \in D^\polar ,
\tag{P}
\label{eq:(P)}\\
   \exists \vct y & \in D\setminus\{\vct0\} \quad\text{s.t.} \hspace{\tmpX} -\vct A^T\vct y\in C^\polar .
\tag{D}
\label{eq:(D)}
\end{align}
Using Lemma~\ref{lem:A^(-1)(D^*)} and Proposition~\ref{prop:nres=0,sres=0} we obtain the following characterizations of the primal feasible matrices $\P(C,D) := \{\vct A\in\IR^{m\times n}\mid \text{(P) is feasible}\}$,
\begin{align}\label{eq:P(C,D)}
\begin{split}
   \P(C,D) & \stackrel{\eqref{eq:A^(-1)(D^*)}}{=} \big\{\vct A\in\IR^{m\times n}\mid C\cap \big(\vct A^T D\big)^\polar\neq\{\vct0\}\big\}\\
   & \stackrel{\text{[Prop.~\ref{prop:nres=0,sres=0}]}}{=} \{\vct A\in\IR^{m\times n}\mid \sres{A}{C}{D}=0 \} .
   \end{split}
\end{align}
By symmetry, we obtain for the dual feasible matrices $\D(C,D) := \{\vct A\in\IR^{m\times n}\mid \text{(D) is feasible}\}$,
\begin{align}\label{eq:D(C,D)}
\begin{split}
   \D(C,D) & = \{\vct A\in\IR^{m\times n}\mid D\cap (-\vct A C)^\polar\neq\{\vct0\}\} \\
   &=  \{\vct A\in\IR^{m\times n}\mid \srestm{A}{D}{C}=0 \} .
   \end{split}
\end{align}
In fact, we will see that $\sres{A}{C}{D}$ and $\srestm{A}{D}{C}$ can be characterized as the distances to $\P(C,D)$ and $\D(C,D)$, respectively. 
We defer the proofs for this section to Appendix~\ref{sec:appendix-convex}.

In the following proposition we collect some general properties of $\P(C,D)$ and $\D(C,D)$.

\begin{proposition}\label{prop:primaldual}
Let $C\subseteq\IR^n$, $D\subseteq\IR^m$ be closed convex cones with nonempty interior. Then
\begin{enumerate}
  \item $\P(C,D)$ and $\D(C,D)$ are closed;
  \item the union of these sets is given by
     \[ \P(C,D)\cup\D(C,D) = \begin{cases} \{\vct A\in\IR^{m\times n}\mid \det\vct A=0\} & C=D=\IR^n \\ \IR^{m\times n} & \text{else} ; \end{cases} \]
\item the intersection $\P(C,D)\cap\D(C,D)$ is nonempty but has Lebesgue measure zero.
\end{enumerate}
\end{proposition}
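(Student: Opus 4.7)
\emph{Closedness} in part~(1) is immediate from~\eqref{eq:P(C,D)} and~\eqref{eq:D(C,D)}: both $\vct A\mapsto\sres{A}{C}{D}$ and $\vct A\mapsto\srestm{A}{D}{C}$ are continuous, being minima over the compact sets $C\cap S^{n-1}$ and $D\cap S^{m-1}$ of the jointly continuous maps $(\vct A,\vct x)\mapsto\|\Proj_D(\vct A\vct x)\|$, so their preimages of~$0$ are closed.

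For the \emph{union} in part~(2), the case $C=D=\R^n$ follows directly since $D^\polar=\{\vct 0\}$ makes both (P) and~(D) read ``$\det\vct A=0$''. In the remaining case, assume WLOG $D\neq\R^m$ (otherwise apply the transposition symmetry between $\P(C,D)$ and $\D(C,D)$). Given $\vct A\notin\P(C,D)$, so $\vct A C\cap D^\polar=\{\vct 0\}$, I would first strengthen this to $\closure{\vct A C}\cap D^\polar=\{\vct 0\}$: if $\vct v=\lim\vct A\vct x_k\in D^\polar\setminus\{\vct 0\}$ with $\vct x_k\in C$, normalize to $\tilde{\vct x}_k=\vct x_k/\|\vct x_k\|\in C\cap S^{n-1}$; boundedness of $\|\vct x_k\|$ would give $\vct v\in\vct AC$ by compactness, while unboundedness would yield a limit $\tilde{\vct x}\in C\cap S^{n-1}$ with $\vct A\tilde{\vct x}=\vct 0\in D^\polar$, contradicting $\vct A\notin\P$. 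Both $\closure{\vct A C}$ and $D^\polar$ are then closed convex cones meeting only at $\vct 0$, and since the interior hypothesis on $D$ rules out $D=\{\vct 0\}$, we have $\vct 0\notin\relint(D^\polar)$, so proper cone separation applies: there exists $\vct y\neq\vct 0$ with $\langle\vct y,\vct A\vct x\rangle\geq 0$ for all $\vct x\in C$ and $\langle\vct y,\vct w\rangle\leq 0$ for all $\vct w\in D^\polar$. By bipolarity the latter gives $\vct y\in D$, and by adjointness the former gives $-\vct A^T\vct y\in C^\polar$, so $\vct A\in\D(C,D)$.

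Part~(3) is the most delicate and where I expect the bulk of the work. First, I would establish the complementary-slackness identity: for any witnesses $\vct x\in C\setminus\{\vct 0\}$, $\vct y\in D\setminus\{\vct 0\}$ of $\vct A\in\P\cap\D$, the inequality $\langle\vct y,\vct A\vct x\rangle\leq 0$ (from $\vct A\vct x\in D^\polar$, $\vct y\in D$) combined with $\langle\vct y,\vct A\vct x\rangle=\langle\vct A^T\vct y,\vct x\rangle=-\langle-\vct A^T\vct y,\vct x\rangle\geq 0$ (from $-\vct A^T\vct y\in C^\polar$, $\vct x\in C$) forces $\langle\vct y,\vct A\vct x\rangle=0$. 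This identity shows that if (P) is \emph{strictly} feasible at~$\vct A$---i.e., there is $\vct x_0\in C\setminus\{\vct 0\}$ with $\vct A\vct x_0\in\inter(D^\polar)\setminus\{\vct 0\}$---then the strict inequality $\langle\vct y,\vct A\vct x_0\rangle<0$ precludes any dual witness, so $\vct A\notin\D$; symmetrically for~(D). Hence $\P\cap\D$ sits inside the ``doubly non-strict'' locus where neither~(P) nor~(D) is strictly feasible, which is a boundary-type set in $\R^{m\times n}$. The main obstacle is to show this locus has Lebesgue measure zero: my plan is to exploit the complementarity equation $\langle\vct y,\vct A\vct x\rangle=0$, which for each fixed pair $(\vct x,\vct y)$ is a codimension-$1$ linear constraint on $\vct A$, and combine it with a coarea / Fubini argument over the compact parameter space $(C\cap S^{n-1})\times(D\cap S^{m-1})$ and the absolute continuity of the Gaussian measure to conclude $\Prob\{\vct G\in\P\cap\D\}=0$.
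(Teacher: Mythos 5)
Parts (1) and (2) of your plan are essentially sound. Part (1) is the paper's argument verbatim. For part (2) you prove the needed separation statement inline where the paper invokes its generalized Farkas lemma (Lemma~\ref{lem:farkas-gen}); your closure argument for $\closure{\vct AC}\cap D^\polar=\{\vct0\}$ is correct, and the proper-separation step goes through because $D^\polar$ is pointed and nonzero when $D\neq\R^m$. One case slips through your reduction, however: if $C=\R^n$ and $D=\R^m$ with $m\neq n$, neither your WLOG nor the transposition symmetry produces a cone different from the full space, and your separation argument needs $D^\polar\neq\{\vct0\}$. This case must be handled directly, as the paper does: for $n>m$ every $\vct A$ has nontrivial kernel, so $\P(C,D)=\R^{m\times n}$; for $n<m$ every $\vct A^T$ does, so $\D(C,D)=\R^{m\times n}$. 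Easy to patch, but currently missing.

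The genuine gap is in part (3). Your complementary-slackness identity $\langle\vct y,\vct A\vct x\rangle=0$ and the consequence that strict primal feasibility excludes dual feasibility are both correct, but they cannot deliver the measure-zero conclusion by the route you propose. For each \emph{fixed} pair $(\vct x,\vct y)$ the set $\{\vct A\mid\langle\vct y,\vct A\vct x\rangle=0\}$ is indeed a hyperplane, but the union of these hyperplanes over the compact parameter space $(C\cap S^{n-1})\times(D\cap S^{m-1})$ is all of $\R^{m\times n}$: for any $\vct A$ one can choose a unit pair with $\vct y\perp\vct A\vct x$. An uncountable union of null sets carries no information, and no Fubini or coarea argument over the pair space can extract a measure-zero conclusion from the complementarity equation alone; the conic membership constraints $\vct A\vct x\in D^\polar$ and $-\vct A^T\vct y\in C^\polar$ must enter in an essential way, and your sketch does not say how. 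The paper's proof instead shows, via Proposition~\ref{prop:nres=0,sres=0} and Farkas, that for full-rank $\vct A$ membership in $\P\cap\D$ forces either $D^\polar$ to \emph{touch} $\vct AC$ (intersect it nontrivially while missing its interior) or $\ker\vct A$ to touch $C$, and then invokes the integral-geometric fact (Lemma~\ref{lem:touch}) that a uniformly rotated cone almost surely does not touch a fixed cone. That lemma is where the actual measure-zero content lives, and your outline has no substitute for it. Finally, you leave the nonemptiness of $\P(C,D)\cap\D(C,D)$ unaddressed; this is easy (for instance $\vct0$ lies in both sets since $C$ and $D$ are nonzero, and the paper argues via irreducibility of $\R^{m\times n}$), but it is part of the claim.
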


Note that from~(2) and the characterizations~\eqref{eq:P(C,D)} and~\eqref{eq:D(C,D)} of $\P(C,D)$ and $\D(C,D)$, respectively, we obtain for every $\vct A\in\IR^{m\times n}$: $\min\{ \sres{A}{C}{D}, \srestm{A}{D}{C}\} = 0$ or, equivalently,
\begin{equation}\label{eq:min(sres(A,C,D),srestm(A,D,C))=0}
  \max\big\{ \sres{A}{C}{D}, \srestm{A}{D}{C}\big\} = \sres{A}{C}{D} + \srestm{A}{D}{C} ,
\end{equation}
unless $C=D=\IR^n$.

In the following we simplify the notation by writing $\P,\D$ instead of $\P(C,D),\D(C,D)$. For the announced interpretation of the restricted singular value as distance to $\P,\D$ we introduce the following notation: for $\vct A\in\IR^{m\times n}$ define
\begin{align*}
   \dist(\vct A,\P) & := \min\{\|\vct\Delta\|\mid \vct A+\vct\Delta\in\P\} , & \dist(\vct A,\D) & := \min\{\|\vct\Delta\|\mid \vct A+\vct\Delta\in\D\},
\end{align*}
where as usual, the norm considered is the operator norm. The proof of the following proposition, given in Appendix~\ref{sec:appendix-convex}, follows along the lines
of similar derivations in the case with a cone and a linear subspace~\cite{BF:09}.

\begin{proposition}\label{prop:sing-dist}
Let $C\subseteq\IR^n$, $D\subseteq\IR^m$ nonzero closed convex cones with nonempty interior. 
Then
\begin{align*}
  \dist(\vct A,\P) & =\sres{A}{C}{D} ,  \\
  \dist(\vct A,\D) & =\srestm{A}{D}{C} .
\end{align*}
\end{proposition}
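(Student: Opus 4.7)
My plan is to prove the two distance identities separately, and in fact reduce the second to the first by transposition. By symmetry in the definitions of $\P$ and $\D$, note that $\vct A\in\D(C,D)$ if and only if $-\vct A^T\in\P(D,C)$, and since the operator norm is invariant under sign change and transposition, $\dist(\vct A,\D(C,D))=\dist(-\vct A^T,\P(D,C))$; similarly, $\srestm{A}{D}{C}=\sres{-A^T}{D}{C}$. So it suffices to establish the first identity $\dist(\vct A,\P)=\sres{A}{C}{D}$, and then apply this identity in the ``swapped'' form to $-\vct A^T$ to obtain the second.

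For the lower bound $\dist(\vct A,\P)\geq \sres{A}{C}{D}$, the key tool is that the metric projection $\Proj_D$ onto the closed convex cone $D$ is $1$-Lipschitz. Pick any $\vct\Delta$ with $\vct A+\vct\Delta\in\P$. By the characterization \eqref{eq:P(C,D)} in terms of $\sres{\cdot}{C}{D}$, there is some $\vct x\in C\cap S^{n-1}$ with $(\vct A+\vct\Delta)\vct x\in D^\polar$, i.e., $\Proj_D((\vct A+\vct\Delta)\vct x)=\vct 0$ by Lemma~\ref{lem:A^(-1)(D^*)}. Then
\[ \sres{A}{C}{D}\leq \|\Proj_D(\vct{Ax})\| = \|\Proj_D(\vct{Ax})-\Proj_D((\vct A+\vct\Delta)\vct x)\| \leq \|\vct\Delta\vct x\| \leq \|\vct\Delta\| . \]
Taking the infimum over admissible $\vct\Delta$ gives the inequality.

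For the upper bound $\dist(\vct A,\P)\leq \sres{A}{C}{D}$ the idea is to write down an explicit rank-one perturbation. By compactness of $C\cap S^{n-1}$ and continuity of $\vct x\mapsto \|\Proj_D(\vct{Ax})\|$ there is a minimizer $\vct x^*\in C\cap S^{n-1}$ achieving $\sres{A}{C}{D}$; set $\vct y^*:=\Proj_D(\vct{Ax}^*)$, so that $\|\vct y^*\|=\sres{A}{C}{D}$ and, by the standard orthogonal decomposition associated with projection onto a closed convex cone, $\vct{Ax}^*-\vct y^*\in D^\polar$. Define the perturbation $\vct\Delta:=-\vct y^*(\vct x^*)^T$. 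Its operator norm equals $\|\vct y^*\|=\sres{A}{C}{D}$, and $(\vct A+\vct\Delta)\vct x^* = \vct{Ax}^*-\vct y^*\in D^\polar$, so $\vct A+\vct\Delta\in\P$ by \eqref{eq:P(C,D)}. This yields $\dist(\vct A,\P)\leq \|\vct\Delta\|=\sres{A}{C}{D}$.

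The main non-obstacle worth flagging explicitly is that projection onto the cone $D$ is \emph{not} linear, so the lower-bound step cannot use a naive identity and must genuinely invoke $1$-Lipschitzness. Beyond that, the argument is essentially the one used in \cite{BF:09} for the linear-subspace case; the cone setting only requires that we keep track of the ``minus sign'' in the dual feasibility condition, which is absorbed by the reduction to $-\vct A^T$ above. The degenerate case $\sres{A}{C}{D}=0$ (so $\vct A\in\P$ already) gives both inequalities trivially, and the construction above still makes sense with $\vct\Delta=\vct 0$.
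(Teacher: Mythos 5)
Your proof is correct. It follows the same overall skeleton as the paper's argument (reduce the second identity to the first via $\vct A\mapsto-\vct A^T$, then prove the two inequalities separately, with a rank-one perturbation witnessing the upper bound), but both halves are implemented differently. For the lower bound, the paper takes an optimal perturbation $\DA$ (implicitly using closedness of $\P$), extracts $\vct y_0\in D\cap B^m$ attaining $\|\Proj_D(\vct{Ax}_0)\|=\ip{\vct{Ax}_0}{\vct y_0}$, and chains inequalities through the condition $(\vct A+\DA)\vct x_0\in D^\polar$; your appeal to the nonexpansiveness of $\Proj_D$ gets the same bound in one line and, since you take an infimum over all admissible $\vct\Delta$, does not require the distance to be attained. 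For the upper bound, the paper works with a saddle point $(\vct x_0,\vct y_0)$ of the min--max representation of $\sres{A}{C}{D}$ and perturbs by $-\vct y_0\vct y_0^T\vct A$, whereas you perturb by $-\vct y^*(\vct x^*)^T$ with $\vct y^*=\Proj_D(\vct{Ax}^*)$ at a minimizer $\vct x^*$, using only the Moreau decomposition $\vct{Ax}^*-\Proj_D(\vct{Ax}^*)\in D^\polar$; both perturbations have operator norm $\sres{A}{C}{D}$ and land in $\P$, but yours avoids the saddle-point optimality argument entirely. The transposition symmetry you invoke is exactly the one the paper uses. No gaps.
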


We finish this section by considering the intersection of~$\P$ and~$\D$, which we denote by
  \[ \Sigma(C,D) := \P(C,D)\cap \D(C,D) , \]
or simply $\Sigma$ when the cones are clear from context.
This set is usually referred to as the set of \emph{ill-posed inputs}.
As shown in Proposition~\ref{prop:primaldual}, the set of ill-posed inputs, assuming~$C\subseteq\IR^n$ and~$D\subseteq\IR^m$ each have nonempty interior, is a nonempty zero volume set. In the special case $C=\IR^n$, $D=\IR^m$,
  \[ \Sigma(\IR^n,\IR^m) = \{\text{rank deficient matrices in } \IR^{m\times n}\} . \]
From~\eqref{eq:min(sres(A,C,D),srestm(A,D,C))=0} and Proposition~\ref{prop:sing-dist} we obtain, if $(C,D)\neq(\IR^n,\IR^n)$,
\begin{align*} 
 \dist(\vct A,\Sigma) & = \max\big\{\dist(\vct A,\P) , \dist(\vct A,\D)\big\} \\
 &= \dist(\vct A,\P) + \dist(\vct A,\D) . 
 \end{align*}

The inverse distance to ill-posedness forms the heart of Renegar's condition number~\cite{rene:94,rene:95b}. We denote
\begin{align}\label{eq:renegars}
\begin{split}
  \RCD{A}{C}{D} &:= \frac{\|\vct A\|}{\dist(\vct A,\Sigma(C,D))} \\
  &= \min\Bigg\{ \frac{\|\vct A\|}{\sres{A}{C}{D}} , \frac{\|\vct A\|}{\srestm{A}{D}{C}} \Bigg\} . 
  \end{split}
\end{align}
Furthermore, we abbreviate the special case $D=\IR^m$, which corresponds to the classical feasibility problem, by the notation
\begin{equation}\label{eq:renegars-single_cone}
  \Ren_C(\vct A) := \RCD{A}{C}{\IR^m} .
\end{equation}
Note that the usual matrix condition number is recovered in the case $C=\IR^n$, $D=\IR^m$,
  \[ \Ren_{\IR^n}(\vct A) = \Ren_{\IR^n,\IR^m}(\vct A) = \kappa(\vct A) . \]
Another simple but useful property is the symmetry $\Ren_{C,D}(\vct A) = \Ren_{D,C}(-\vct A^T)$. Finally, note that the restricted singular value has the following monotonicity properties
\begin{align*}
   C\subseteq C' & \Rightarrow \sres{A}{C}{D}\geq \sres{A}{C'}{D} ,\\
    D\subseteq D' & \Rightarrow \sres{A}{C}{D}\leq \sres{A}{C}{D'} .
\end{align*}
This indicates that not necessarily $\Ren_C(\vct A)\leq\Ren_{C'}(\vct A)$ if $C\subseteq C'$. But in the case $C'=\IR^n$ and $m\geq n$ this inequality does hold, which we formulate in the following lemma.

\begin{lemma}
Let $C\subseteq\IR^n$ closed convex cone with nonempty interior and $\vct A\in\IR^{m\times n}$ with $m\geq n$. Then
\begin{equation}\label{eq:R_C(A)<=kappa(A)}
  \Ren_C(\vct A) \leq \kappa(\vct A) .
\end{equation}
\end{lemma}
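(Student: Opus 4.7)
The plan is to observe that when the target cone is all of $\mathbb{R}^m$, Renegar's condition number is bounded by each of the two terms in the outer minimum, and one of these two terms is essentially the restricted singular value of $\vct A$ on $C$. Since restricting a minimum to a smaller set can only make it larger, this restricted singular value dominates the ordinary smallest singular value $\sigma(\vct A)$, and that comparison is the entire content of the inequality.

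In more detail, I would first simplify the quantity $\sres{A}{C}{\IR^m}$ using that $D = \IR^m$ makes $\Proj_D$ the identity, so
\[
  \sres{A}{C}{\IR^m} \;=\; \min_{\vct x\in C\cap S^{n-1}} \|\vct{Ax}\| \;\geq\; \min_{\vct x\in S^{n-1}} \|\vct{Ax}\| \;=\; \sigma(\vct A),
\]
which uses only $C\cap S^{n-1}\subseteq S^{n-1}$ (and the fact that $C\cap S^{n-1}$ is nonempty since $C$ has nonempty interior, hence is not the zero cone). Next I would invoke the hypothesis $m\geq n$: for any $\vct A\in\IR^{m\times n}$ with $m>n$, the transpose $\vct A^T\colon \IR^m\to\IR^n$ has a nontrivial kernel, so $\sigma(\vct A^T)=0$ and $\|\vct A\|/\sigma(\vct A^T)=\infty$; for $m=n$ one has $\sigma(\vct A^T)=\sigma(\vct A)$. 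Either way, under the stated hypothesis,
\[
  \kappa(\vct A) \;=\; \frac{\|\vct A\|}{\sigma(\vct A)}.
\]

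Finally, combining these two observations with the definition~\eqref{eq:renegars-single_cone},
\[
  \Ren_C(\vct A) \;\leq\; \frac{\|\vct A\|}{\sres{A}{C}{\IR^m}} \;\leq\; \frac{\|\vct A\|}{\sigma(\vct A)} \;=\; \kappa(\vct A),
\]
which is precisely~\eqref{eq:R_C(A)<=kappa(A)}. The second term appearing in the outer minimum of $\Ren_C(\vct A)$, namely $\|\vct A\|/\srestm{A}{\IR^m}{C}$, need not be bounded by $\kappa(\vct A)$ in general, but this is not an obstacle since we only need one of the two terms to dominate the min. There is no hard step: the only thing to be careful about is the asymmetric role of $m$ and $n$ in the definition of $\kappa$, and the convention $1/0=\infty$ which makes the $m>n$ case vacuous.
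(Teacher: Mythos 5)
Your proof is correct, and it reaches the conclusion by the same decisive inequality as the paper, namely the monotonicity $\sres{A}{C}{\IR^m}\geq\sres{A}{\IR^n}{\IR^m}=\sigma(\vct A)$ together with the identification $\kappa(\vct A)=\|\vct A\|/\sigma(\vct A)$ for $m\geq n$ (which the paper leaves implicit and you correctly spell out via the kernel of $\vct A^T$). The one place where your route genuinely diverges is the treatment of the outer minimum: the paper splits into the cases $C=\IR^n$ and $C\neq\IR^n$, and in the latter case invokes the feasibility characterization~\eqref{eq:D(C,D)} to show $\srestm{A}{\IR^m}{C}=0$, i.e.\ that the second branch of the minimum is infinite and hence $\Ren_C(\vct A)$ \emph{equals} $\|\vct A\|/\sres{A}{C}{\IR^m}$. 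You simply observe that a minimum is bounded above by either of its arguments, which makes the case distinction and the polarity argument unnecessary; this is a small but clean simplification, and it also makes the hypothesis that $C$ has nonempty interior essentially superfluous (you only need $C\neq\{\vct 0\}$ so that $C\cap S^{n-1}\neq\emptyset$). The paper's longer route buys the extra information that the minimum is attained in the first branch, but that is not needed for the stated inequality.
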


\begin{proof}
In the case $C=\IR^n$ we have $\Ren_{\IR^n}(\vct A) = \kappa(\vct A)$. If $C\neq\IR^n$ then $\vct AC\neq\IR^m$, as $m\geq n$. It follows that $\IR^m\cap (-\vct AC)^\polar\neq\{\vct0\}$, and thus $\srestm{A}{\IR^m}{C}=0$, cf.~\eqref{eq:D(C,D)}. Hence,
  \[ \Ren_C(\vct A)=\frac{\|\vct A\|}{\sres{A}{C}{\IR^m}}\leq \frac{\|\vct A\|}{\sres{A}{\IR^n}{\IR^m}} = \kappa(\vct A) . \qedhere \]
\end{proof}

To conclude this section, we state a useful bound on the condition number of a product of matrices. 

\begin{lemma}\label{le:product_bound}
Let $\mtx{A}\in \R^{m\times n}$ with $m\leq n$ and let $\mtx{B}\in \R^{m\times m}$ be nonsingular. Then
\begin{equation*}
  \mathcal{R}_{C}(\mtx{B}\mtx{A}) \leq \kappa(\mtx{B})\cdot \mathcal{R}_C(\mtx{A}).
\end{equation*}
\end{lemma}

\begin{proof}
We need to bound the numerator from above and the denominator from below in the definition of Regenar's condition number~\eqref{eq:renegars}. 
For the norms we have $\|\mtx{BA}\|\leq \|\mtx{B}\|\cdot \|\mtx{A}\|$.
If $\sigma_C(\mtx{BA})=\sigma_{\R^m\to C}(-\mtx{A}^T\mtx{B}^T)=0$, then clearly also $\sigma_C(\mtx{A})=\sigma_{\R^m\to C}(-\mtx{A}^T)=0$. 
Assume that $\sigma_C(\mtx{BA})\neq 0$, and let $\vct{x}\in C\cap S^{n-1}$. Since $\mtx{B}$ is non-singular, $\mtx{A}\vct{x}\neq \zerovct$ and set $\vct{z}=\mtx{Ax}/\|\mtx{Ax}\|$. Then
\begin{equation*}
  \|\mtx{BA}\vct{x}\| = \|\mtx{B}\vct{z}\|\cdot \|\mtx{A}\vct{x}\| \geq \sigma(\mtx{B})\cdot \sigma_C(\mtx{A}\vct{x})\neq 0.
\end{equation*}
If $\sigma_{\R^m\to C}(-\mtx{A}^T\mtx{B}^T)\neq 0$, then if $\vct{x}\in S^{m-1}$ and $\vct{z}=\mtx{B}^T\vct{x}/\|\mtx{B}^T\vct{x}\|$, then
\begin{align*}
\|\Proj_C(\mtx{A}^T\mtx{B}^T\vct{x})\| &=\|\Proj_C(\mtx{A}^T\vct{z})\|\cdot \|\mtx{B}^T\vct{x}\| \\
&\geq \sigma(\mtx{B}) \cdot \sigma_{\R^m\to C}(-\mtx{A}^T)\neq 0.
\end{align*}
The condition bound follows.
\end{proof}

\section{Linear images of cones}\label{sec:lin_imag}
The norm of the projection is a special case of a cone-restricted norm:
\begin{equation}\label{eq:normproj-resnorm}
 \|\Proj_C(\vct g)\| = \norm{\mtx{g}}_{\R_+\to C},
\end{equation}
where on the right-hand side we interpret $\mtx{g}\in \R^{n\times 1}$ as linear map from $\R$ to $\R^n$. In this section we relate these norms for linear images of convex cones. The upper bound in Theorem~\ref{thm:moments-res.norm} is a special case of
a more general bound for moment functionals~\cite[Proposition 3.9]{amelunxen2014gordon}.

\begin{theorem}\label{thm:moments-res.norm}
Let $C\subseteq\IR^n$ be a closed convex cone, and
$\nu_r(C) := \Expect[\norm{\Proj_C(\vct{g})}^r]$,
with $\vct{g}\in\IR^n$ Gaussian. Then for $\vct A\in\IR^{p\times n}$, and $r\geq1$,
\begin{equation}\label{eq:mu_r(TC)<=...}
 \nu_r(\mtx{A}C) \leq \Ren_C(\mtx{A})^r\nu_r(C).
\end{equation}
In particular, if $p\geq n$ and $\mtx{A}$ has full rank, then
\begin{equation}\label{eq:mu_r(TC)<=...-kappa}
  \frac{\sdim(C)}{\kappa(\vct A)^2} \leq \sdim(\vct AC) \leq \kappa(\vct A)^2\, \sdim(C).
\end{equation}
\end{theorem}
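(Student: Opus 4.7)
The strategy is to reduce the bound on $\nu_r(\mtx{A}C)$ to one involving $\nu_r(C)$ via two ingredients: a deterministic pointwise inequality linking $\norm{\Proj_{\mtx{A}C}(\vct{g})}$ to $\norm{\Proj_C(\mtx{A}^T\vct{g})}$, and a Gaussian comparison estimating $\Expect[\norm{\Proj_C(\mtx{A}^T\vct{g})}^r]$ in terms of $\nu_r(C)$. Writing the support-function representation $\norm{\Proj_{\mtx{A}C}(\vct{g})}=\sup_{\vct{y}\in\mtx{A}C\cap B^p}\ip{\vct{g}}{\vct{y}}$ and parametrising $\vct{y}=\mtx{A}\vct{x}$ with $\vct{x}\in C$ and $\norm{\mtx{A}\vct{x}}\leq 1$, the restricted-singular-value inequality $\norm{\mtx{A}\vct{x}}\geq \sres{A}{C}{\R^p}\,\norm{\vct{x}}$ (valid for $\vct{x}\in C$ by homogeneity) forces $\norm{\vct{x}}\leq 1/\sres{A}{C}{\R^p}$, yielding the pointwise bound
\[
  \norm{\Proj_{\mtx{A}C}(\vct{g})} \;\leq\; \frac{1}{\sres{A}{C}{\R^p}}\,\norm{\Proj_C(\mtx{A}^T\vct{g})}.
\]

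For the Gaussian comparison, the key observation is that $F(\vct{v}):=\norm{\Proj_C(\vct{v})}^r$ is convex (as the $r$th power of a support function) and positively $r$-homogeneous on $\R^n$. Since $\norm{\mtx{A}}^2\Id-\mtx{A}^T\mtx{A}\psdge\zeromtx$, one may write $\norm{\mtx{A}}\vct{h}\stackrel{d}{=}\mtx{A}^T\vct{g}+\vct{b}$ with $\vct{h}$ a standard Gaussian in $\R^n$ and $\vct{b}$ an independent centered Gaussian. Conditioning on $\mtx{A}^T\vct{g}$ and applying Jensen's inequality yields
\[
  \norm{\mtx{A}}^r\,\nu_r(C) \;=\; \Expect[F(\norm{\mtx{A}}\vct{h})] \;\geq\; \Expect[F(\mtx{A}^T\vct{g})],
\]
and combining the two estimates gives $\nu_r(\mtx{A}C)\leq (\norm{\mtx{A}}/\sres{A}{C}{\R^p})^r\,\nu_r(C)$. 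The symmetric bound with $\srestm{A}{\R^p}{C}$ in place of $\sres{A}{C}{\R^p}$ follows from a dual argument built on the identities $\norm{\Proj_{\mtx{A}C}(\vct{g})}=\dist(\vct{g},(\mtx{A}C)^\polar)$ and $(\mtx{A}C)^\polar=\{\vct{v}:\mtx{A}^T\vct{v}\in C^\polar\}$, combined with the defining inequality $\norm{\Proj_C(-\mtx{A}^T\vct{y})}\geq \srestm{A}{\R^p}{C}\,\norm{\vct{y}}$. Taking the minimum of the two bounds produces the Renegar factor~$\Ren_C(\mtx{A})^r$.

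For the $\kappa$-specialisation with $p\geq n$ and $\mtx{A}$ of full column rank, a thin QR factorisation $\mtx{A}=\mtx{U}\mtx{R}$ (with $\mtx{U}\in\R^{p\times n}$ isometric and $\mtx{R}\in\R^{n\times n}$ invertible) reduces everything to the square case: since $\Proj_{\mtx{U}K}(\vct{g})=\mtx{U}\,\Proj_K(\mtx{U}^T\vct{g})$ and $\mtx{U}^T\vct{g}$ is again standard Gaussian, one has $\nu_r(\mtx{A}C)=\nu_r(\mtx{R}C)$ and $\kappa(\mtx{A})=\kappa(\mtx{R})$. The upper bound then follows from~\eqref{eq:R_C(A)<=kappa(A)}, and the matching lower bound is obtained by applying the already-proved upper bound to $\mtx{R}^{-1}$ and the cone $\mtx{R}C$, using $\mtx{R}^{-1}\mtx{R}C=C$ and $\kappa(\mtx{R}^{-1})=\kappa(\mtx{A})$. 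The main obstacle is the $\srestm$ side of the Renegar bound: a naive dualisation only recovers the weaker factor $\kappa(\mtx{A})^r$, so a careful primal--dual argument is needed. In the situations most relevant to the corollary, however, either $p>n$ or $C\subsetneq\R^n$ forces $\srestm{A}{\R^p}{C}=0$, so the first bound alone already matches $\Ren_C(\mtx{A})$.
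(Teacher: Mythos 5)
Your argument is correct in substance but follows a genuinely different route from the paper's. The paper first proves the set inclusions $\tfrac{1}{\norm{\mtx{A}}}\mtx{A}(C\cap B^n)\subseteq\mtx{A}C\cap B^p\subseteq\tfrac{1}{\lambda}\mtx{A}(C\cap B^n)$ with $\lambda=\max\{\sres{A}{C}{\R^p},\srestm{A}{\R^p}{C}\}$ (Lemma~\ref{lem:TC-1}), and then compares the Gaussian processes $\ip{\vct{g}}{\mtx{A}\vct{x}}$ and $\ip{\vct{g}}{\norm{\mtx{A}}\vct{x}}$ (resp.\ $\ip{\vct{g}}{\sigma(\mtx{A})\vct{x}}$ for the lower bound) via Slepian's inequality. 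You replace Slepian by the observation that $F(\vct{v})=\norm{\Proj_C(\vct{v})}^r$ is convex and positively $r$-homogeneous, so that writing $\norm{\mtx{A}}\vct{h}$ in distribution as $\mtx{A}^T\vct{g}$ plus independent centred Gaussian noise and applying Jensen gives $\Expect[F(\mtx{A}^T\vct{g})]\leq\norm{\mtx{A}}^r\nu_r(C)$ directly; this is a clean, self-contained substitute that avoids the finite-subset discretisation and compactness argument the paper needs to invoke Slepian. You also obtain the lower bound in~\eqref{eq:mu_r(TC)<=...-kappa} by applying the already-proved upper bound to $\mtx{R}^{-1}$ after a thin QR reduction, rather than by a second process comparison; both routes are valid, and yours is arguably more economical and makes the $p>n$ case transparent.

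Two soft spots deserve mention. First, the $\srestm{A}{\R^p}{C}$ side of~\eqref{eq:mu_r(TC)<=...} is only sketched, and your closing remarks suggest you doubt it goes through; in fact it does, and easily: $\srestm{A}{\R^p}{C}>0$ forces $\mtx{A}C=\R^p$, and since $S^{p-1}$ is symmetric one has $\srestm{A}{\R^p}{C}=\min_{\vct{y}\in S^{p-1}}\norm{\Proj_C(\mtx{A}^T\vct{y})}$, whence pointwise $\norm{\Proj_{\mtx{A}C}(\vct{g})}=\norm{\vct{g}}\leq\norm{\Proj_C(\mtx{A}^T\vct{g})}/\srestm{A}{\R^p}{C}$, and the same Jensen step finishes. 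Second, your claim that $C\subsetneq\R^n$ forces $\srestm{A}{\R^p}{C}=0$ is false for general $\mtx{A}$ (a proper cone can map onto all of $\R^p$ under a non-injective map); it is correct under the hypotheses of the second claim ($p\geq n$ and full rank, so $\mtx{A}$ injective), which is all you need there.
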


The proof of Theorem~\ref{thm:moments-res.norm} relies on the following auxiliary result, Lemma~\ref{lem:TC-1}, and on a generalized form of Slepian's inequality, Theorem~\ref{lem:Slepian}.

\begin{lemma}\label{lem:TC-1}
Let $C\subseteq \IR^n$ be a closed convex cone and $\mtx{A}\in \R^{p\times n}$. Then
\begin{equation}
  \frac{1}{\norm{\mtx{A}}} \mtx{A}(C\cap B^{n}) \subseteq \vct AC\cap B^p \,\subseteq\, \tfrac{1}{\lambda}\, \vct A(C\cap B^n) ,
\end{equation}
with $\lambda := \max\big\{\sres{A}{C}{\IR^p},\srestm{A}{\IR^p}{C}\big\}$.
\end{lemma}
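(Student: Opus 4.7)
The first inclusion is immediate: any $\vct x\in C\cap B^n$ satisfies $\|\mtx A\vct x\|\leq\|\mtx A\|$, so $\mtx A\vct x/\|\mtx A\|$ lies in $\mtx AC\cap B^p$. All the content sits in the second inclusion.

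Set $K:=\mtx A(C\cap B^n)$, which is a compact convex subset of $\R^p$ containing the origin. Establishing $\mtx AC\cap B^p\subseteq \tfrac{1}{\lambda}K$ is equivalent to showing that for every $\vct y\in\mtx AC$ with $\|\vct y\|\leq 1$ the rescaled point $\lambda\vct y$ belongs to $K$. I would split the argument according to which of $\sres{A}{C}{\R^p}$ or $\srestm{A}{\R^p}{C}$ realizes the maximum.

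If $\lambda=\sres{A}{C}{\R^p}$, pick any preimage $\vct x\in C$ of $\vct y$; by definition of the restricted singular value $\|\vct y\|=\|\mtx A\vct x\|\geq\lambda\|\vct x\|$, so $\lambda\vct x\in C\cap B^n$ and $\mtx A(\lambda\vct x)=\lambda\vct y$, placing $\lambda\vct y$ in $K$. If instead $\lambda=\srestm{A}{\R^p}{C}$, I would use the recovery of $K$ from its supporting half-spaces, $K=\{\vct v\in\R^p\mid\ip{\vct v}{\vct w}\leq h_K(\vct w)\text{ for all }\vct w\in\R^p\}$. The standard identity $\max_{\vct x\in C\cap B^n}\ip{\vct v}{\vct x}=\|\Proj_C(\vct v)\|$ yields $h_K(\vct w)=\|\Proj_C(\mtx A^T\vct w)\|$. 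Since the minimum defining $\srestm{A}{\R^p}{C}$ is invariant under $\vct u\mapsto -\vct u$, we get $\|\Proj_C(\mtx A^T\vct w)\|\geq\lambda\|\vct w\|$ for every $\vct w\in\R^p$, and therefore $\ip{\lambda\vct y}{\vct w}\leq\lambda\|\vct y\|\|\vct w\|\leq\lambda\|\vct w\|\leq h_K(\vct w)$, which forces $\lambda\vct y\in K$.

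The structural decision is whether to run a unified gauge/support-duality argument on $K$ or to split into the two cases above. Splitting is cleaner, since it sidesteps bipolar and strong-duality machinery and transparently covers the degenerate case $\lambda=0$ (where the conclusion is vacuous under the convention $1/0=+\infty$). The only mildly technical step is computing the support function $h_K$ and reconciling the sign convention in the definition of $\srestm{A}{\R^p}{C}$.
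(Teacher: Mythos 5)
Your proof is correct and follows essentially the same route as the paper: the easy inclusion by the operator-norm bound, the $\sres{A}{C}{\R^p}$ part by rescaling a preimage, and the $\srestm{A}{\R^p}{C}$ part by showing $\lambda B^p\subseteq \mtx{A}(C\cap B^n)$, which is exactly the inradius identity the paper invokes (your support-function computation $h_K(\vct w)=\|\Proj_C(\mtx A^T\vct w)\|$ just makes that identity explicit). No gaps.
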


\begin{proof}
For the lower inclusion, note that any $\vct{y}\in \frac{\mtx{A}(C\cap B^n)}{\norm{\mtx{A}}}$ can be written as $\vct{y}=\frac{\mtx{A}\vct{x}}{\norm{\mtx{A}}}$, with $\vct{x}\in C\cap B^n$. Since $\norm{\mtx{A}\vct{x}}\leq \norm{\mtx{A}}$, we have
$\vct{y} \in \conv\left\{\zerovct,\frac{\mtx{A}\vct{x}}{\norm{\mtx{A}\vct{x}}}\right\} \subset (\mtx{A}C)\cap B^{p}$.
which was to be shown.

For the upper inclusion, let $\lambda_1:=\sres{A}{C}{\IR^p}$, $\lambda_2:=\srestm{A}{\IR^p}{C}$. We show in two steps that $\vct AC\cap B^p \,\subseteq\, \tfrac{1}{\lambda_1}\, \vct A(C\cap B^n)$ if $\lambda_1>0$ and $\vct AC\cap B^p \,\subseteq\, \tfrac{1}{\lambda_2}\, \vct A(C\cap B^n)$ if $\lambda_2>0$.

(1) Let $\lambda_1>0$. Since $\mtx{A}\vct{C} \cap B^p$ as well as $\vct A(C\cap B^n)$ contain the origin, it suffices to show that $\vct AC\cap S^{p-1}\subseteq \tfrac{1}{\lambda_1}\, \vct A(C\cap B^n)$. Every element in $\vct AC\cap S^{p-1}$ can be written as $\frac{\vct{Ay}_0}{\|\vct{Ay}_0\|}$ for some $\vct y_0\in C\cap S^{n-1}$, and since $\sres{A}{C}{\IR^p}=\min_{\vct y\in C\cap S^{n-1}}\|\vct{Ay}\|\leq \|\vct{Ay}_0\|$, we obtain $\sres{A}{C}{\IR^p}\frac{\vct{Ay}_0}{\|\vct{Ay}_0\|}\in \conv\{\vct0,\vct{Ay}_0\}\subseteq\vct A(C\cap B^n)$. This shows $\vct AC\cap S^{p-1}\subseteq \tfrac{1}{\lambda_1}\, \vct A(C\cap B^n)$.

(2) Let $\lambda_2>0$. Recall from~\eqref{eq:D(C,D)} that $\lambda_2=\srestm{A}{\IR^p}{C}>0$ only if $(\vct AC)^\polar=\{\vct0\}$, i.e., $\vct AC=\IR^p$. Observe that
\begin{align*}
   \srestm{A}{\IR^p}{C} & = \min_{\vct z\in S^{p-1}} \max_{\vct y\in C\cap B^n} \langle \vct{Ay},\vct z\rangle \\
   & = \max\big\{r\geq0\mid r B^p\subseteq \vct A(C\cap B^n)\big\} .
\end{align*}
This shows $B^p \subseteq \tfrac{1}{\lambda_2}\, \vct A(C\cap B^n)$ and thus finishes the proof.
\end{proof}

The following generalization of Slepian's inequality is the special case of a generalized version of Gordon's inequality for Gaussian processes,~\cite[Theorem B.2]{amelunxen2014gordon}, when setting $m=1$ in that theorem.

\begin{theorem}\label{lem:Slepian}
Let $X_{j},Y_{j}$, $j\in \{0,\dots,n\}$, be centered Gaussian random variables, and assume that for all $j,k\geq 0$ we have
\begin{equation*}
  \Expect |X_{j}-X_{k}|^2 \leq \Expect |Y_{j}-Y_{k}|^2.
\end{equation*}
Then for any monotonically increasing convex function $f\colon\IR_+\to\IR$,
\begin{equation}\label{eq:E[f(minmax(0,X_(ij)-X0))]>=...}
  \Expect \max_j f_+(X_{j}-X_0) \leq \Expect \max_j f_+(Y_{j}-Y_0) ,
\end{equation}
where $f_+(x):=f(x)$, if $x\geq0$, and $f_+(x):=f(0)$, if $x\leq0$.
\end{theorem}

\begin{proof}[Proof of Theorem~\ref{thm:moments-res.norm}]
Set $\lambda := \max\big\{\sres{A}{C}{\IR^p},\srestm{A}{\IR^p}{C}\big\}$.
For the upper bound, note that by Lemma~\ref{lem:TC-1} we have
\begin{equation*}
  \Expect[\norm{\Proj_{\mtx{A}C}(\vct{g})}^r] = \Expect\left[\max_{\vct{x}\in \mtx{A}C\cap B^p}\ip{\vct{g}}{\vct{x}}^r\right]\leq \frac{1}{\lambda^r} \Expect\left[\max_{\vct{x}\in C\cap B^n}\ip{\vct{g}}{\mtx{A}\vct{x}}^r\right].
\end{equation*}
Let $\vct{g}$ be a standard Gaussian vector and consider the Gaussian processes $X_{\vct{x}} = \ip{\vct{g}}{\mtx{A}\vct{x}}$ and $Y_{\vct{x}}=\ip{\vct{g}}{\norm{\mtx{A}}\vct{x}}$, indexed by $\vct{x}\in C\cap B^n$. For any $\vct{x},\vct{y}\in C\cap B^n$ we have
\begin{equation*}
\Expect(X_{\vct{x}}-X_{\vct{y}})^2 = \|\mtx{A}\vct{x}-\mtx{A}\vct{y}\|^2 \leq  \norm{\norm{\mtx{A}}\vct{x}-\norm{\mtx{A}}\vct{y}}^2 = \Expect(Y_{\vct{x}}-Y_{\vct{y}})^2,
\end{equation*}
we get $\Expect(X_{\vct{x}}-X_{\vct{y}})^2 \leq \Expect(Y_{\vct{x}}-Y_{\vct{y}})^2$. 
From Theorem~\ref{lem:Slepian} we conclude that for any finite subset $S\subset C\cap B^n$ containing the origin,
\begin{equation*}
  \Expect[\max_{\vct{x}\in S} X_{\vct{x}}^r] \leq \Expect[\max_{\vct{x}\in S} Y_{\vct{x}}^r].
\end{equation*}
By a standard compactness argument (see, e.g.,~\cite[8.6]{FR:13}), this extends to the whole index set $C\cap B^n$, which yields the inequalities
\begin{align*}
 \nu_r(\mtx{A}C) &=  \Expect[\norm{\Proj_{\mtx{A}C}(\vct{g})}^r] \\
 &\leq \frac{1}{\lambda^r} \Expect\left[\max_{\vct{x}\in C\cap B^n}\ip{\vct{g}}{\mtx{A}\vct{x}}^r\right] \\
 & \leq \frac{\norm{\mtx{A}}^r}{\lambda^r} \Expect\left[\max_{\vct{x}\in C\cap B^n}\ip{\vct{g}}{\vct{x}}^r\right] \\
 &= \Ren_C(\mtx{A})^r \nu_r(C).
\end{align*}
The upper bound in terms of the usual matrix condition number follows courtesy of~\eqref{eq:R_C(A)<=kappa(A)}.
The lower bound proceeds along the lines, with the roles of $\norm{\mtx{A}}$ and $\lambda$ reversed. More specifically, from Lemma~\ref{lem:TC-1} we get the inequality
\begin{equation*}
  \Expect[\norm{\Proj_{\mtx{A}C}(\vct{g})}^r] \geq \frac{1}{\norm{\mtx{A}}^r} \Expect\left[\max_{\vct{x}\in C\cap B^{n}}\ip{\vct{g}}{\mtx{A}\vct{x}}^r\right].
\end{equation*}
Define 
\begin{equation*}
  \sigma_{C-C}(\mtx{A}) = \min_{\vct{z}\in S(C-C)} \|\mtx{A}\vct{z}\|,
\end{equation*}
where $S(C-C):=\{(\vct{x}-\vct{y})/\|\vct{x}-\vct{y}\|\mid \vct{x}\in C\cap B^n, \vct{y}\in C\cap B^n, \vct{x}\neq \vct{y}\}$. 
Consider the processes $Y_{\vct{x}} = \ip{\vct{g}}{\mtx{A}\vct{x}}$ and $X_{\vct{x}}=\ip{\vct{g}}{\sigma_{C-C}(\mtx{A})\vct{x}}$ indexed by $\vct{x}\in C\cap B^{n}$. Then for distinct $\vct{x},\vct{y}\in C\cap B^n$,
\begin{align*}
  \Expect(X_{\vct{x}}-X_{\vct{y}})^2 &= \|\sigma_{C-C}(\mtx{A})\vct{x}-\sigma_{C-C}(\mtx{A})\vct{y}\|^2 \\
  &\leq  \norm{\mtx{A}\vct{x}-\mtx{A}\vct{y}}^2 = \Expect(Y_{\vct{x}}-Y_{\vct{y}})^2.
\end{align*}
We can now apply Slepian's inequality as we did for the upper bound, and conclude that
\begin{equation*}
\Expect[\norm{\Proj_{\mtx{A}C}(\vct{g})}^r] \geq \frac{\sigma_{C-C}(\mtx{A})^r}{\norm{\mtx{A}}^r} \nu_r(C).
\end{equation*}
To finish the argument, note that we have $\sigma_{C-C}(\mtx{A})\geq \sigma(\mtx{A})$. 
\end{proof}

\section{Conic integral geometry}\label{sec:conic-integral-geometry}
In this section we use integral geometry to develop the tools needed for deriving a preconditioned bound in Theorem~\ref{thm:B}. A comprehensive treatment of integral geometry can be found in~\cite{SW:08}, while a self-contained treatment in the setting of polyhedral cones, which uses our language, is given in~\cite{amelunxen2017intrinsic}. 

\subsection{Intrinsic volumes}\label{sec:intrinsic-volumes}
The theory of conic integral geometry is based on the \emph{intrinsic volumes} $v_0(C),\ldots,v_n(C)$ of a closed convex cone $C\subseteq\IR^n$. The intrinsic volumes form a discrete probability distribution on $\{0,\ldots,n\}$ that capture statistical properties of the cone~$C$. For a polyhedral cone $C$ and $0\leq k\leq n$, the intrinsic volumes can be defined as
\begin{equation*}
v_k(C) = \Prob\{\Proj_C(\vct{g}) \in \relint(F), \quad \dim F=k\},
\end{equation*}
where $F$ is a face of $C$ and $\relint$ denotes the relative interior.

\begin{example}
 Let $C=L\subseteq \R^n$ be a linear subspace of dimension $i$. Then
 \begin{equation*}
  v_k(C) = \begin{cases}
            1 & \text{ if } k=i,\\
            0 & \text{ if } k\neq i.
           \end{cases}
 \end{equation*}
\end{example}

\begin{example}
 Let $C=\R^n_{\geq 0}$ be the non-negative orthant, i.e., the cone consisting of points with non-negative coordinates. A vector $\vct{x}$ projects orthogonally to a $k$-dimensional face of $C$ if and only if exactly $k$ coordinates are non-positive. By symmetry considerations and the invariance of the Gaussian distribution under permutations of the coordinates, it follows that
 \begin{equation*}
  v_k(\R^n_{\geq 0}) = \binom{n}{k}2^{-n}.
 \end{equation*}
\end{example}

For non-polyhedral closed convex cones, the intrinsic volumes can be defined by polyhedral approximation. To avoid having to explicitly take care of upper summation bounds in many formulas, we use the convention that $v_k(C)=0$ if $C\subseteq \R^n$ and $k>n$ (that this is not just a convention follows from the fact that intrinsic volumes are ``intrinsic'', i.e., not dependent on the dimension of the space in which $C$ lives).

The following important properties of the intrinsic volumes, which are easily verified in the setting of polyhedral cones, will be used frequently:
\begin{itemize}
 \item[(a)] {\bf Orthogonal invariance.} For an orthogonal transformation $\vct Q\in O(n)$,
 \begin{equation*}
  v_k(\vct QC) = v_k(C);
 \end{equation*}
 \item[(b)] {\bf Polarity.} 
 \begin{equation*}
  v_k(C) = v_{n-k}(C^{\polar});
 \end{equation*}
 \item[(c)] {\bf Product rule.} 
 \begin{equation}\label{eq:productrule}
  v_k(C\times D) = \sum_{i+j=k} v_i(C)v_j(D).
 \end{equation}
 In particular, if $D=L$ is a linear subspace of dimension $j$, then $v_{k+j}(C\times L) = v_{k}(C)$.
 \item[(d)] {\bf Gauss-Bonnet.}
 \begin{equation}\label{eq:gaussbonnet}
 \sum_{k=0}^n (-1)^k v_k(C) = \begin{cases}
 0 & \text{ if $C$ is not a linear subspace,}\\
 1 & \text{ else.}
 \end{cases}
 \end{equation}
\end{itemize}

\begin{figure}[h!]
\centering
\includegraphics[width=0.4\textwidth]{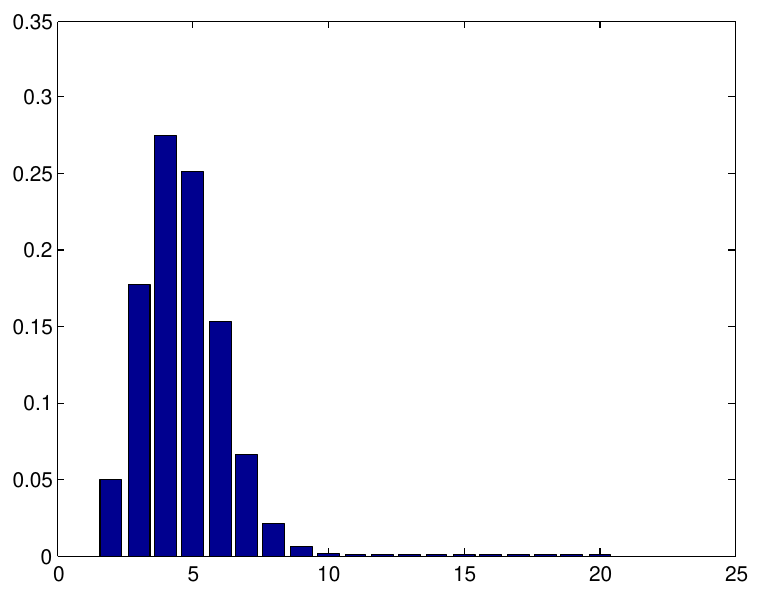}
\caption{Intrinsic volumes of the cone $C=\{\vct{x}\mid x_1\leq \cdots \leq x_n\}$.} 
\end{figure}

\subsection{The statistical dimension}\label{sec:stat-dim}
In what follows it will be convenient to work with reparametrizations of the intrinsic volumes, namely the tail and half-tail functionals
\begin{equation*}
  t_k(C) = \sum_{i\geq 0} v_{k+i}(C), \quad \quad \quad h_k(C) = 2\sum_{i\geq 0 \text{ even}} v_{k+i}(C),
\end{equation*}
which are defined for $0\leq k\leq n$. Adding (or subtracting) the Gauss-Bonnet relation~\eqref{eq:gaussbonnet} to the identity $\sum_{i\geq 0}v_i(C)=1$, we see that $h_0(C)=h_1(C)=1$ if $C$ is not a linear subspace, so that the sequences $2v_0(C),2v_2(C),\dots$ and $2v_1(C),2v_3(C),\dots$ are probability distributions in their own right. Moreover, we have the interleaving property
\begin{equation*}
  t_{i+1}(C) \leq h_i(C) \leq t_{i}(C).
\end{equation*}
The intrinsic volumes can be recovered from the half-tail functionals as
\begin{equation}\label{eq:v-intermsof-h}
 v_i(C) = \begin{cases} \frac{1}{2}(h_i(C)-h_{i+2}(C)) & \text{ for } 0\leq i\leq n-2,\\
  \frac{1}{2}h_{i}(C) &  \text{ else.}
  \end{cases}
\end{equation}
An important summary parameter is the {\em statistical dimension} of a cone $C$, defined as the expected value of the intrinsic volumes considered as probability distribution:
\begin{equation*}
  \delta(C) = \sum_{k=0}^n k v_k(C) =  \frac{1}{2}h_1(C)+\sum_{i\geq 2} h_i(C).
\end{equation*}

The statistical dimension coincides with the expected squared norm of the projection of a Gaussian vector on the cone, $\delta(C) = \Expect\big[\norm{\Proj_C(\vct{g})}^2\big]$. Moreover, it differs from the squared Gaussian width by at most $1$,
\begin{equation*}
  w^2(C) \leq \delta(C) \leq w^2(C)+1,
\end{equation*}
see~\cite[Proposition 10.2]{edge}.

The statistical dimension reduces to the usual dimension for linear subspaces, and also extends various properties of the dimension to closed convex cones $C\subseteq \R^n$:

\begin{itemize}
 \item[(a)] {\bf Orthogonal invariance.} For an orthogonal transformation $\vct Q\in O(n)$,
 \begin{equation*}
  \delta(\vct QC) = \delta(C);
 \end{equation*}
 \item[(b)] {\bf Complementarity.} 
 \begin{equation}\label{eq:complement}
  \delta(C) + \delta(C^{\polar}) = n;
 \end{equation}
 This generalizes the relation $\dim L+\dim L^{\perp} = n$ for a linear subspace $L\subseteq \R^n$.
 \item[(c)] {\bf Additivity.} 
 \begin{equation*}
  \delta(C\times D) = \delta(C)+\delta(D).
 \end{equation*}
 \item[(d)] {\bf Monotonicity.} 
 \begin{equation*}
  \delta(C)\leq \delta(D) \text{ if } C\subseteq D.
 \end{equation*}
\end{itemize}

The analogy with linear subspaces will be taken further when discussing concentration of intrinsic volumes, see Section~\ref{sec:concentration}.

\subsection{The kinematic formulas}\label{sec:kinematic}
The intrinsic volumes allow to study the properties of random intersections of cones via the {\em kinematic formulas}. A self-contained proof of these formulas for polyhedral cones is given in~\cite[Section 5]{amelunxen2017intrinsic}. In what follows, when we say that $\mtx{Q}$ is drawn uniformly at random from the orthogonal group $O(d)$, we mean that it is drawn from the Haar probability measure $\nu$ on $O(n)$. This is the unique regular Borel measure on $O(n)$ that is left and right invariant ($\nu(\vct{Q}A)=\nu(A\vct{Q})=\nu(A)$ for $\vct{Q}\in O(n)$ and a Borel measurable $A\subseteq O(n)$) and satisfies $\nu(O(n))=1$. Moreover, for measurable $f\colon O(n)\to \R_+$, we write
\begin{equation*}
 \Expect_{\vct{Q}\in O(n)}[f(\vct{Q})] := \int_{\vct{Q}\in O(n)} f(\vct{Q}) \ \nu(\diff{\mtx{Q}})
\end{equation*}
for the integral with respect to the Haar probability measure, and we will occasionally omit the subscript $\vct{Q}\in O(n)$, or just write $\mtx{Q}$ in the subscript, when there is no ambiguity. 

\begin{theorem}[Kinematic Formula]\label{thm:kinematic}
Let $C,D\subseteq \R^n$ be polyhedral cones.
Then, for $\vct Q\in O(n)$ uniformly at random, and $k>0$,
\begin{align}
\label{eq:kinematic}
  \Expect[v_k(C\cap \vct QD)] & = v_{k+n}(C\times D) , & \Expect[v_0(C\cap \vct QD)] & = t_0(C\times D) .
\intertext{If $D=L$ is a linear subspace of dimension $n-m$, then}
\label{eq:crofton}
  \Expect[v_k(C\cap \vct QL)] & = v_{k+m}(C) , & \Expect[v_0(C\cap \vct QL)] & =  \sum_{j=0}^m v_j(C) .
\end{align}
\end{theorem}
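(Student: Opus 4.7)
First I would reduce the Crofton formulas~\eqref{eq:crofton} to the Kinematic formula~\eqref{eq:kinematic}. Specializing $D=L$ to an $(n-m)$-dimensional linear subspace, one has $v_j(L)=\delta_{j,n-m}$, so the product rule~\eqref{eq:productrule} immediately gives $v_{k+n}(C\times L)=v_{k+m}(C)$, settling the case $k\geq 1$. The $v_0$ statement drops out of the normalization $\Expect_Q\bigl[\sum_{k\geq 0} v_k(C\cap QL)\bigr]=1$ by subtracting the contributions from $k\geq 1$, leaving $\sum_{j=0}^m v_j(C)$.

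For the Kinematic formula itself, the approach is via the master (Gaussian) characterization of intrinsic volumes: for a polyhedral cone $K\subseteq\IR^n$,
\begin{equation*}
  v_k(K) \;=\; \sum_{F\in\F_k(K)} \alpha(F)\,\beta(F,K),
\end{equation*}
where $\alpha(F)$ is the internal angle (the normalized spherical measure of $F$ inside $\spa F$) and $\beta(F,K)=v_0(N(F,K))$ is the external angle, realized as the Gaussian probability that a vector in $(\spa F)^\perp$ lies in the normal cone $N(F,K)$. Because $N(F,K)\subseteq(\spa F)^\perp$, the two factors are independent Gaussian probabilities obtained by splitting a standard normal $g$ along $\spa F$ and its orthogonal complement.

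The geometric heart of the argument is then the following face and normal-cone decomposition, valid for Lebesgue-a.e.\ $Q\in O(n)$: every $k$-face of $C\cap QD$ is uniquely a transverse intersection $F\cap QG$ with $F\in\F_i(C)$, $G\in\F_j(D)$ and $i+j=k+n$, and the normal cone splits as an internal direct sum
\begin{equation*}
  N(F\cap QG,\, C\cap QD) \;=\; N(F,C)\;\oplus\;Q\,N(G,D).
\end{equation*}
Plugging this into the master formula, taking expectations over $Q$, and decoupling the internal- and external-angle factors via the $O(n)$-invariance of the Haar measure, one arrives at
\begin{equation*}
  \Expect_Q[v_k(C\cap QD)] \;=\; \sum_{i+j=k+n} v_i(C)\,v_j(D) \;=\; v_{k+n}(C\times D),
\end{equation*}
the last equality being the product rule~\eqref{eq:productrule}.

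The main obstacle is the verification of the transverse face decomposition and normal-cone splitting for almost every $Q$: one needs a genericity (measure-zero) argument to discard non-transverse rotations and a direct identification of the external angle of $C\cap QD$ at $F\cap QG$ with $v_0$ of the Minkowski sum $N(F,C)+Q\,N(G,D)$. A further subtlety is the decoupling of the two angle factors under the rotation average; this ultimately rests on the fact that the Haar measure on $O(n)$ pushes forward to the uniform measure on the Grassmannian of subspaces of any fixed dimension, enabling the reduction to a lower-dimensional Crofton computation as a subroutine.
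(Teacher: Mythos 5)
First, a point of reference: the paper does not prove Theorem~\ref{thm:kinematic} at all — it imports it from \cite[Section 5]{amelunxen2015intrinsic} — so your proposal has to stand on its own. The routine parts of your outline are fine: specializing the kinematic formula to $D=L$ via the product rule~\eqref{eq:productrule} does give $v_{k+n}(C\times L)=v_{k+m}(C)$, and recovering the $k=0$ case from the normalization $\sum_{k\geq0}v_k=1$ is the standard device. (In the general case your normalization argument would produce $\Expect[v_0(C\cap\vct QD)]=\sum_{j=0}^{n}v_j(C\times D)=1-t_{n+1}(C\times D)$ rather than the $t_0(C\times D)$ printed in the theorem; since $t_0$ of any cone equals $1$ under the paper's definition, the printed right-hand side is a typo and your version is the correct one.)

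The genuine gap is in the central step. After the face decomposition, the master formula reduces the claim to showing, for each pair $F\in\F_i(C)$, $G\in\F_j(D)$ with $i+j=k+n$, that
\[
\Expect_{\vct Q}\Bigl[\alpha(F\cap\vct QG)\cdot v_0\bigl(N(F,C)+\vct Q N(G,D)\bigr)\Bigr]
=\alpha(F)\,v_0(N(F,C))\cdot\alpha(G)\,v_0(N(G,D)),
\]
and this is not a consequence of Haar invariance or of the push-forward to the Grassmannian. Although the sum $N(F,C)+\vct QN(G,D)$ is generically direct, it is an \emph{oblique} direct sum inside $(\spa(F\cap\vct QG))^{\perp}$, so its Gaussian measure does not factor pointwise into the two external angles (two half-lines meeting at angle $\theta$ span a wedge of Gaussian measure $\theta/2\pi$, which is the product of the individual external angles only when $\theta=\pi/2$); likewise $\alpha(F\cap\vct QG)$ is not determined by $\alpha(F)$ and $\alpha(G)$. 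The product rule you invoke at the end applies to the orthogonal product $C\times D$ and cannot be applied to these oblique sums. The displayed identity — that the factorization holds \emph{on average} over $\vct Q$ — is precisely the content of the spherical kinematic formula at the level of a single pair of faces; classically it is Santal\'o's coarea computation on the double fibration over pairs of subspaces, while the combinatorial treatment the paper cites circumvents it by passing to $C\times D$ intersected with a randomly rotated diagonal subspace, so that only the Crofton (subspace) case needs a direct argument. Your proposal names this step as a ``subtlety'' but supplies no argument for it, and without it the proof is incomplete.
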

Combining Theorem~\ref{thm:kinematic} with the Gauss-Bonnet relation~\eqref{eq:gaussbonnet} yields the so-called {\em Crofton formulas}, which we formulate in the following corollary. The intersection probabilities are also know as Grassmann angles in the literature (see~\cite[2.33]{amelunxen2017intrinsic} for a discussion and references).

\begin{corollary}\label{cor:Croft}
Let $C,D\subseteq \R^{n}$ be polyhedral cones such that not both of $C$ and $D$ are linear subspaces, and let $L\subset \R^n$ be a linear subspace of dimension $n-m$.  Then, for $\vct Q\in O(n)$ uniformly at random,
\begin{align*}
  \Prob\{C\cap \vct QD \neq \zerovct\} &= h_{n+1}(C\times D), &
  \Prob\{C\cap \vct QL \neq \zerovct\} &= h_{m+1}(C).
\end{align*}
\end{corollary}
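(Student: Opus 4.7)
The plan is to combine the kinematic formulas of Theorem~\ref{thm:kinematic} with the Gauss--Bonnet relation~\eqref{eq:gaussbonnet} to extract the intersection probability from the expected intrinsic volumes.

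The key observation is the following. For any closed convex cone $K\subseteq\R^n$, the intrinsic volumes satisfy $\sum_{k\geq 0} v_k(K) = 1$, while Gauss--Bonnet gives $\sum_{k\geq 0}(-1)^k v_k(K) = \mathbf{1}_{\{K\text{ is a linear subspace}\}}$. Subtracting and halving yields
\[
2\sum_{k\geq 0,\,k\text{ odd}} v_k(K) \;=\; 1 - \mathbf{1}_{\{K\text{ is a linear subspace}\}} \;=\; \mathbf{1}_{\{K\text{ is not a linear subspace}\}}.
\]
I apply this with $K = C\cap\vct{Q}D$. Under the hypothesis that not both $C$ and $D$ are linear subspaces, I claim that with probability one $C\cap\vct{Q}D$ fails to be a linear subspace precisely when it is nontrivial; equivalently, the event ``$K$ is a linear subspace of positive dimension'' has $\vct{Q}$-measure zero. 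Granting this, taking expectation over $\vct{Q}$ and invoking the kinematic formula~\eqref{eq:kinematic} for $k\geq 1$ gives
\[
\Prob\{C\cap\vct{Q}D\neq\{\vct{0}\}\} \;=\; 2\sum_{k\geq 1,\,k\text{ odd}} \Expect[v_k(C\cap\vct{Q}D)] \;=\; 2\sum_{k\geq 1,\,k\text{ odd}} v_{k+n}(C\times D).
\]
Reindexing $k = 2j+1$ turns the right-hand side into $2\sum_{j\geq 0} v_{n+1+2j}(C\times D)$, which is $h_{n+1}(C\times D)$ by definition.

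For the second identity, the same scheme applies with the Crofton formula~\eqref{eq:crofton} in place of~\eqref{eq:kinematic}: for $k\geq 1$, $\Expect[v_k(C\cap\vct{Q}L)] = v_{k+m}(C)$, so the odd-index sum collapses to $h_{m+1}(C)$. Alternatively, the product rule~\eqref{eq:productrule} gives $v_{n+1+i}(C\times L) = v_{m+1+i}(C)$ since $L$ has dimension $n-m$, exhibiting the second identity as a direct specialization of the first.

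The main obstacle is the genericity claim that $C\cap\vct{Q}D$ is not a linear subspace of positive dimension for Haar-almost every $\vct{Q}$. Since the lineality part of $K$ is $K\cap(-K) = \lin(C)\cap\vct{Q}\lin(D)$, the cone $K$ is itself a linear subspace iff $C\cap\vct{Q}D\subseteq \lin(C)\cap\vct{Q}\lin(D)$. When at least one of $C,D$ strictly contains its lineality space, this containment imposes algebraic conditions on $\vct{Q}$ of positive codimension in $O(n)$, hence of Haar measure zero; I would flesh this out using the polyhedral face structure of the cones and a standard general-position argument.
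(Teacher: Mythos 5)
Your argument is correct and is exactly the derivation the paper intends: it states the corollary follows by "combining Theorem~\ref{thm:kinematic} with the Gauss--Bonnet relation," i.e., by extracting the odd-index tail $2\sum_{k\text{ odd}}v_k = \mathbf{1}\{\text{not a subspace}\}$ and taking expectations, precisely as you do. The genericity point you flag (that $C\cap\vct{Q}D$ is almost surely not a positive-dimensional linear subspace when not both cones are subspaces) is indeed the only substantive step; it is standard and handled by the general-position arguments in the cited references (cf.\ Lemma~\ref{lem:touch} and \cite[Thm.~6.5.6]{SW:08}).
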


Applying the polarity relation $(C\cap D)^{\circ}=C^{\circ}+D^{\circ}$ (see~\cite[Proposition 2.5]{amelunxen2017intrinsic}) to the kinematic formulas, we obtain a polar version of the kinematic formula, for $k>0$,
\begin{align}\label{eq:kinematic-polar}
  \Expect[v_{n-k}(C+\vct QD)] & = v_{n-k}(C\times D) , & \Expect[v_n(C+\vct QD)] & = t_{n}(C\times D).
\end{align}

A convenient consequence of this polar form is a projection formula for intrinsic volumes, due to Glasauer~\cite{Gl}. Let $\mtx{Q}\in O(n)$ uniform at random and $\mtx{P}\in \R^{n\times n}$ a fixed orthogonal projection onto a linear subspace $L$ of dimension $m$. Then for $0<k\leq m$,
\begin{align}\label{eq:projection} 
\Expect[v_{m-k}(\mtx{PQ}C)] & = v_{m-k}(C),  & \Expect[v_m(\mtx{PQ}C)] & = t_m(C).
\end{align}

As we will see in Section~\ref{sec:tqc}, this results holds for {\em any} full rank $\mtx{T}\in \R^{m\times n}$, instead of just for projections $\mtx{P}$.

\begin{remark} The astute reader may notice that the projection $\mtx{PQ}C$ does not need to be a closed convex cone. For random $\mtx{Q}$, however, the probability of this happening can be shown to be zero.
\end{remark}

\subsection{Concentration of measure}\label{sec:concentration}
It was shown in~\cite{edge} (with a more streamlined and improved derivation in~\cite{mccoy2014steiner}), that the intrinsic volumes concentrate sharply around the statistical dimension.
For a closed convex cone $C$, 
let $X_C$ denote the discrete random variable satisfying 
\begin{equation*}
 \Prob\{X_C=k\} = v_k(C).
\end{equation*}
The following result is from~\cite{mccoy2014steiner}.

\begin{theorem}\label{thm:concentration}
Let $\lambda\geq 0$. Then
\begin{equation*}
  \Prob\{|X_C-\delta(C)|\geq \lambda\} \leq 2\exp\left(\frac{-\lambda^2/4}{\min\{\delta(C),\delta(C^{\circ})\}+\lambda/3}\right).
\end{equation*}
\end{theorem}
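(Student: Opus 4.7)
The plan is to apply the Cram\'er--Chernoff method to the moment generating function of $X_C$,
\[
M_C(\theta) := \Expect[e^{\theta X_C}] = \sum_{k=0}^n v_k(C)\, e^{\theta k},
\]
and then exploit conic polarity to obtain the $\min$ appearing in the variance proxy. Writing $\psi_C(\theta) := \log M_C(\theta) - \theta\,\delta(C)$, Markov's inequality gives, for each $\theta \geq 0$,
\[
\Prob\{X_C - \delta(C) \geq \lambda\} \leq \exp(-\theta\lambda + \psi_C(\theta)),
\]
so the task reduces to establishing a Bernstein-type bound on $\psi_C$ near the origin.

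A first structural observation is the polarity identity $v_k(C) = v_{n-k}(C^{\polar})$, which immediately yields
\[
M_C(\theta) = e^{\theta n}\, M_{C^{\polar}}(-\theta),
\]
and, combined with the complementarity relation $\delta(C) + \delta(C^{\polar}) = n$ from~\eqref{eq:complement}, produces the symmetry $\psi_C(\theta) = \psi_{C^{\polar}}(-\theta)$. Thus any one-sided bound for $X_C$ involving $\delta(C)$ translates, after swapping $C \leftrightarrow C^{\polar}$, into the opposite one-sided bound involving $\delta(C^{\polar})$. Taking the better of the two on each side yields the quantity $\min\{\delta(C), \delta(C^{\polar})\}$, and a union bound over the two tails accounts for the leading factor $2$.

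The main content is therefore a sub-exponential estimate
\[
\psi_C(\theta) \leq \frac{c_1\,\theta^2\,\delta(C)}{1 - c_2\,\theta} \qquad \text{for } 0 \leq \theta < 1/c_2,
\]
with universal constants $c_1,c_2$ matching the $1/4$ and $1/3$ in the statement. The natural route is through the Master Steiner formula: by the polyhedral definition already recorded in Section~\ref{sec:intrinsic-volumes}, $v_k(C)$ is the Gaussian probability that $\Proj_C(\vct g)$ lies in the relative interior of a $k$-dimensional face, so $M_C(\theta)$ can be rewritten as a Gaussian functional of $\vct g$. Because $\vct g \mapsto \Proj_C(\vct g)$ is $1$-Lipschitz and $\Expect[\norm{\Proj_C(\vct g)}^2] = \delta(C) = \Expect[X_C]$, Gaussian Lipschitz concentration provides a sub-exponential MGF bound for $\norm{\Proj_C(\vct g)}^2 - \delta(C)$ with variance proxy proportional to $\delta(C)$. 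This continuous bound is then transferred to the integer-valued $X_C$ through the Master Steiner decomposition. Plugging the resulting estimate on $\psi_C$ into Markov's inequality and optimizing over $\theta$ yields the Bernstein-type tail $\exp(-(\lambda^2/4)/(\delta(C) + \lambda/3))$, and the polarity argument above completes the proof.

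The principal obstacle is this last transfer step: $X_C$ is a discrete face-dimension statistic, whereas Gaussian concentration applies cleanly only to the smooth functional $\norm{\Proj_C(\vct g)}^2$. The delicate point, which must be extracted from the Master Steiner identity, is that the MGFs of these two quantities agree up to multiplicative factors that do not spoil the Bernstein constants $(1/4, 1/3)$. Once this comparison is in hand, everything else---the polarity symmetry, Markov's inequality, and the $\theta$-optimization---is a routine Chernoff calculation.
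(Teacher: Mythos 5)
The paper itself does not prove Theorem~\ref{thm:concentration}: it is quoted verbatim from McCoy and Tropp's paper on the Steiner formula (the reference \texttt{mccoy2014steiner}), so the only meaningful comparison is with that source --- and your outline is essentially their argument: Chernoff bound on the MGF of $X_C$, the polarity symmetry $v_k(C)=v_{n-k}(C^{\polar})$ together with $\delta(C)+\delta(C^{\polar})=n$ to produce the minimum, and a comparison with the Gaussian functional $\normsq{\Proj_C(\vct g)}$ to get a Bernstein-type bound on the log-MGF. One clarification on the step you single out as the principal obstacle: the transfer between the discrete and continuous MGFs is not a matter of controlling stray multiplicative factors; it is an \emph{exact identity under a reparametrization}. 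The Master Steiner formula says that, conditional on $\Proj_C(\vct g)$ landing in the relative interior of a $k$-dimensional face, $\normsq{\Proj_C(\vct g)}$ is chi-squared with $k$ degrees of freedom, so applying it to $f(a)=e^{\xi a}$ gives $\Expect[\exp(\xi\normsq{\Proj_C(\vct g)})]=\sum_k v_k(C)(1-2\xi)^{-k/2}=\Expect[e^{\theta X_C}]$ once one sets $\xi=\tfrac{1}{2}(1-e^{-2\theta})$. Since $\xi\leq\theta$ for $\theta\geq 0$, the sub-exponential MGF bound for $\normsq{\Proj_C(\vct g)}$, obtained from Gaussian concentration of the $1$-Lipschitz map $\vct g\mapsto\norm{\Proj_C(\vct g)}$, passes to $X_C$ with no loss in the constants, and the optimization over $\theta$ is then the routine calculation you describe. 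So your sketch has the right architecture and no wrong turns, but it remains a sketch: the exact MGF identity and the Lipschitz-concentration estimate are the actual substance of the theorem and would need to be written out to constitute a proof.
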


Roughly speaking, the intrinsic volumes of a convex cone in high dimensions approximate those of a linear subspace of dimension $\delta(C)$. The concentration result~\ref{thm:concentration}, used in conjunction with the kinematic formula, gives rise to an approximate kinematic formula, which in turn underlies the phase transition results from~\cite{edge}. We will only need the following direct consequence of Theorem~\ref{thm:concentration}.

\begin{corollary}\label{col:approx-kinematic}
Let $\eta\in (0,1)$, let $C$ be a closed convex cone, and let $0\leq m\leq n$. Then 
 \begin{align*}
 \delta(C) \leq m-a_{\eta}\sqrt{m} \ &\Longrightarrow \ t_{m} \leq \eta;\\
  \delta(C)  \geq m+a_{\eta}\sqrt{m} \ &\Longrightarrow \ t_{m} \geq 1- \eta,
  \end{align*}
  with $a_{\eta}:=2\sqrt{\log(2/\eta)}$. 
\end{corollary}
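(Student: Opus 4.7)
The plan is to obtain both implications as direct tail estimates from the Bernstein-type concentration in Theorem~\ref{thm:concentration}. By definition $t_m(C) = \Prob\{X_C \geq m\}$ and $1 - t_m(C) = \Prob\{X_C \leq m-1\}$, so each implication amounts to controlling a one-sided tail of the integer random variable $X_C$ about its mean $\delta(C)$.

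For the first implication, assuming $\delta(C) \leq m - a_\eta\sqrt{m}$, I would bound
\[
t_m(C) = \Prob\{X_C - \delta(C) \geq m - \delta(C)\} \leq \Prob\{|X_C - \delta(C)| \geq a_\eta\sqrt{m}\},
\]
using $m - \delta(C) \geq a_\eta\sqrt{m}$, and then apply Theorem~\ref{thm:concentration} with $\lambda = a_\eta\sqrt{m}$. The key verification is that the denominator of the exponent is at most $m$: indeed $\min\{\delta(C), \delta(C^\circ)\} \leq \delta(C) \leq m - a_\eta\sqrt{m}$ yields $\min + \lambda/3 \leq m - \tfrac{2}{3}a_\eta\sqrt{m} \leq m$. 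Since $a_\eta^2 = 4\log(2/\eta)$, the exponent is at most $-\log(2/\eta)$, giving $t_m(C) \leq 2\exp(-\log(2/\eta)) = \eta$.

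For the second implication, assuming $\delta(C) \geq m + a_\eta\sqrt{m}$, the cleanest route is to invoke the polarity relation $v_k(C) = v_{n-k}(C^\circ)$, which rewrites the complementary probability as $1 - t_m(C) = t_{n-m+1}(C^\circ)$. Applying the first implication to the polar cone $C^\circ$ at level $m' = n - m + 1$ reduces the claim to verifying $\delta(C^\circ) \leq m' - a_\eta\sqrt{m'}$, which follows from $\delta(C^\circ) = n - \delta(C) \leq n - m - a_\eta\sqrt{m}$. Alternatively one may apply Theorem~\ref{thm:concentration} directly with $\lambda = \delta(C) - m + 1 \geq a_\eta\sqrt{m}$ and use $\min\{\delta(C), \delta(C^\circ)\} \leq \delta(C^\circ)$ to control the denominator on the polar side.

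The main obstacle is the arithmetic bookkeeping around the Bernstein denominator: after fixing $\lambda = a_\eta\sqrt{m}$ (or its analogue on the polar side), one must check that $\min\{\delta(C), \delta(C^\circ)\} + \lambda/3$ is absorbed into a quantity of order $m$ so that the exponent reaches $\log(2/\eta)$. In the upper-tail direction this is transparent, since the hypothesis bounds $\delta(C)$ directly; in the lower-tail direction it is the polar hypothesis $\delta(C^\circ) \leq n - m - a_\eta\sqrt{m}$ that does the work, which is why dualising first streamlines the estimate.
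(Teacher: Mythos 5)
Your treatment of the first implication is correct: taking $\lambda=a_\eta\sqrt{m}$, the hypothesis gives $\min\{\delta(C),\delta(C^\circ)\}+\lambda/3\leq (m-a_\eta\sqrt{m})+\tfrac13 a_\eta\sqrt{m}\leq m$, so the exponent in Theorem~\ref{thm:concentration} is at least $a_\eta^2/4=\log(2/\eta)$ and $t_m\leq\eta$. (This is also the only half of the corollary that the paper actually invokes, in Corollary~\ref{cor:pqc}.)

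The second implication is where your argument has a genuine gap. The polarity identity $1-t_m(C)=t_{n-m+1}(C^\circ)$ is correct, but applying the first implication to $C^\circ$ at level $m'=n-m+1$ requires $\delta(C^\circ)\leq m'-a_\eta\sqrt{m'}$, i.e.\ a gap of size $a_\eta\sqrt{n-m+1}$, whereas the hypothesis only supplies $\delta(C^\circ)\leq (n-m+1)-(1+a_\eta\sqrt{m})$. Bridging the two requires $a_\eta\bigl(\sqrt{n-m+1}-\sqrt{m}\bigr)\leq 1$, which fails whenever $n-m$ is appreciably larger than $m$ (e.g.\ $m=4$, $n=100$ forces $\log(2/\eta)\leq 0.004$, impossible for $\eta<1$). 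The alternative direct route fares no better: in the critical regime $\delta(C)=m+a_\eta\sqrt{m}$ with $m\ll n$ one has $\lambda=\delta(C)-m+1\approx a_\eta\sqrt{m}$ while $\delta(C^\circ)=n-\delta(C)$ is of order $n$, and the other branch $\min\{\delta(C),\delta(C^\circ)\}\leq\delta(C)=m+\lambda-1$ yields the denominator $m+\tfrac43\lambda-1>m$, so the exponent is $\frac{a_\eta^2/4}{1+O(a_\eta/\sqrt{m})}<\log(2/\eta)$ and the resulting bound strictly exceeds $\eta$. The asymmetry is structural: in the upper tail the hypothesis places $\delta(C)$ at $m-\lambda$, so the Bernstein correction $+\lambda/3$ is absorbed, while in the lower tail $\delta(C)$ sits at $m+\lambda-1$ and the correction works against you. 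Closing this requires either a one-sided lower-tail refinement of Theorem~\ref{thm:concentration} from~\cite{mccoy2014steiner}, or a slightly strengthened hypothesis (e.g.\ $\delta(C)\geq m+a_\eta\sqrt{m}+\tfrac23 a_\eta^2$); as written, neither of your two routes delivers the stated conclusion.
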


Applying the above to the statistical dimension, we get the following expression.

\begin{corollary}\label{cor:pqc}
Let $\eta\in (0,1)$ and assume that $m\geq \delta(C)+a_{\eta}\sqrt{m}$, with $a_{\eta}=2\sqrt{\log(2/\eta)}$. Then
\begin{equation*}
  \delta(C) - (n-m) \eta \leq \Expect_{\mtx{Q}}[\delta(\mtx{PQ}C)] \leq  \delta(C).
\end{equation*}
\end{corollary}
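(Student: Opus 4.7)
The plan is to compute $\Expect_{\mtx Q}[\delta(\mtx{PQ}C)]$ exactly using the projection formula \eqref{eq:projection}, then bound the discrepancy with $\delta(C)$ by means of the concentration statement in Corollary~\ref{col:approx-kinematic}. The key observation is that everything reduces to a one-line calculation once the intrinsic volumes of $\mtx{PQ}C$ are expressed in terms of those of $C$.

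First I would write
\begin{equation*}
 \Expect_{\mtx Q}[\delta(\mtx{PQ}C)] \;=\; \sum_{j=0}^{m} j\,\Expect_{\mtx Q}[v_j(\mtx{PQ}C)] ,
\end{equation*}
and apply \eqref{eq:projection} term by term: for $0\leq j<m$ the projection formula gives $\Expect_{\mtx Q}[v_j(\mtx{PQ}C)]=v_j(C)$, while the top term satisfies $\Expect_{\mtx Q}[v_m(\mtx{PQ}C)]=t_m(C)$. Substituting and using $\delta(C)=\sum_{j=0}^n j\,v_j(C)$, a direct rearrangement yields
\begin{equation*}
 \delta(C)-\Expect_{\mtx Q}[\delta(\mtx{PQ}C)] \;=\; \sum_{j=m}^{n} j\,v_j(C) - m\,t_m(C) \;=\; \sum_{j=m}^{n} (j-m)\,v_j(C) .
\end{equation*}
The right-hand side is non-negative, which immediately gives the upper bound $\Expect_{\mtx Q}[\delta(\mtx{PQ}C)]\leq\delta(C)$.

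For the lower bound I would simply estimate the same sum by its coarsest possible bound, using $j-m\leq n-m$ uniformly for $j\leq n$:
\begin{equation*}
 \delta(C)-\Expect_{\mtx Q}[\delta(\mtx{PQ}C)] \;\leq\; (n-m)\sum_{j=m}^{n} v_j(C) \;=\; (n-m)\,t_m(C) .
\end{equation*}
The hypothesis $m\geq\delta(C)+a_\eta\sqrt{m}$ is exactly the condition required to invoke the first implication of Corollary~\ref{col:approx-kinematic}, which gives $t_m(C)\leq\eta$. Combining the two inequalities yields $\Expect_{\mtx Q}[\delta(\mtx{PQ}C)]\geq\delta(C)-(n-m)\eta$, completing the argument.

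I do not anticipate a real obstacle here: both the exact evaluation via \eqref{eq:projection} and the tail bound via Corollary~\ref{col:approx-kinematic} are already in place, and the proof is essentially bookkeeping. The only mild subtlety is the remark preceding this corollary that $\mtx{PQ}C$ need not be closed convex, but since this happens with probability zero the integrand is unaffected, and one should mention this briefly to justify applying \eqref{eq:projection} under the expectation.
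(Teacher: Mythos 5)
Your proposal is correct and follows exactly the paper's argument: the paper likewise applies the projection formula \eqref{eq:projection} to obtain $\Expect_{\mtx{Q}}[\delta(\mtx{PQ}C)]=\delta(C)-\sum_{k=1}^{n-m}k\,v_{k+m}(C)$ and then bounds the correction term by $(n-m)\,t_m(C)\leq(n-m)\eta$ via Corollary~\ref{col:approx-kinematic}. You have simply written out the bookkeeping the paper leaves implicit.
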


\begin{proof}
A direct application of the projection formulas~\eqref{eq:projection} and the definition of the statistical dimension shows that
\begin{equation*}
  \Expect_{\mtx{Q}}[\delta(\mtx{PQ}C)] = \delta(C) - \sum_{k=1}^{n-m} k v_{k+m}(C).
\end{equation*}
The bound then follows by bounding the right-hand side in a straight-forward way and applying Corollary~\ref{col:approx-kinematic}.
\end{proof}

\subsection{The TQC Lemma}\label{sec:tqc}
The following generalization of the projection formulas~\eqref{eq:projection}, first observed by Mike McCoy and Joel Tropp, may at first sight look surprising. While it can be deduced from general integral-geometric considerations (see, for example, ~\cite{amelunxen2014measures}), we include a proof because it is illustrative.

\begin{lemma}\label{le:tqc} Let $\mtx{T}\in \R^{m\times n}$ be of full rank. Then for $0\leq k<m$, 
\begin{align}\label{eq:tqc}
  \Expect[v_k(\mtx{T}\mtx{Q}C)] &= v_k(C) , & \Expect[v_m(\mtx{T}\mtx{Q}C)] &= t_m(C)
\end{align}
\end{lemma}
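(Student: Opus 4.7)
My plan is to reduce the statement to the already-established projection formula~\eqref{eq:projection} by a singular value decomposition and an invariance argument. First, via SVD write $\mtx{T}=\mtx{U}\mtx{D}\mtx{P}\mtx{V}^T$, where $\mtx{U}\in O(m)$, $\mtx{V}\in O(n)$, $\mtx{D}\in\R^{m\times m}$ is the invertible diagonal matrix of singular values, and $\mtx{P}=[\mtx{I}_m\mid\zeromtx]\in\R^{m\times n}$ is the coordinate projection. Orthogonal invariance of the $v_k$'s and left-invariance of the Haar measure on $O(n)$ then yield
\[
  \Expect_{\mtx{Q}}[v_k(\mtx{T}\mtx{Q}C)] \;=\; \Expect_{\mtx{Q}}[v_k(\mtx{D}\mtx{P}\mtx{V}^T\mtx{Q}C)] \;=\; \Expect_{\mtx{Q}}[v_k(\mtx{D}\mtx{P}\mtx{Q}C)],
\]
so it suffices to show that left-multiplying the random cone $K:=\mtx{P}\mtx{Q}C\subset\R^m$ by the invertible $\mtx{D}$ does not change its expected intrinsic volumes.

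The key observation is that the distribution of $K=\mtx{P}\mtx{Q}C$ is invariant under left multiplication by $O(m)$, since $\mtx{P}\mtx{Q}$ is uniformly distributed on the Stiefel manifold $\St(m,n)$. Using this, I would first establish an auxiliary fact: for any $O(m)$-invariant random cone $K\subset\R^m$ and any fixed linear subspace $S\subset\R^m$ of codimension $\ell$,
\[
  \Prob\{K\cap S\neq\{\vct{0}\}\} \;=\; \Expect[h_{\ell+1}(K)].
\]
Indeed, by $O(m)$-invariance the left-hand side equals $\Expect_{\mtx{R}}\Prob\{K\cap\mtx{R}S\neq\{\vct{0}\}\}$ for a uniform $\mtx{R}\in O(m)$, and Fubini combined with Crofton's formula (Corollary~\ref{cor:Croft}) applied conditionally on $K$ in $\R^m$ yields the right-hand side.

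Now apply Crofton once more to write $h_{\ell+1}(\mtx{D}K)=\Prob_{\mtx{R}}\{K\cap\mtx{D}^{-1}\mtx{R}L\neq\{\vct{0}\}\}$ for a fixed codimension-$\ell$ subspace $L\subset\R^m$, and use the auxiliary fact with $S=\mtx{D}^{-1}\mtx{R}L$ (which has codimension $\ell$ since $\mtx{D}$ is invertible). Fubini then gives $\Expect[h_{\ell+1}(\mtx{D}K)]=\Expect[h_{\ell+1}(K)]$ for every $\ell\geq 0$. Since the intrinsic volumes of a cone in $\R^m$ are determined by the half-tail functionals via~\eqref{eq:v-intermsof-h} (with $v_k=0$ for $k>m$), the expected intrinsic volumes agree as well, i.e., $\Expect_{\mtx{Q}}[v_k(\mtx{T}\mtx{Q}C)]=\Expect_{\mtx{Q}}[v_k(\mtx{P}\mtx{Q}C)]$ for all $0\leq k\leq m$, and the right-hand side is supplied by~\eqref{eq:projection}.

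I expect the main obstacle to be the bookkeeping in the invariance argument: one has to keep the expectations over $K$ and over the auxiliary rotation $\mtx{R}$ cleanly separated while swapping them via Fubini, and to verify that Crofton is applied correctly (in particular that $\mtx{D}^{-1}\mtx{R}L$ is a genuine codimension-$\ell$ subspace, and that $\mtx{D}K$ is almost surely a closed convex cone, which is immediate here as $\mtx{D}$ is a linear isomorphism). A secondary issue is that the Crofton statement in the excerpt is formulated for polyhedral cones; for a general closed convex $C$ one invokes the standard polyhedral approximation.
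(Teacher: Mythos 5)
Your argument is correct, and it reaches the same conclusion by the same underlying engine as the paper --- namely, that intersection probabilities with subspaces are preserved when a subspace is pulled back through the linear map, combined with rotation invariance, Fubini, and the Crofton formula to convert such probabilities into half-tail functionals $h_j$, and finally the reconstruction of the $v_k$ from the $h_j$ via~\eqref{eq:v-intermsof-h}. The organization, however, is genuinely different. The paper works with $\mtx{T}$ directly: it compares $\Prob\{\mtx{TQ}C\cap L\neq\{\zerovct\}\}$ with $\Prob\{\mtx{PQ}C\cap(\mtx{T}^{-1}L\cap(\ker\mtx{T})^{\perp})\neq\{\zerovct\}\}$, where $\mtx{P}$ is the orthogonal projection onto $(\ker\mtx{T})^{\perp}$, using Proposition~\ref{prop:nres=0,sres=0} to handle the kernel, and then applies the auxiliary-rotation/Fubini/Crofton device separately to each side. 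You instead strip off the orthogonal factors by an SVD first, reducing to an invertible diagonal $\mtx{D}$ acting on the $O(m)$-invariant random cone $K=\mtx{PQ}C$, and you isolate a clean standalone statement (expected half-tails of an $O(m)$-invariant random cone equal intersection probabilities with any fixed subspace of the right codimension, hence are unchanged by any invertible linear image). Your modular lemma is arguably the more transparent and reusable formulation, and it avoids having to reason about $\ker\mtx{T}$ at all; the paper's version avoids the SVD and keeps everything in the ambient $\R^n$. Both treatments gloss over the same technicalities, which you correctly flag: almost-sure closedness and convexity of $\mtx{PQ}C$, the degenerate case where the random cone is a linear subspace (excluded in the hypotheses of Corollary~\ref{cor:Croft}), and polyhedral approximation for general closed convex $C$.
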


\begin{proof}
In view of~\eqref{eq:v-intermsof-h}, it suffices to show~\eqref{eq:tqc} for the half-tail functionals $h_j$ instead of the intrinsic volumes $v_j$. Let $L\subset\R^n$ be a linear subspace of dimension $\dim L=k\leq m$. 
From Proposition~\ref{prop:nres=0,sres=0} it follows that
\begin{equation*}
  \mtx{Q}C\cap \mtx{T}^{-1}L\neq \{\zerovct\} \Longleftrightarrow \mtx{TQ}C\cap L \neq \{\zerovct\} \text{ or } \ker \mtx{T}\cap \mtx{Q}C \neq \{\zerovct\},
\end{equation*} 
where in this case, as before, $\mtx{T}^{-1}L$ denotes the pre-image of $L$ under $\mtx{T}$. 
Denoting by $\mtx{P}$ the orthogonal projection onto the complement $(\ker\mtx{T})^{\perp}$, 
we thus get
\begin{equation*}
  \mtx{PQ}C\cap (\mtx{T}^{-1}L\cap (\ker\mtx{T})^{\perp}) \neq \{\zerovct\} \Longleftrightarrow \mtx{TQ}C\cap L \neq \{\zerovct\},
\end{equation*}
and taking probabilities,
\begin{equation}\label{eq:probabilities}
\Prob\left\{\mtx{PQ}C\cap (\mtx{T}^{-1}L\cap (\ker\mtx{T})^{\perp}) \neq \{\zerovct\}\right\} = \Prob\{\mtx{TQ}C\cap L \neq \{\zerovct\}\}.
\end{equation}
To compute the probability on the left, let $\mtx{Q}_0$ is a random orthogonal transformation of the space $(\ker \mtx{T})^{\perp}$. Restricting to $(\ker \mtx{T})^{\perp}$ as ambient space,
\begin{align*}
 \Prob_{\mtx{Q}}&\left\{\mtx{PQ}C\cap (\mtx{T}^{-1}L\cap (\ker\mtx{T})^{\perp}) \neq \{\zerovct\}\right\} \\
 &= \Prob_{\mtx{Q}}\left\{\mtx{PQ}C\cap \mtx{Q}_0(\mtx{T}^{-1}L\cap (\ker\mtx{T})^{\perp}) \neq \{\zerovct\}\right\}\\
 &= \Expect_{\mtx{Q}_0}\Prob_{\mtx{Q}}\left\{\mtx{PQ}C\cap \mtx{Q}_0(\mtx{T}^{-1}L\cap (\ker\mtx{T})^{\perp}) \neq \{\zerovct\}\right\}\\
 &\stackrel{(1)}{=} \Expect_{\mtx{Q}}\Prob_{\mtx{Q}_0}\left\{\mtx{PQ}C\cap \mtx{Q}_0(\mtx{T}^{-1}L\cap (\ker\mtx{T})^{\perp}) \neq \{\zerovct\}\right\}\\
 &\stackrel{(2)}{=} \Expect_{\mtx{Q}}[h_{m-k+1}(\mtx{PQ}C)]
\end{align*}
where for (1) we summoned Fubini on the representation of the probability as expectation of an indicator variable and for (2) the Crofton formula~\ref{cor:Croft} with $(\ker\mtx{T})^{\perp}$ as ambient space. 
A similar argument on the right-hand side of~\eqref{eq:probabilities} shows that
\begin{equation*}
\Prob_{\mtx{Q}}\{\mtx{TQ}C\cap L \neq \{\zerovct\}\} = \Expect_{\mtx{Q}}[h_{m-k+1}(\mtx{TQ}C)].
\end{equation*}
In summary, we have for shown that $\Expect_{\mtx{Q}}[h_{m-k+1}(\mtx{TQ}C)] = \Expect_{\mtx{Q}}[h_{m-k+1}(\mtx{PQ}C)]$ for $0\leq k\leq m$, and hence also $\Expect_{\mtx{Q}}[v_i(\mtx{TQ}C)] = \Expect_{\mtx{Q}}[v_i(\mtx{PQ}C)]$ for $0\leq i\leq m$. The claim now follows by applying the projection formula~\eqref{eq:projection}.
\end{proof}

As with the case where $\mtx{T}$ is a projection, applying the above to the statistical dimension, we get the following expression.

\begin{corollary}
Let $\eta\in (0,1)$ and assume that $m\geq \delta(C)+a_{\eta}\sqrt{m}$, with $a_{\eta}=2\sqrt{\log(2/\eta)}$. Then under the conditions of Lemma~\ref{le:tqc}, we have
\begin{equation*}
  \delta(C) - (n-m) \eta \leq \Expect_{\mtx{Q}}[\delta(\mtx{TQ}C)] \leq  \delta(C) - \eta.
\end{equation*}
\end{corollary}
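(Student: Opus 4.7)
The plan is to mirror the proof of Corollary~\ref{cor:pqc} verbatim, with the projection formula~\eqref{eq:projection} replaced by its generalization from Lemma~\ref{le:tqc}. The key point is that the intrinsic-volume identities~\eqref{eq:tqc} have exactly the same shape as~\eqref{eq:projection}, so the computation is formally identical.

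First I would observe that since $\mtx{T}$ has $m$ rows, the image $\mtx{TQ}C$ is contained in $\R^m$, so (invoking the remark that $\mtx{TQ}C$ is a closed convex cone with probability one) we have $v_k(\mtx{TQ}C)=0$ for $k>m$. Hence
\begin{equation*}
\delta(\mtx{TQ}C) \;=\; \sum_{k=0}^{m} k\, v_k(\mtx{TQ}C).
\end{equation*}
Taking expectation and inserting Lemma~\ref{le:tqc},
\begin{equation*}
\Expect_{\mtx{Q}}[\delta(\mtx{TQ}C)]
\;=\; \sum_{k=0}^{m-1} k\, v_k(C) \;+\; m\, t_m(C)
\;=\; \delta(C) \;-\; \sum_{j=1}^{n-m} j\, v_{m+j}(C),
\end{equation*}
where the second equality uses $t_m(C)=\sum_{k\geq m}v_k(C)$ and rearranges the tail. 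The upper bound $\Expect_{\mtx{Q}}[\delta(\mtx{TQ}C)]\leq \delta(C)$ is then immediate from non-negativity of the tail sum.

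For the lower bound I would estimate the tail sum crudely by
\begin{equation*}
\sum_{j=1}^{n-m} j\, v_{m+j}(C) \;\leq\; (n-m) \sum_{j=1}^{n-m} v_{m+j}(C) \;=\; (n-m)\, t_{m+1}(C) \;\leq\; (n-m)\, t_m(C),
\end{equation*}
and then apply Corollary~\ref{col:approx-kinematic}: the hypothesis $m\geq \delta(C)+a_\eta\sqrt{m}$ gives $t_m(C)\leq\eta$, hence $\Expect_{\mtx{Q}}[\delta(\mtx{TQ}C)]\geq \delta(C)-(n-m)\eta$.

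There is no real obstacle here; the whole argument is bookkeeping once Lemma~\ref{le:tqc} is in hand. The only subtle point is the stated upper bound $\delta(C)-\eta$, which is sharper than the trivial $\delta(C)$: extracting an $\eta$ from the tail sum would require a matching lower bound on $\sum_{j=1}^{n-m} j\, v_{m+j}(C)$, and this does not follow from the one-sided concentration inequality in Corollary~\ref{col:approx-kinematic}. If a refinement is really intended, one would need an additional input (for instance, a non-degeneracy assumption on $C$ ensuring that $v_{m+1}(C)$ is bounded below); otherwise the natural upper bound coming from this method is simply $\delta(C)$, matching Corollary~\ref{cor:pqc}.
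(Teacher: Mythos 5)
Your argument is exactly the paper's: no separate proof is given for this corollary, the text simply points back to Corollary~\ref{cor:pqc}, whose proof is the identity $\Expect_{\mtx{Q}}[\delta(\mtx{PQ}C)]=\delta(C)-\sum_{k=1}^{n-m}k\,v_{k+m}(C)$ followed by the crude tail bound $\sum_{k=1}^{n-m}k\,v_{k+m}(C)\leq (n-m)\,t_{m}(C)\leq (n-m)\eta$ via Corollary~\ref{col:approx-kinematic}, and Lemma~\ref{le:tqc} lets you repeat this verbatim with $\mtx{T}$ in place of $\mtx{P}$. You are also right to distrust the stated upper bound $\delta(C)-\eta$: the analogous Corollary~\ref{cor:pqc} claims only $\leq\delta(C)$, nothing in the method produces the extra $-\eta$, and it appears to be a typo in the statement rather than a gap in your proof.
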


It remains to be seen whether the fact that the main preconditionining results can be formulated with an arbitrary matrix $\mtx{T}$, rather than just a projection $\mtx{P}$, can be of use.

\section{Improved condition bounds}\label{sec:improved}
In this section we derive the improved condition number bounds on the statistical dimension. We first derive Corollary~\ref{prop:improved-cond}, restated here as a proposition, which is a simple consequence of the behaviour of the statistical dimension under polarity.

\begin{proposition}
Let $C\subseteq\IR^n$ be a closed convex cone, and
$\delta(C)$ the statistical dimension of $C$. Then for $\vct A\in\IR^{n\times n}$ of full rank,
\begin{equation*}
  \sdim(\vct AC) \leq \kappa(\vct A)^{-2} \cdot \sdim(C) + \left(1-\kappa(\vct A)^{-2}\right)\cdot n.
\end{equation*}
\end{proposition}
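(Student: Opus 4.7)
The plan is to leverage the complementarity relation $\delta(C)+\delta(C^\polar)=n$ from~\eqref{eq:complement} together with the \emph{lower} bound in Theorem~\ref{thm:main-cond-bound} (equivalently, the left inequality of~\eqref{eq:mu_r(TC)<=...-kappa-1}) applied to the polar cone. In essence, we trade the upper bound on $\delta(\vct A C)$ for a lower bound on $\delta((\vct A C)^\polar)$, which the standard condition bound already supplies.

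The concrete steps are as follows. First, I would record the standard polar identity for invertible linear maps:
\begin{equation*}
  (\vct A C)^\polar \;=\; \vct A^{-T}\, C^\polar .
\end{equation*}
This follows from the chain $\vct y\in (\vct A C)^\polar \Leftrightarrow \langle \vct y,\vct A\vct x\rangle\leq 0\ \forall\vct x\in C \Leftrightarrow \vct A^T\vct y\in C^\polar \Leftrightarrow \vct y\in \vct A^{-T}C^\polar$. Next, invoking the complementarity relation,
\begin{equation*}
  \delta(\vct A C) \;=\; n - \delta\bigl((\vct A C)^\polar\bigr) \;=\; n - \delta\bigl(\vct A^{-T} C^\polar\bigr) .
\end{equation*}
Applying the lower bound of~\eqref{eq:mu_r(TC)<=...-kappa-1} to the cone $C^\polar$ and the full-rank matrix $\vct A^{-T}$ (noting that $\kappa(\vct A^{-T})=\kappa(\vct A)$, since transposition and inversion leave the condition number invariant) gives
\begin{equation*}
  \delta\bigl(\vct A^{-T} C^\polar\bigr) \;\geq\; \frac{\delta(C^\polar)}{\kappa(\vct A)^{2}} \;=\; \frac{n-\delta(C)}{\kappa(\vct A)^{2}} .
\end{equation*}

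Combining the last two displays immediately yields
\begin{equation*}
  \delta(\vct A C) \;\leq\; n - \frac{n-\delta(C)}{\kappa(\vct A)^{2}} \;=\; \kappa(\vct A)^{-2}\cdot\delta(C) + \bigl(1-\kappa(\vct A)^{-2}\bigr)\cdot n ,
\end{equation*}
as desired. I do not anticipate any genuine obstacle: the proof is almost a one-line consequence of~\eqref{eq:complement} and Theorem~\ref{thm:main-cond-bound}, and the only point requiring a sentence of justification is the polar identity $(\vct A C)^\polar=\vct A^{-T}C^\polar$ and the invariance $\kappa(\vct A^{-T})=\kappa(\vct A)$, both of which are standard.
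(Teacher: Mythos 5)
Your proof is correct and follows essentially the same route as the paper's: both pass to the polar cone via $(\vct A C)^\polar=\vct A^{-T}C^\polar$ (Lemma~\ref{lem:A^(-1)(D^*)} in the paper), apply the complementarity relation~\eqref{eq:complement} twice, and invoke the lower bound of Theorem~\ref{thm:main-cond-bound} for $\vct A^{-T}$ acting on $C^\polar$ together with $\kappa(\vct A^{-T})=\kappa(\vct A)$. There is nothing to add.
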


\begin{proof}
We have
\begin{align*}
  \delta(\mtx{A}C) &\stackrel{(1)}{=} n-\delta(\mtx{A}^{-T}C^{\circ})\\
  &\stackrel{(2)}{\leq} n-\kappa(\mtx{A})^{-2} \delta(C^{\circ}) \\
  &\stackrel{(3)}{=} n-\kappa(\mtx{A})^{-2} (n-\delta(C)) \\
  & = \kappa(\vct A)^{-2} \cdot \sdim(C) + \left(1-\kappa(\vct A)^{-2}\right)\cdot n,
\end{align*}
where for (1) we used~\eqref{eq:complement} and Lemma~\ref{lem:A^(-1)(D^*)}, for (2) we used~Theorem~\ref{thm:main-cond-bound}, and for (3) we used~\eqref{eq:complement} again.
\end{proof}

We conclude this section by proving Theorem~\ref{thm:B}, which we restate for convenience. 

\begin{theorem}
Let $C\subseteq \R^n$ be a closed convex cone and $\mtx{A}\in \R^{p\times n}$ have full rank. Let $\eta\in (0,1)$ and assume that $m\geq \delta(C)+2\sqrt{\log(2/\eta)m}$. Then
\begin{equation*}
  \delta(\mtx{A}C)\leq \overline{\mathcal{R}}^2_{C,m}(\mtx{A}) \cdot \delta(C)+(n-m)\eta.
\end{equation*}
For the matrix condition number,
\begin{equation}\label{eq:pqc-bound}
  \delta(\mtx{A}C) \leq \overline{\kappa}_{m}^2(\mtx{A}) \cdot \delta(C)+(n-m)\eta.
\end{equation}
\end{theorem}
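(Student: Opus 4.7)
The plan is to combine the pointwise deterministic bound from Theorem~\ref{thm:main-cond-bound} applied to the preconditioned matrix $\mtx{PQA}$ with the TQC Lemma and the concentration of intrinsic volumes. The hope is that, for random orthogonal $\mtx{Q}$, the matrix $\mtx{PQA}$ is much better conditioned than $\mtx{A}$ itself, and that the TQC identity lets us translate any bound on $\delta(\mtx{PQA}C)$ back into one on $\delta(\mtx{A}C)$ at a cost controlled by $(n-m)\eta$.

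First, for each fixed $\mtx{Q}\in O(n)$ I would apply Theorem~\ref{thm:main-cond-bound} to the linear map $\mtx{PQA}\colon\R^n\to\R^m$ and the cone $C$ to obtain the pointwise inequality $\delta(\mtx{PQA}C)\le\Ren_C(\mtx{PQA})^2\cdot\delta(C)$, and then take expectation over $\mtx{Q}$ to get
\begin{equation*}
\Expect_{\mtx{Q}}\bigl[\delta(\mtx{PQA}C)\bigr] \;\le\; \Expect_{\mtx{Q}}\bigl[\Ren_C(\mtx{PQA})^2\bigr]\cdot\delta(C).
\end{equation*}
The $\overline{\kappa}_m^2(\mtx{A})$ version of the bound will then follow from the pointwise comparison $\Ren_C(\mtx{PQA})\le\kappa(\mtx{PQA})$, which is immediate from the fact that the restricted smallest singular value dominates the unrestricted one (cf.~\eqref{eq:R_C(A)<=kappa(A)} in the square case, with the analogous estimate in the wide case).

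Next, I would relate $\Expect_{\mtx{Q}}[\delta(\mtx{PQA}C)]$ back to $\delta(\mtx{A}C)$ by applying the TQC Lemma~\ref{le:tqc} with $\mtx{T}=\mtx{P}$ to the cone $\mtx{A}C\subseteq\R^n$, using the identity $\mtx{PQA}C=\mtx{PQ}(\mtx{A}C)$. This gives $\Expect_{\mtx{Q}}[v_k(\mtx{PQA}C)]=v_k(\mtx{A}C)$ for $0\le k<m$ and $\Expect_{\mtx{Q}}[v_m(\mtx{PQA}C)]=t_m(\mtx{A}C)$, from which a direct computation of the statistical dimension yields
\begin{equation*}
\Expect_{\mtx{Q}}\bigl[\delta(\mtx{PQA}C)\bigr] \;=\; \delta(\mtx{A}C) - \sum_{k=m+1}^n (k-m)\,v_k(\mtx{A}C).
\end{equation*}
Combining the two steps and applying the monotonicity bound $\sum_{k=m+1}^n(k-m)v_k(\mtx{A}C)\le(n-m)\,t_m(\mtx{A}C)$, I would arrive at
\begin{equation*}
\delta(\mtx{A}C) \;\le\; \Expect_{\mtx{Q}}\bigl[\Ren_C(\mtx{PQA})^2\bigr]\cdot\delta(C) + (n-m)\,t_m(\mtx{A}C).
\end{equation*}

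The main obstacle is therefore to close the argument by showing $t_m(\mtx{A}C)\le\eta$. The natural tool is Corollary~\ref{col:approx-kinematic} (concentration of intrinsic volumes), but applied directly to $\mtx{A}C$ it would want $m\ge\delta(\mtx{A}C)+2\sqrt{\log(2/\eta)m}$, whereas the hypothesis is stated in terms of $\delta(C)$. Bridging this gap is the delicate step: the plan would be to exploit the identity $t_m(\mtx{A}C)=\Expect_{\mtx{Q}}[v_m(\mtx{PQA}C)]$ provided by the TQC Lemma, noting that $\mtx{PQA}C\subseteq\R^m$ so $\delta(\mtx{PQA}C)\le m$ pointwise, and then invoking the concentration bound on $v_m(\mtx{PQA}C)$ (or, equivalently, on the tail of the discrete distribution $v_k(\mtx{PQA}C)$ for fixed $\mtx{Q}$) together with the hypothesis $m\ge\delta(C)+2\sqrt{\log(2/\eta)m}$ applied cone-by-cone to $\mtx{Q}C$ under orthogonal invariance. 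If that route fails, an alternative is a Markov-type argument using $\Expect_{\mtx{Q}}[\delta(\mtx{PQA}C)]\le\Expect_{\mtx{Q}}[\Ren_C(\mtx{PQA})^2]\delta(C)$ to convert the expectation bound into the desired tail control.
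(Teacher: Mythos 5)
Your reconstruction follows the paper's proof exactly: the paper's argument is precisely the two-step chain $\delta(\mtx{A}C)\leq \Expect_{\mtx{Q}}[\delta(\mtx{PQ}(\mtx{A}C))]+(n-m)\eta$ (via the projection formula~\eqref{eq:projection} and Corollary~\ref{cor:pqc} applied to the cone $\mtx{A}C$) followed by the pointwise application of Theorem~\ref{thm:main-cond-bound} to $\mtx{PQA}$ and $C$. Your identity $\Expect_{\mtx{Q}}[\delta(\mtx{PQA}C)]=\delta(\mtx{A}C)-\sum_{k>m}(k-m)v_k(\mtx{A}C)$ and the estimate by $(n-m)t_{m+1}(\mtx{A}C)$ are exactly how Corollary~\ref{cor:pqc} is derived.

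The obstacle you flag at the end is genuine, and you should know that the paper does not resolve it either: the published proof simply invokes Corollary~\ref{cor:pqc} for the cone $\mtx{A}C$, which requires $m\geq\delta(\mtx{A}C)+a_\eta\sqrt{m}$, while the stated hypothesis controls only $\delta(C)$. Since $\delta(\mtx{A}C)$ can greatly exceed $\delta(C)$ for ill-conditioned $\mtx{A}$ (this is the whole point of the theorem), the hypothesis as written does not imply $t_m(\mtx{A}C)\leq\eta$. Neither of your proposed bridges closes this: the TQC/concentration route founders because the only a priori control on $\delta(\mtx{PQA}C)$ is $\Ren_C(\mtx{PQA})^2\delta(C)$, which need not lie below $m$, so concentration of the intrinsic volumes of $\mtx{PQA}C$ (equivalently of $\mtx{A}C$) cannot be triggered; and a Markov argument on $\Expect_{\mtx{Q}}[\delta(\mtx{PQA}C)]$ gives tail bounds on the random variable $\delta(\mtx{PQA}C)$, not on the fixed quantity $t_m(\mtx{A}C)$. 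The honest repair is to state the hypothesis as $m\geq\delta(\mtx{A}C)+a_\eta\sqrt{m}$ (or to supply a separate argument that $t_m(\mtx{A}C)\leq\eta$); as it stands you have reproduced the proof faithfully and located its actual weak point. A secondary caveat: your claim that $\Ren_C(\mtx{PQA})\leq\kappa(\mtx{PQA})$ holds ``in the wide case'' is not automatic --- inequality~\eqref{eq:R_C(A)<=kappa(A)} is proved only for $m\geq n$, and for a wide matrix $\mtx{B}$ one only has $\srestm{B}{\R^m}{C}\leq\sigma(\mtx{B}^T)$, which points the wrong way; the paper is equally terse here (``follows as in the proof of Theorem~\ref{thm:main-cond-bound}''), so this step also deserves justification.
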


\begin{proof}
The upper bound follows from
\begin{align*}
  \delta(\mtx{A}C) &\leq \Expect_{\mtx{Q}}[\delta(\mtx{P}_m\mtx{QA}C)]+(n-m)\eta\\
  & \leq \Expect_{\mtx{Q}}\Big[\Ren_C(\mtx{P}_m\mtx{QA})^2\Big]\delta(C)+(n-m)\eta,
\end{align*}
where we used Theorem~\ref{thm:main-cond-bound} for the second inequality. The upper bound in terms for the matrix condition number follows as in the proof of Theorem~\ref{thm:main-cond-bound}.
\end{proof}

\section{Applications}\label{sec:applications}
In this section we apply the results derived for convex cones to the setting of convex regularizers. To give this application some context, we briefly review some of the theory.

\subsection{Convex regularization, subdifferentials and the descent cone}\label{sec:conv-anal}
In practical applications the cones of interest often arise as cones generated by the subgradient of a proper convex function $f\colon \R^n\to \R\cup\{\infty\}$. 
The exact form of the general convex regularization problem is
\begin{equation}\label{eq:convreg}
 \minimize f(\vct{x}) \subjto \mtx{\Omega}\vct{x}=\vct{b},
\end{equation}
while the noisy form is
\begin{equation}\label{eq:convreg-noise}
\minimize f(\vct{x}) \subjto \norm{\mtx{\Omega}\vct{x}-\vct{b}}_2\leq \veps.
\end{equation}
Interchanging the role of the function $f$ and the residual, we get the \textit{generalized LASSO}
\begin{equation}\label{eq:convreg-lasso}
 \minimize \norm{\mtx{\Omega}\vct{x}-\vct{b}}_2 \subjto f(\vct{x})\leq \tau.
\end{equation}
Finally, we have the Lagrangian form,
\begin{equation}\label{eq:convreg-lagrange}
 \minimize \norm{\mtx{\Omega}\vct{x}-\vct{b}}_2^2 + \lambda f(\vct{x}).
\end{equation}

These last three problems are, in fact, equivalent (see~\cite[Chapter 3]{FR:13} for a concise derivation in the case $f(\vct{x})=\norm{\vct{x}}_1$).  The practical problem consists in effectively finding the parameters involved.

The first-order optimality condition states that $\hat{\vct{x}}$ is a unique solution of~\eqref{eq:convreg} if and only if
\begin{equation}\label{eq:opt}
 \exists \vct{y}\neq \zerovct \colon \mtx{\Omega}^{T}\vct{y} \in \partial f(\hat{\vct{x}}),
\end{equation}
where $\partial f(\hat{\vct{x}})$ denotes the subdifferential of $f$ at $\hat{\vct{x}}$, i.e., the set
\begin{equation*}
 \partial f(\hat{\vct{x}}) = \{\vct{z}\in \R^n \mid f(\hat{\vct{x}}+\vct{z})\geq f(\hat{\vct{x}})+\ip{\vct{z}}{\vct{x}} \}.
\end{equation*}
If $f$ is differentiable at $\hat{\vct{x}}$, then of course the subdifferential contains only the gradient of $f$ at $\hat{\vct{x}}$, and the vector $\vct{y}$ in~\eqref{eq:opt} consists of the Lagrange multipliers.

\begin{example}
If $f$ is a norm, with dual norm $f^\circ$, then the subdifferential of $f$ at $\hat{\vct{x}}$ is
\begin{equation*}
 \partial f(\hat{\vct{x}}) = \begin{cases} \{ \vct{z}\in \R^n \mid f^\circ(\vct{z})=1, \ip{\vct{z}}{\hat{\vct{x}}}=f(\hat{\vct{x}})\} & \hat{\vct{x}}\neq \zerovct\\
                              \{\vct{z}\in \R^n \mid f^\circ(\vct{z})\leq 1\} & \hat{\vct{x}}=\zerovct.
                             \end{cases}
\end{equation*}
\end{example}

\begin{example}
For the $\ell_1$-norm at an $s$-sparse vector $\hat{\vct{x}}$,
 \begin{equation*}
 \partial \lone{\hat{\vct{x}}} =  \{ \vct{z}\in \R^n \mid \norm{\vct{z}}_\infty=1, \ip{\vct{z}}{\hat{\vct{x}}}=\lone{\hat{\vct{x}}}\},
\end{equation*}
or more explicitly,
\begin{equation}\label{eq:l1subdif}
 \partial \lone{\hat{\vct{x}}} =  \{ \vct{z}\in \R^n \mid z_i=\mathrm{sign } \ (\hat{x}_i) \text{ if }\hat{x}_i\neq 0, \ z_j\in [-1,1] \text{ if } \hat{x}_j=0 \}.
\end{equation}
\end{example}

The descent cone of $f$ at $\hat{\vct{x}}$ is defined as
\begin{equation*}
 \Desc(f,\hat{\vct{x}}) = \bigcup_{\tau>0} \left\{\vct{y}\in \R^n\mid f(\hat{\vct{x}}+\tau \vct{y})\leq f(\hat{\vct{x}})\right\}.
\end{equation*}
The convex cone generated by the subdifferential of $f$ at $\hat{\vct{x}}$ is the closure of the polar cone of $\Desc(f,\hat{\vct{x}})$,
\begin{equation}\label{eq:desc-subd-dual}
 \mathrm{cone}\left(\partial f(\hat{\vct{x}})\right) = \overline{\Desc(f,\hat{\vct{x}})^{\circ}},
\end{equation}
Condition~\eqref{eq:opt} is therefore equivalent to
\begin{equation*}
 \ker \mtx\Omega \cap \Desc(f,\hat{\vct{x}}) = \{\zerovct\},
\end{equation*} 
namely, that the kernel of $\mtx{\Omega}$ does not intersect the descent cone nontrivially.

 An important class of regularizers are of the form $f(\vct{x}):=g(\mtx{A}\vct{x})+h(\mtx{B}\vct{x})$, with $\mtx{A}$ and $\mtx{B}$ linear maps. It follows from~\cite[Theorems 23.8, 23.9]{Rock} that the subdifferential is
\begin{equation*}
 \partial f(\vct{x}) = \mtx{A}^T\partial g(\mtx{A}\vct{x})+\mtx{B}^T\partial h(\mtx{B}\vct{x}).
\end{equation*}

\begin{example}
In the $\ell_1$-analysis, or cosparse, model, one considers regularizers of the form $\|\mtx{D}\vct{x}\|_1$, with $\mtx{D}\in \R^{p\times n}$ with typically $p\geq n$. The interest is on vectors for which $\mtx{D}\vct{x}_0$ is $s$-sparse. If $\mtx{D}$ has full rank and $\vct{x}_0\neq \zerovct$, then $s\geq p-n+1$, as otherwise $\mtx{D}$ would have a $n\times n$ minor that maps $\vct{x}_0$ to $\vct{0}$. The focus in this model has traditionally been on the {\em cosupport}, i.e., the location of the entries of $\mtx{D}\vct{x}_0$ that vanish. A typical example would be a shift invariant wavelet transform. The subdifferential of $\|\mtx{D}\cdot\|_1$ is given by $\mtx{D}^T\partial\|\mtx{D}\vct{x}_0\|_1$. For invertible $\mtx{D}$, combining~\eqref{eq:desc-subd-dual} with Lemma~\ref{lem:A^(-1)(D^*)} we get,
\begin{equation}\label{eq:descent-dual}
  \Desc(\|\mtx{D}\cdot\|_1,\vct{x}_0) = \mtx{D}^{-1} \Desc(\|\cdot\|_1,\mtx{D}\vct{x}_0).
\end{equation}
When working with the subdifferential cone rather than the descent cone, we don't need the invertibility requirement.
\end{example}

\begin{example}[Finite differences]
 Let $\vct{x}\in\R^n$ and let
 \begin{equation}\label{eq:fin-div}
  \mtx{D} = \begin{pmatrix} 
             -1 & 1 & 0 & \cdots & 0 & 0\\
             0 & -1 & 1 & \cdots & 0 & 0\\
             0 & 0 & -1 & \cdots & 0 & 0\\
             \vdots & \vdots & \vdots & \ddots & \vdots & \vdots \\
             0 & 0 & 0 & \cdots & -1 & 1\\
            0 & 0 & 0 & \cdots & 0 & -1
            \end{pmatrix}
 \end{equation}
be the discrete finite difference matrix. Thus 
\begin{equation*}
 \mtx{D}\vct{x}=(x_2-x_1,x_3-x_2,\dots,x_d-x_{d-1},-x_d)^{T}.
\end{equation*}
Define $g(\vct{x}) := f(\mtx{D}\vct{x})$. Then
for a fixed $\hat{\vct{x}}$, the subdifferential is given by
\begin{equation*}
 \partial g(\hat{\vct{x}}) = \mtx{D}^T \partial f(\mtx{D}\hat{\vct{x}}).
\end{equation*}
In the special case where $f$ is the $\ell_1$-norm and $\mtx{D}\hat{\vct{x}}$ is $s$-sparse with support $I\subset [n]$,
\begin{equation*}
 \partial g(\hat{\vct{x}}) = \{ \mtx{D}^T\vct{z} \mid \norm{\vct{z}}_\infty = 1, \ip{\vct{z}}{\mtx{D}\hat{\vct{x}}}=\norm{\mtx{D}\hat{\vct{x}}}_1\}.
\end{equation*}
One can think of such a vector $\hat{\vct{x}}$ as a signal with sparse gradient.
\end{example}

\begin{example}
 (Weighted $\ell_1$ norm). Let $\vct{\omega}\in \R^n$ be a vector of weights and define the weighted $\ell_1$-norm
 \begin{equation*}
  \norm{\vct{x}}_{\vct{\omega},1} = \sum_{j=1}^n \omega_j |x_j|.
 \end{equation*}
By extension from the $\ell_1$ example, we have
\begin{align*}
 \partial \norm{\hat{\vct{x}}}_{\vct{\omega},1} &=  \{ \vct{z}\in \R^n \mid z_i=\omega_i\ \mathrm{sign } \ (\hat{x}_i) \text{ if }\hat{x}_i\neq 0, \ z_j\in [-\omega_j,\omega_j] \text{ if } \hat{x}_j=0 \} \\
 &= \diag(\vct{\omega}) \ \partial \lone{\hat{\vct{x}}}.
\end{align*}
This example becomes interesting when considering {\em weighted} $s$-sparse vectors, that is, vectors such that
\begin{equation*}
 \norm{\vct{x}}_{\vct{\omega},0} = \sum_{x_j\neq 0} \omega_j^2 = s.
\end{equation*}
\end{example}

The use of composite regularizers to recover simultaneously structured models was studied in~\cite{oymak2015simultaneously}.

\subsection{Performance bounds in convex regularization}\label{sec:performance-bounds}
As mentioned in the introduction, computing the statistical dimension of convex regularizers is in general a difficult problem, with only few cases allowing for closed-form expressions. 
Using the condition bounds for the statistical dimension of linear images of convex cones, and translating these to the setting of convex regularizers, we get the corresponding statements in Corollary~\ref{cor:fromA}, which we restate here.

\begin{corollary}
Let $f(\vct{x}) = g(\vct{D}\vct{x})$, where $g$ is a proper convex function and let $\mtx{D}\in \R^{n\times n}$ be non-singular. Then
\begin{equation*}
  \delta(f,\vct{x}_0) \leq \Ren_{\Desc(g,\mtx{D}\vct{x}_0)}\left(\mtx{D}^{-1}\right) \cdot \delta(g,\mtx{D}\vct{x}_0).
\end{equation*}
In particular, 
\begin{equation*}
  \frac{\delta(g,\mtx{D}\vct{x}_0)}{\kappa(\mtx{D})^2}\leq \delta(f,\vct{x}_0) \leq \kappa(\mtx{D})^2 \cdot \delta(g,\mtx{D}\vct{x}_0).
\end{equation*}
\end{corollary}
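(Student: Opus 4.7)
The plan is to reduce this corollary to Theorem~\ref{thm:main-cond-bound} through the specialization $C = \Desc(g,\mtx{D}\vct{x}_0)$ and $\mtx{A} = \mtx{D}^{-1}$. The pivotal step is the ``chain rule'' for descent cones under precomposition with an invertible linear map, namely $\Desc(f,\vct{x}_0) = \mtx{D}^{-1}\Desc(g,\mtx{D}\vct{x}_0)$, which is the identity~\eqref{eq:descent-dual} recalled in Section~\ref{sec:conv-anal}. Once this identity is in place, the corollary is essentially a syntactic rewriting of Theorem~\ref{thm:main-cond-bound}, so no real obstacle is expected.

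First I would verify the descent cone identity directly from the definition. A vector $\vct{y}$ lies in $\Desc(f,\vct{x}_0)$ if and only if $f(\vct{x}_0+\tau\vct{y}) \leq f(\vct{x}_0)$ for some $\tau>0$; substituting $f = g\circ\mtx{D}$ turns this inequality into $g(\mtx{D}\vct{x}_0 + \tau\mtx{D}\vct{y}) \leq g(\mtx{D}\vct{x}_0)$, which says precisely that $\mtx{D}\vct{y} \in \Desc(g,\mtx{D}\vct{x}_0)$. Since $\mtx{D}$ is non-singular, this is equivalent to $\vct{y} \in \mtx{D}^{-1}\Desc(g,\mtx{D}\vct{x}_0)$, and both cones are closed since $\mtx{D}^{-1}$ is a linear bijection.

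Next, with $C$ and $\mtx{A}=\mtx{D}^{-1}$ as above, the preceding identity gives $\mtx{A}C = \Desc(f,\vct{x}_0)$. Applying Theorem~\ref{thm:main-cond-bound} yields $\delta(f,\vct{x}_0) = \delta(\mtx{D}^{-1}C) \leq \Ren_C(\mtx{D}^{-1})^2\cdot \delta(C)$, which produces the first assertion of the corollary. The matrix condition number bounds then follow by invoking inequality~\eqref{eq:mu_r(TC)<=...-kappa-1} of Theorem~\ref{thm:main-cond-bound} together with $\kappa(\mtx{D}^{-1}) = \kappa(\mtx{D})$; the two-sided bound $\sdim(C)/\kappa(\mtx{D})^2 \leq \sdim(\mtx{D}^{-1}C) \leq \kappa(\mtx{D})^2\,\sdim(C)$ rearranges directly into the stated sandwich for $\delta(f,\vct{x}_0)$.

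The one subtlety worth flagging is that the Renegar upper bound as written in the corollary omits the square that appears in Theorem~\ref{thm:main-cond-bound}; this looks like a typo that should be reconciled before finalizing the statement, since the derivation above naturally produces $\Ren_{\Desc(g,\mtx{D}\vct{x}_0)}(\mtx{D}^{-1})^2$ rather than $\Ren_{\Desc(g,\mtx{D}\vct{x}_0)}(\mtx{D}^{-1})$.
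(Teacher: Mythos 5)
Your proposal is correct and follows the same route as the paper's own proof: set $C=\Desc(g,\mtx{D}\vct{x}_0)$, use the identity $\Desc(f,\vct{x}_0)=\mtx{D}^{-1}\Desc(g,\mtx{D}\vct{x}_0)$ from~\eqref{eq:descent-dual}, apply Theorem~\ref{thm:main-cond-bound}, and note $\kappa(\mtx{D}^{-1})=\kappa(\mtx{D})$; your direct verification of the descent-cone identity is a harmless addition. Your observation that the stated Renegar bound is missing the square present in Theorem~\ref{thm:main-cond-bound} is also correct --- the derivation yields $\Ren_{\Desc(g,\mtx{D}\vct{x}_0)}(\mtx{D}^{-1})^2$, so the corollary as printed contains a typo.
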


\begin{proof}
Let $C=\Desc(g,\mtx{D}\vct{x}_0)$. Then from~\eqref{eq:descent-dual} we get that
\begin{equation*}
  \delta(f,\vct{x}_0) = \delta(\mtx{D}^{-1}C).
\end{equation*}
The claims then follows from Theorem~\ref{thm:main-cond-bound} and Proposition~\ref{prop:improved-cond}, noting that $\kappa(\mtx{D}^{-1})=\kappa(\mtx{D})$. 
\end{proof}

For convenience, we also recall the statement of Proposition~\ref{prop:l1analysis}. 

\begin{proposition}
Let $\mtx{D}\in \R^{p\times n}$, $p\geq n$, be such that all $n\times n$ minors of $\mtx{D}$ have full rank, and $\mtx{\Omega}\in \R^{m\times n}$ with $m\leq n$. Consider the problem
\begin{equation}\label{eq:l1min2}
  \minimize \norm{\mtx{D}\vct{x}}_1 \quad \subjto \mtx{\Omega}\vct{x}=\vct{b}.
\end{equation}
Let $\vct{x}_0\neq \zerovct$ be such that $\mtx{\Omega}\vct{x}_0=\vct{b}$, and such that $\vct{y}_0=\mtx{D}\vct{x}_0$ is $s$-sparse with support $I\subset [p]$. Let $\mtx{C}\in \R^{n\times p-s+1}$ be a matrix whose first $p-s$ columns consist of the columns of $\mtx{D}^T$ that are indexed by $I^{c}$, and the last column is $\vct{c}_{p-s+1}=\frac{1}{\sqrt{s}}\sum_{j\in I}\mathrm{sign}((\vct{y}_0)_j)\vct{d}_j$, where the vectors $\vct{d}_j$ denote the columns of $\mtx{D}^T$. 
Then
\begin{equation*}
  \delta(\norm{\mtx{D}\cdot}_1,\vct{x}_0) \leq \kappa(\mtx{C})^{-2} \cdot \delta(\norm{\cdot}_1,\mtx{D}\vct{x}_0) + \left(1-(p/n)\kappa(\mtx{C})^{-2}\right) \cdot n
\end{equation*}
In particular, given $\eta\in (0,1)$, Problem~\eqref{eq:l1min2} with Gaussian measurement matrix succeeds with probability $1-\eta$ if
\begin{equation*}
  m\geq \kappa(\vct{C})^{-2}\cdot \delta(\norm{\cdot}_1,\mtx{D}\vct{x}_0)+\left(1-(p/n)\kappa(\mtx{C})^{-2}\right)\cdot n+a_{\eta} \sqrt{n}, 
\end{equation*}
\end{proposition}

\begin{proof}
Set $f(\vct{x}) = \lone{\mtx{D}\vct{x}}$ with $\mtx{D}\in \R^{p\times n}$
and $p\geq n$.  Let $\vct{x}_0$ be given such that $\vct{y}_0=\mtx{D}\vct{x}_0$ is $s$-sparse with support $I$. Assuming that all the $n\times n$ minors of $\mtx{D}$ has rank $n$ and $\vct{x}_0\neq \zerovct$, $\vct{y}_0$ has at most $n-1$ zero entries, and the support therefore satisfies $s\geq p-n+1$. 
As shown in Section~\ref{sec:applications}, the descent cone $\Desc(f,\vct{x}_0)$ is polar to the subdifferential cone $\cone(\partial f(\vct{x}_0))$. Moreover, the statistical dimension satisfies $\delta(C)+\delta(C^{\circ})=n$, so that 
\begin{equation*}
 \delta(\Desc(f,\vct{x}_0)) = n-\delta(\cone(\partial f(\vct{x}_0))) = n-\delta(\mtx{D}^T\cone(\partial \|\vct{y}_0\|_1)).
\end{equation*}
The subdifferential of the $1$-norm is given by (see~\eqref{eq:l1subdif})
\begin{equation*}
\partial \lone{\vct{y}_0} =  \{ \vct{z}\in \R^p \mid z_i=\mathrm{sign } \ ((y_0)_i) \text{ if } i\in I, \ z_j\in [-1,1] \text{ if } j\not\in I \},
\end{equation*}
and we denote by $C:=\cone (\partial \lone{\vct{y}_0})$ the cone generated by this subdifferential.

\begin{figure}[htb!]
\centering
\includegraphics[width=0.5\textwidth]{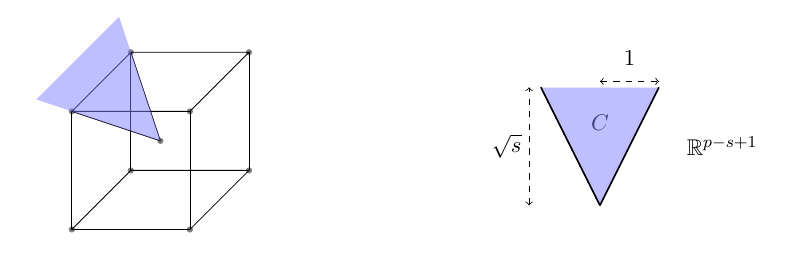}
\caption{Cone spanned by $(p-s)$-face of $d$-dimensional hypercube}\label{fig:hypercube}
\end{figure}

It follows that the cone generated by this subdifferential is contained in a subspace $L$ of dimension $\dim L=p-s+1\leq n$. An orthonormal basis of this subspace is given by the columns of a matrix $\mtx{B}=[\vct{b}_1,\dots,\vct{b}_{p-s+1}]$, where for $1\leq i\leq p-s$, the $\vct{b}_i$ are the unit vectors $\vct{e}_j$ for $j\in I^{c}$ and $\vct{b}_{p-s+1}=\frac{1}{\sqrt{s}}\sum_{j\in I} \mathrm{sign}((\vct{y}_0)_j) \vct{e}_j$. A moment's thought shows that $C=\mtx{B}\tilde{C}$, where $\tilde{C}\subset \R^{p-s+1}$ is the cone in $\R^{p-s+1}$ spanned by vectors of the form $\pm \vct{e}_i+\sqrt{s}\vct{e}_{p-s+1}$ for $1\leq i\leq n-p$ (see Figure~\ref{fig:hypercube}).
By the orthogonal invariance and the embedding invariance of the statistical dimension (see Properties (a) and (c) in Section~\ref{sec:stat-dim}), we get $\delta(C)=\delta(\tilde{C})$.
With this setup, we have
\begin{equation*}
 \mtx{D}^TC = \mtx{D}^T\mtx{B}\tilde{C} = \mtx{C}\tilde{C},
 \end{equation*}
 with the matrix $\mtx{C}:=\mtx{D}^{T}\mtx{B}\in \R^{n\times (p-s+1)}$ is
 then given as in the statement of the theorem. Applying the bounds from Theorem~\ref{thm:main-cond-bound} we thus get
 \begin{align*}
 \delta(\Desc(f,\vct{x}_0)) &= n-\delta(\mtx{D}^TC)\\
 & =  n-\delta(\mtx{C}\tilde{C})\\
 & \leq n-\kappa^{-2}(\mtx{C}) \delta(\tilde{C})\\
 &= n-\kappa^{-2}(\mtx{C}) \delta(C)\\
 & = \kappa(\mtx{C})^{-2} \cdot \delta(\norm{\cdot}_1,\mtx{D}\vct{x}_0) + \left(1-(p/n)\kappa(\mtx{C})^{-2}\right) \cdot n,
 \end{align*}
 as was to be shown. 
\end{proof}

\subsection{A note on the Stojnic method}
A popular method~\cite[Recipe 4.1]{edge}, going back to Stojnic~\cite{stojnic10} and generalized in~\cite{CRPW:12}, is to approximate the statistical dimension of the descent cone $\mathcal{D}(f,\vct{x}_0)$ by the expected value
\begin{equation}\label{eq:approx}
  \inf_{\tau\geq 0} \Expect[\dist^2(\vct{g},\tau\cdot \partial f(\vct{x}))].
\end{equation}
This approximation, however, does not work for all regularizers $f$ for two reasons: it my not be tight, and computing the quantity may not be feasible. 
In~\cite[Theorem 4.1]{edge}, the following error bound is derived:

\begin{equation}\label{eq:reg-bound}
  0 \leq \inf_{\tau\geq 0} \Expect[\dist^2(\vct{g},\tau\cdot \partial f(\vct{x}))] - \delta(f,\vct{x}_0) \leq \frac{2\sup\{\norm{\vct{s}}\colon \vct{s}\in \partial f(\vct{x})\}}{f(\vct{x}/\norm{\vct{x}})}.
\end{equation}

In~\cite{zhang2016precise}, the error~\eqref{eq:reg-bound} was analyzed in the case of TV minimization and it was shown to be bounded, so that the approximation is asymptotically tight. If $f(\vct{x})=\|\mtx{D}\vct{x}\|_1$ and assuming that $\vct{y}_0=\mtx{D}\vct{x}_0$ is $s$-sparse, we can express this bound in terms of the condition number of $\mtx{D}$. First note that the subdifferential of the $1$-norm is contained in the unit cube:
\begin{equation*}
  \partial \|\vct{y}_0\|_1 \subset \{\vct{z} \colon \|\vct{z}\|_\infty\leq 1\}.
\end{equation*}
Using the expression for the subdifferential of $g$ at $\vct{x}_0$, namely $\partial g(\vct{x}_0)=\mtx{D}^T\partial \|\vct{y}_0\|_1$, the error bound~\eqref{eq:reg-bound} translates to
\begin{equation*}
 \frac{2 \sup\{\|\vct{x}\|_2: \vct{x} \in \mtx{D}^T\partial \|\vct{y}_0\|_1\}}{\|\vct{y}_0\|_1/\|\vct{x}_0\|_2} \leq \frac{2}{\|\vct{y}_0\|_1/\|\vct{x}_0\|_2} \sup_{\|\vct{x}\|_{\infty}\leq 1} \|\mtx{D}^T\vct{x}\|.
\end{equation*}
Using the norm inequality $\|\vct{x}\|_2\leq \sqrt{n}\|\vct{x}\|_\infty$, we get the bound
\begin{equation*}
\sup_{\|\vct{x}\|_{\infty}\leq 1} \|\mtx{D}^T\vct{x}\|_2 \leq \sqrt{n}\sup_{\|\vct{x}\|_2 \leq 1} \|\mtx{D}^Tx\|_2 = \sqrt{n} \|\mtx{D}\|_2.
\end{equation*}
On the other hand, by the norm inequality
$\|\vct{x}\|_2\leq \|\vct{x}\|_1$ we have that
\begin{equation*}
  \frac{\|\vct{y}_0\|_1}{\|\vct{x}_0\|_2} =  \frac{\|\mtx{D} \vct{x}_0\|_1}{\|\vct{x}_0\|_2} \geq \frac{\|\vct{D} \vct{x}_0\|_2}{\|\vct{x}_0\|_2}\geq \sigma(\mtx{D}).
\end{equation*}
We therefore get the condition bound
\begin{equation*}
0 \leq \inf_{\tau\geq 0} \Expect[\dist^2(\vct{g},\tau\cdot \partial f(\vct{x}))] - \delta(f,\vct{x}_0) \leq \sqrt{n}\kappa(\mtx{D}).
\end{equation*}
From this we see that we can guarantee good bounds on the relative statistical dimension $\delta(f,\vct{x}_0)/n$ if the condition number of $\mtx{D}$ is small.
The bound can actually be improved when considering that we only need to maximize and minimize over certain subspaces in the definition of the singular values.

While this bound is not sharp (the derivation makes use of norm inequalities), it is enlightening as it gives sufficient conditions for the applicability of Bound~\eqref{eq:reg-bound} in terms of the condition number of $\mtx{A}$. It remains to be seen whether randomized preconditioning can be incorporated into this bound, and therefore whether this approach can lead to bounds that would rival those derived in~\cite{zhang2016precise}. 

\appendices

\section{The biconic feasibility problem - proofs}\label{sec:appendix-convex}

In this appendix we 
provide the proofs for Section~\ref{sec:gen-feas-prob}.
Recall that for $C\subseteq\IR^n$, $D\subseteq\IR^m$ closed convex cones, the biconic feasibility problem is given by
\\\def\tmpX{3mm}
\begin{minipage}{0.46\textwidth}
\begin{align}
   \exists \vct x & \in C\setminus\{\vct0\} \quad\text{s.t.} \hspace{\tmpX} \vct{Ax} \in D^\polar ,
\tag{P}
\label{eq:(P1)}
\end{align}
\end{minipage}
\rule{0.07\textwidth}{0mm}
\begin{minipage}{0.46\textwidth}
\begin{align}
   \exists \vct y & \in D\setminus\{\vct0\} \quad\text{s.t.} \hspace{\tmpX} -\vct A^T\vct y\in C^\polar ,
\tag{D}
\label{eq:(D1)}
\end{align}
\end{minipage}
\\[2mm] and the sets of primal feasible and dual feasible instances can be characterized by
\begin{align*}
   \P(C,D) & = \big\{\vct A\in\IR^{m\times n}\mid C\cap \big(\vct A^T D\big)^\polar\neq\{\vct0\}\big\} \\
   &= \{\vct A\in\IR^{m\times n}\mid \sres{A}{C}{D}=0 \} ,
\\ \D(C,D) & = \{\vct A\in\IR^{m\times n}\mid D\cap (-\vct A C)^\polar\neq\{\vct0\}\} \\
&=  \{\vct A\in\IR^{m\times n}\mid \srestm{A}{D}{C}=0 \} ,
\end{align*}
respectively, cf.~\eqref{eq:P(C,D)}/\eqref{eq:D(C,D)}. The proof of Proposition~\ref{prop:primaldual} uses the following generalization of Farkas' Lemma.

\begin{lemma}\label{lem:farkas-gen}
Let $C,\tilde C\subseteq\IR^n$ be closed convex cones with $\inter(C)\neq\emptyset$. Then
\begin{equation}\label{eq:Farkas-gen}
  \inter(C)\cap \tilde C=\emptyset \iff C^\polar\cap (-\tilde C^\polar)\neq\{\vct0\} .
\end{equation}
\end{lemma}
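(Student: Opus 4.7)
The plan is to handle the two directions by standard convex-analytic arguments: separation for the forward direction, and an interior-perturbation argument for the backward direction.

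For the forward implication, I would start from the assumption $\inter(C)\cap \tilde C=\emptyset$. Since $\inter(C)$ is a nonempty open convex set (by assumption) and $\tilde C$ is a nonempty convex set disjoint from it, the first separation theorem (Hahn--Banach) produces a nonzero vector $\vct z\in\IR^n$ and a scalar $\alpha$ such that $\langle \vct z,\vct x\rangle\leq\alpha\leq \langle \vct z,\vct y\rangle$ for all $\vct x\in\inter(C)$ and $\vct y\in\tilde C$. Because both $C$ and $\tilde C$ are cones containing the origin, one can take $\alpha=0$ by a standard rescaling argument (if some $\langle \vct z,\vct y\rangle<0$ held for $\vct y\in\tilde C$, scaling $\vct y$ by arbitrarily large positive constants would violate boundedness below). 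By continuity and $C=\closure{\inter(C)}$, the inequality $\langle \vct z,\vct x\rangle\leq0$ extends to all $\vct x\in C$, so $\vct z\in C^\polar$, and $-\vct z\in\tilde C^\polar$, giving a nonzero element of $C^\polar\cap(-\tilde C^\polar)$.

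For the backward implication, I would argue by contradiction. Suppose we had a nonzero $\vct z\in C^\polar\cap(-\tilde C^\polar)$ together with some $\vct x_0\in\inter(C)\cap\tilde C$. Since $\vct x_0\in\inter(C)$, for sufficiently small $\eps>0$ we have $\vct x_0+\eps\vct z\in C$, so $\langle \vct z,\vct x_0+\eps\vct z\rangle\leq0$, i.e.,
\begin{equation*}
  \langle \vct z,\vct x_0\rangle \leq -\eps\|\vct z\|^2 < 0.
\end{equation*}
On the other hand, $\vct x_0\in\tilde C$ and $\vct z\in -\tilde C^\polar$ give $\langle \vct z,\vct x_0\rangle\geq0$, the desired contradiction.

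The only delicate point I foresee is the justification that the separating scalar $\alpha$ can be taken to be zero; this is where the cone structure of both $C$ and $\tilde C$ (as opposed to the mere convexity used for separation) enters. Everything else is a routine manipulation of polar cones and openness of $\inter(C)$.
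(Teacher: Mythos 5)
Your proof is correct and follows essentially the same route as the paper: separation of $\inter(C)$ from $\tilde C$ (with the separating scalar pushed to zero via the cone structure) for the forward direction, and the strict inequality $\langle \vct z,\vct x_0\rangle<0$ forced by $\vct x_0\in\inter(C)$ and $\vct0\neq\vct z\in C^\polar$ for the converse. The only cosmetic difference is that the paper states the converse as a half-space assertion about $C^\polar\setminus\{\vct0\}$ rather than your explicit perturbation $\vct x_0+\eps\vct z$ (which also handles the degenerate case $\vct x_0=\vct 0$ uniformly), but the underlying idea is identical.
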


\begin{proof}
If $\inter(C)\cap \tilde C=\emptyset$, then there exists a separating hyperplane $H=\vct v^\bot$, $\vct v\neq\vct0$, so that $\langle \vct v,\vct x\rangle \leq 0$ for all $\vct x\in C$ and $\langle \vct v,\vct y\rangle \geq 0$ for all $\vct y\in \tilde C$. 
But this means $\vct v\in C^\polar\cap (-\tilde C^\polar)$.
On the other hand, if $\vct x\in \inter(C)\cap \tilde C$ then only in the case $C=\IR^n$, for which the claim is trivial, can $\vct x=\vct0$. If $\vct x\neq\vct0$, then~$C^\polar\setminus\{\vct0\}$ lies in the open half-space~$\{\vct v\mid \langle \vct v,\vct x\rangle<0\}$ and $-\tilde C^\polar$ lies in the closed half-space $\{\vct v\mid \langle \vct v,\vct x\rangle\geq0\}$, and thus $C^\polar\cap(-\tilde C^\polar)=\{\vct0\}$.
\end{proof}

For the proof of the third claim in Proposition~\ref{prop:primaldual} we also need the following well-known convex geometric lemma; a proof can be found, for example, in~\cite[proof of Thm.~6.5.6]{SW:08}. We say that two cones $C,D\subseteq\IR^n$, with $\inter(C)\neq\emptyset$, \emph{touch} if $C\cap D\neq\{\vct0\}$ but $\inter(C)\cap D=\emptyset$.

\begin{lemma}\label{lem:touch}
Let $C,D\subseteq\IR^n$ closed convex cones with $\inter(C)\neq\emptyset$. If $\vct Q\in O(n)$ uniformly at random, then the randomly rotated cone $\vct QD$ almost surely does not touch~$C$.
\end{lemma}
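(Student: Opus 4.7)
The plan is to show that the two events ``$C \cap \vct QD \neq \{\vct 0\}$'' and ``$\inter(C) \cap \vct QD \neq \emptyset$'' have equal probability, so that the touching event, being their difference, has probability zero. Both probabilities will be expressed as half-tail functionals of product cones via Corollary~\ref{cor:Croft}, and the required equality will reduce to a parity identity for intrinsic volumes.

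First, I would dispose of the degenerate cases: if $D = \{\vct 0\}$ there is no non-trivial intersection; if $D = \IR^n$ or $C = \IR^n$ then $\inter(C)$ always meets $\vct QD$ non-trivially; and if $C$ were a linear subspace then $\inter(C) = C$ would rule out touching. Under the hypothesis $\inter(C) \neq \emptyset$, these observations reduce matters to the case where $C$ is a proper full-dimensional convex cone (so neither $C$ nor $C^\polar$ is a subspace) and $\{\vct 0\} \neq D \neq \IR^n$. Direct application of Corollary~\ref{cor:Croft} then gives
\begin{equation*}
   \Prob\bigl\{C \cap \vct QD \neq \{\vct 0\}\bigr\} = h_{n+1}(C \times D).
\end{equation*}
For the interior event, Lemma~\ref{lem:farkas-gen} with $\tilde C = \vct QD$ asserts that $\inter(C) \cap \vct QD = \emptyset$ if and only if $C^\polar \cap (-\vct Q D^\polar) \neq \{\vct 0\}$. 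Since $-I \in O(n)$ and the Haar measure is invariant under $\vct Q \mapsto -\vct Q$, the distribution of $-\vct Q$ equals that of $\vct Q$, so a second application of Corollary~\ref{cor:Croft} yields
\begin{equation*}
   \Prob\bigl\{\inter(C) \cap \vct QD = \emptyset\bigr\} = h_{n+1}(C^\polar \times D^\polar).
\end{equation*}

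To finish, I need the identity $h_{n+1}(C \times D) + h_{n+1}(C^\polar \times D^\polar) = 1$. The polarity relation $v_k(E^\polar) = v_{N-k}(E)$ for $E \subseteq \IR^N$, combined with the product rule~\eqref{eq:productrule}, gives $v_k(C^\polar \times D^\polar) = v_{2n-k}(C \times D)$. After a reindexing, the two half-tails become $2 \sum_{k \geq n+1,\, k \equiv n+1 \pmod{2}} v_k(C \times D)$ and $2 \sum_{k \leq n-1,\, k \equiv n+1 \pmod{2}} v_k(C \times D)$. Since $n \not\equiv n+1 \pmod{2}$, these partition the index set $\{k : k \equiv n+1 \pmod{2}\}$, and the total mass over this set is $1/2$ by Gauss-Bonnet~\eqref{eq:gaussbonnet} combined with $\sum_k v_k(C \times D) = 1$ (applicable as $C \times D$ is not a subspace, since $C$ is not). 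This yields the identity, the two probabilities above coincide, and the touching event therefore has measure zero. The only mildly delicate point is the parity bookkeeping in this last step; the rest is an assembly of formulas already established in the paper.
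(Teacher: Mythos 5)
Your argument is correct, and it is genuinely different from what the paper does: the paper offers no proof of Lemma~\ref{lem:touch} at all, but cites the direct geometric argument in Schneider--Weil (a separating-hyperplane/Fubini argument). You instead derive the statement from the integral-geometric toolkit already set up in the paper. The skeleton is sound: for $C\neq\IR^n$ the event $\{\inter(C)\cap\vct QD\neq\emptyset\}$ is contained in $\{C\cap\vct QD\neq\{\zerovct\}\}$ (since $\zerovct\notin\inter(C)$), so the touching probability equals the difference of the two probabilities; Lemma~\ref{lem:farkas-gen} converts the interior event into the polar intersection event, the sign is absorbed by Haar invariance under $\vct Q\mapsto-\vct Q$, and the identity $h_{n+1}(C\times D)+h_{n+1}(C^\polar\times D^\polar)=2\sum_{k\equiv n+1\ (\mathrm{mod}\ 2)}v_k(C\times D)=1$ follows from polarity, the product rule, and Gauss--Bonnet~\eqref{eq:gaussbonnet}, using that $C\times D$ is not a subspace because $C$ is not. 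The parity bookkeeping is right: the index $n$ has the opposite parity, so the two half-tails exactly partition the parity class of $n+1$, whose total intrinsic-volume mass is $1/2$. What your route buys is a self-contained proof inside the paper's framework and an exact expression for the touching probability as a deficit in a Crofton-type identity; the classical proof is more elementary but external to the paper.

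The one point you need to address is the scope of Corollary~\ref{cor:Croft}: it is stated for \emph{polyhedral} cones, whereas Lemma~\ref{lem:touch} concerns general closed convex cones and is used in that generality in Proposition~\ref{prop:primaldual}. This is not merely pedantic, because in the standard literature the kinematic/Crofton formulas for general convex bodies are themselves proved \emph{using} the touching lemma, so an unexamined appeal here risks circularity. For the probability (Grassmann-angle) version you actually use, the extension can be done without circularity: take polyhedral cones $C_k\downarrow C$ and $D_k\downarrow D$ (finite intersections of supporting halfspaces, each with nonempty interior and not a subspace), observe by a compactness argument on the unit sphere that $\{\vct Q: C_k\cap\vct QD_k\neq\{\zerovct\}\}$ decreases to $\{\vct Q: C\cap\vct QD\neq\{\zerovct\}\}$, and combine continuity from above of the Haar measure with continuity of the intrinsic volumes under conic Hausdorff convergence. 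You should either include this approximation step explicitly or restrict your proof to polyhedral cones, for which it is complete as written.
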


\begin{proof}[Proof of Proposition~\ref{prop:primaldual}]
(1) The sets $\P(C,D)$ and $\D(C,D)$ are closed as they are preimages of the closed set $\{0\}$ under continuous functions,
c.f.~\eqref{eq:P(C,D)}/\eqref{eq:D(C,D)}. Indeed, for any $\vct{x}$, the function $\mtx{A}\mapsto \norm{\Proj_D(\mtx{A}\vct{x})}$ is continuous, and as a minimum of such functions over the compact set $C\cap S^{m-1}$, it
follows that $\sigma_{C\to D}(\mtx{A})$ is continuous. Hence, $\P(C,D)=\{\vct A\in\IR^{n\times m}\mid \sres{A}{C}{D}=0 \}$ is closed.
The same argument applies to $\D(C,D)$.

(2) For the claim about the union of the sets $\P(C,D)$ and $\D(C,D)$ we first consider the case $C\neq\IR^n$, so that $\vct 0\not\in\inter(C)$. Using the generalized Farkas' Lemma~\ref{lem:farkas-gen}, we obtain
\begin{align*}
   \vct A\not\in\P(C,D) & \iff C\cap \big(\vct A^T D\big)^\polar = \{\vct0\} \;\\
   &\Rightarrow\;  \inter(C)\cap \big(\vct A^T D\big)^\polar = \emptyset\\
   & \stackrel{\eqref{eq:Farkas-gen}}{\Longrightarrow} C^\polar\cap (-\vct A^T D)\neq\{\vct0\} \;\Rightarrow\; \vct A\in\D(C,D) .
\end{align*}
This shows $\P(C,D)\cup\D(C,D)=\IR^{n\times m}$. For $D\neq\IR^n$ the argument is the same.
For $C=\IR^n$ and $D=\IR^m$:
\begin{align*}
   \P(\IR^n,\IR^m) & = \big\{\vct A\in\IR^{m\times n}\mid \ker\vct A\neq\{\vct0\}\big\} \\
   &= \begin{cases} \{\text{rank deficient matrices}\} & \text{if } n\leq m \\ \IR^{m\times n} & \text{if } n>m , \end{cases}
\\ \D(\IR^n,\IR^m) & = \big\{\vct A\in\IR^{m\times n}\mid \ker\vct A^T\neq\{\vct0\}\big\} \\
&= \begin{cases} \IR^{m\times n} & \text{if } n<m \\ \{\text{rank deficient matrices}\} & \text{if } n\geq m . \end{cases}
\end{align*}
In particular,this shows $\P(\IR^n,\IR^n)\cup \D(\IR^n,\IR^n)=\{\text{rank deficient matrices}\}$.

(3) If $(C,D)=(\IR^n,\IR^m)$ then by the characterization above $\Sigma(\IR^n,\IR^m)$ consists of the rank deficient matrices, which is a nonempty set. If $(C,D)\neq(\IR^n,\IR^n)$, then the union of the closed sets $\P(C,D)$ and $\D(C,D)$ equals~$\IR^{m\times n}$, which is an irreducible topological space, so that their intersection $\Sigma(C,D)=\P(C,D)\cap\D(C,D)$ must be nonempty.

As for the claim about the Lebesgue measure of $\Sigma(C,D)$, we may use the symmetry between~\eqref{eq:(P1)} and~\eqref{eq:(D1)} to assume without loss of generality $m\leq n$. If $\vct A\in\IR^{m\times n}$ has full rank, then $\vct AC$ has nonempty interior and from Proposition~\ref{prop:nres=0,sres=0} and Farkas' Lemma,
\begin{align*}
   \sres{A}{C}{D}=0 & \iff C\cap (\vct A^TD)^\polar\neq\{\vct0\}\\
   & \iff \vct AC\cap D^\polar\neq\{\vct0\} \;\text{or}\; \ker\vct A\cap C\neq\{\vct0\} ,
\\ \srestm{A}{D}{C}=0 & \iff D\cap (-\vct AC)^\polar\neq\{\vct0\}\\
& \stackrel{\eqref{eq:Farkas-gen}}{\iff} D^\polar\cap \inter(\vct AC)=\emptyset .
\end{align*}
Note that if $\vct{Ax}=\vct0$ for some $\vct x\in \inter(C)$, then~$\vct A$, being a continuous surjection, maps an open neighborhood of~$\vct x$ to an open neighborhood of the origin, so that~$\vct AC=\IR^m$. Hence, $D\cap (-\vct AC)^\polar\neq\{\vct0\}$ implies $\ker\vct A\cap \inter(C)=\emptyset$, since otherwise $\vct AC=\IR^m$, i.e., $(\vct AC)^\polar=\{\vct0\}$.

If $\vct A\in\Sigma(C,D)$, i.e., $\sres{A}{C}{D}=\srestm{A}{D}{C}=0$, and if $\vct A$ has full rank, then $\vct AC\cap D^\polar\neq\{\vct0\}$ implies that~$D^\polar$ touches~$\vct AC$, while $\ker\vct A\cap C\neq\{\vct0\}$ implies that~$\ker \vct A$ touches~$C$. Hence, if $\vct A=\vct G$ Gaussian, then $\vct G$ has almost surely full rank, and Lemma~\ref{lem:touch} implies that both touching events have zero probability, so that almost surely $\vct G\not\in\Sigma(C,D)$.
\end{proof}

We next provide the proof for the characterization of the restricted singular values as distances to the primal and dual feasible sets. From now on we use again the short-hand notation $\P:=\P(C,D)$ and $\D:=\D(C,D)$.

\begin{proof}[Proof of Proposition~\ref{prop:sing-dist}]
By symmetry, it suffices to show that $\dist(\vct A,\P)=\sres{A}{C}{D}$. If $\vct A\in\P$ then $\dist(\vct A,\P)=0=\sres{A}{C}{D}$, so assume that $\vct A\not\in\P$. Let $\DA\in\IR^{m\times n}$ such that $\vct A+\DA\in\P$ and $\dist(\vct A,\P)=\|\DA\|$. 
Since $\vct A+\DA\in\P$, there exists
$\vct x_0\in C\cap S^{n-1}$ such that $\vct w_0:=(\vct A+\DA)\vct x_0\in 
D^\polar$. For all $\vct y\in D$
  \[ 0\geq \langle\vct w_0,\vct y\rangle = \langle (\vct A+\DA)\vct x_0,\vct y\rangle = \langle \vct{Ax}_0,\vct y\rangle - \langle -\DA\vct x_0,\vct y\rangle . \]
If $\vct y_0\in B^m\cap D$ is such that $\|\Pi_D(\vct{Ax}_0)\|=\langle \vct{Ax}_0,\vct y_0\rangle$, then
\begin{align*}
   \dist(\vct A,\P) & = \|\DA\| \geq \|\DA\vct x_0\| 
   \geq \|\Pi_D(-\DA\vct x_0)\| \\
   &= \max_{\vct y\in B^m\cap D}\langle -\DA\vct x_0,\vct y\rangle
\\ & \geq \langle -\DA\vct x_0,\vct y_0\rangle \geq \langle \vct{Ax}_0,\vct y_0\rangle = \|\Pi_D(\vct{Ax}_0)\|\\
& \geq \min_{\vct x\in C\cap S^{n-1}} \|\Pi_D(\vct{Ax})\| = \sres{A}{C}{D} .
\end{align*}

For the reverse inequality $\dist(\vct A,\P)\leq \sres{A}{C}{D}$ we need to construct a perturbation~$\DA$ such that $\vct A+\DA\in\P$ and $\|\DA\|\leq\sres{A}{C}{D}$. Let $\vct x_0\in C\cap S^{n-1}$ and $\vct y_0\in D\cap B^m$ such that
  \[ \sres{A}{C}{D} = \min_{\vct x\in C\cap S^{n-1}} \max_{\vct y\in D\cap B^m} \langle \vct{Ax},\vct y\rangle = \langle \vct{Ax}_0,\vct y_0\rangle . \]
Since $\vct A\not\in\P$ we have $\sres{A}{C}{D}>0$, which implies $\|\vct y_0\|=1$, i.e., $\vct y_0\in D\cap S^{m-1}$. We define
  \[ \DA := -\vct y_0\vct y_0^T\vct A . \]
Note that
  \[ \|\DA\| = \|\vct A^T\vct y_0\| \leq \langle \vct A^T\vct y_0,\vct x_0\rangle = \sres{A}{C}{D} . \]
Furthermore,
\begin{align*}
   (\vct A+\DA)\vct x_0 & = \vct{Ax}_0 - \vct y_0\vct y_0^T\vct{Ax}_0 \\
   &= \vct{Ax}_0 - \langle \vct{Ax}_0,\vct y_0\rangle\vct y_0 \\
   &= \vct{Ax}_0 - \Pi_D(\vct{Ax}_0) = \Pi_{D^\polar}(\vct{Ax}_0) .
\end{align*}
So $\vct x_0\in C\setminus\{\vct0\}$ and $(\vct A+\DA)\vct x_0\in D^\polar$, which shows that $\vct A+\DA\in\P$, and hence $\dist(\vct A,\P)\leq \|\DA\| \leq \sres{A}{C}{D}$.
\end{proof}

\section*{Acknowledgment}
The authors would like to thank Mike~McCoy and Joel Tropp for fruitful discussions on integral geometry, and in particular for suggesting the TQC Lemma, and Armin Eftekhari for helpful discussions on random projections. I would also like to thank the anonymous referees for valuable feedback and suggestions.

\ifCLASSOPTIONcaptionsoff
  \newpage
\fi



%

\def\cprime{$'$} \def\cprime{$'$}

%

\begin{IEEEbiographynophoto}{Dennis Amelunxen}
was Assistant Professor at the Mathematics Department of City University of Hong Kong. Dennis Amelunxen received his PhD from the University of Paderborn, Germany, in 2011. Before joining City University in 2014, he worked as a postdoctoral fellow at Cornell University, USA, and at The University of Manchester, UK.
\end{IEEEbiographynophoto}
\begin{IEEEbiographynophoto}{Martin Lotz}
is Associate Professor of Mathematics at the University of Warwick. 
Prior to joining Warwick, Martin Lotz was a Lecturer in Numerical Analysis at the University of Manchester and held research positions at the City University of Hong Kong, at the University of Oxford, and at the University of Edinburgh, supported by a Leverhulme Trust and Seggie Brown Fellowship. Martin Lotz received his undergraduate degree from the ETH Z\"urich, and his PhD at the University of Paderborn, with a thesis on Algebraic Complexity Theory.
\end{IEEEbiographynophoto}
\begin{IEEEbiographynophoto}{Jake Walvin}
completed his PhD in Mathematics at the University of Manchester in 2019.
\end{IEEEbiographynophoto}




\end{document}